\documentclass[11pt]{article}
\usepackage[margin=2.5cm]{geometry}
\usepackage{pdflscape}
\usepackage{amsfonts}
\usepackage{amsmath}
\usepackage{amsthm}
\usepackage{amssymb}
\usepackage{amsmath,amscd}
\usepackage{graphicx}
\usepackage{enumerate}
\usepackage{subcaption}
\graphicspath{ {./Figure/} }
\usepackage[pdffitwindow=false,
colorlinks=true,linkcolor=black,urlcolor=black,citecolor=black]{hyperref}


\newtheorem{theorem}{Theorem}[section]
\newtheorem{lemma}[theorem]{Lemma}
\theoremstyle{definition}
\newtheorem{definition}[theorem]{Definition}

\newtheorem{proposition}[theorem]{Proposition}

\newtheorem{remark}[theorem]{Remark}
\numberwithin{equation}{section}


\newcommand{\id}{\mathrm{Id}}

\newcommand{\spin}{\mathrm{Spin}}

\newcommand{\trace}{\mathrm{tr}}

\newcommand{\hmo}{\mathrm{Hom}}

\newcommand{\ricci}{\mathrm{Ric}}

\begin{document}
\title {Einstein metrics of cohomogeneity one with $\mathbb{S}^{4m+3}$ as principal orbit}
\author{Hanci Chi}
\date{\today}
\maketitle

\abstract
In this article, we construct non-compact complete Einstein metrics on two infinite series of manifolds. The first series of manifolds are vector bundles with $\mathbb{S}^{4m+3}$ as principal orbit and $\mathbb{HP}^{m}$ as singular orbit. The second series of manifolds are $\mathbb{R}^{4m+4}$ with the same principal orbit. For each case, a continuous 1-parameter family of complete Ricci-flat metrics and a continuous 2-parameter family of complete negative Einstein metrics are constructed. In particular, $\spin(7)$ metrics $\mathbb{A}_8$ and $\mathbb{B}_8$ discovered by Cveti\v{c} et al. in 2004 are recovered in the Ricci-flat family. A Ricci flat metric with conical singularity is also constructed on $\mathbb{R}^{4m+4}$. Asymptotic limits of all Einstein metrics constructed are studied. Most of the Ricci-flat metrics are asymptotically locally conical (ALC). Asymptotically conical (AC) metrics are found on the boundary of the Ricci-flat family. All the negative Einstein metrics constructed are asymptotically hyperbolic (AH).

\tableofcontents
\section{Introduction}
A Riemannian manifold $(M, g)$ is \emph{Einstein} if its Ricci curvature satisfies $\ricci(g)=\Lambda g$ for some constant $\Lambda$. A Riemannian manifold $(M,g)$ is of cohomogeneity one if a Lie Group $G$ acts isometrically on $M$ with principal orbit $G/K$ of codimension one. The Einstein equations of a cohomogeneity one manifold is reduced to a dynamic system.

In this article we focus on constructing non-compact cohomogeneity one Einstein metrics. Known examples include the first inhomogeneous Einstein metric in \cite{calabi_construction_1975}, which has K\"ahler holonomy. More non-compact K\"ahler--Einstein metrics of cohomogeneity one were constructed in \cite{berard-bergery_sur_1982}\cite{dancer_kahler-einstein_1998}\cite{wang_einstein_1998}\cite{dancer_einstein_2002}. Non-compact cohomogeneity one $G_2$ and $\spin(7)$ metrics, which are motivations to this article, were constructed in \cite{bryant_construction_1989}\cite{gibbons_einstein_1990}\cite{cvetic_new_2004}\cite{foscolo_infinitely_2018}. Fixing the principal orbit $G/K=Sp(m+1)U(1)/Sp(m)\Delta U(1)$, we aim to look into the full dynamic system of cohomogeneity one Einstein metrics without imposing any special holonomy condition. Odd dimensional cohomogeneity one Einstein metrics with generic holonomy include those constructed in \cite{berard-bergery_sur_1982}\cite{wang_einstein_1998}\cite{chen_examples_2011}. The case where the isotropy representation of the principal orbit consists of exactly two inequivalent irreducible summands was studied in \cite{bohm_non-compact_1999}\cite{wink_cohomogeneity_2017}. Examples where the principal orbit is a product of irreducible homogeneous spaces was constructed in \cite{bohm_non-compact_1999}. In \cite{chi_invariant_2019}, Ricci-flat metrics with Wallach spaces as principal orbits were constructed. The isotropy representation of Wallach spaces consists of three inequivalent irreducible summands, two of which are from the singular orbit, allowing the singular orbit to be squashed. In this article, the principal orbit also consists of three irreducible summands. Our main results are the following.
\begin{theorem}
\label{zeta curve}
Let $M$ be the $\mathbb{R}^4$-bundle over $\mathbb{HP}^m$ given by the group triple $(G,H,K)=(Sp(m+1)U(1),Sp(m)Sp(1)U(1),Sp(m)\Delta U(1))$.
There exists a continuous $2$-parameter family of smooth Einstein metrics $\{\zeta_{(s_1,s_2,s_3)}\mid (s_1,s_2,s_3)\in\mathbb{S}^2, s_1>0,s_2,s_3\geq 0\}$ of cohomogeneity one  on $M$. Specifically,
\begin{enumerate}
\item
$\zeta_{(s_1,s_2,0)}$ is a continuous 1-parameter family of complete Ricci-flat metrics on $M$. A metric in this family is AC if $s_2=0$, it is ALC otherwise.  For $m=1$, each $\zeta_{(s_1,s_2,0)}$ has holonomy $\spin(7)$ on $M^8$. For $m>1$, each $\zeta_{(s_1,s_2,0)}$ with $s_2>0$ has generic holonomy.
\item 
$\zeta_{(s_1,s_2,s_3)}$ with $s_3>0$ is a continuous 2-parameter family of complete AH negative Einstein metrics on $M$.
\end{enumerate}
\end{theorem}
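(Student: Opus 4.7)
The plan is to reduce the Einstein condition on $M$ to a dynamical system along the cohomogeneity direction $t\in[0,\infty)$, and then perform a phase-space analysis in the spirit of \cite{bohm_non-compact_1999}, \cite{wink_cohomogeneity_2017}, and \cite{chi_invariant_2019}. I would write a $G$-invariant metric on $(0,\infty)\times G/K$ as
\[
g=dt^{2}+f_1^{2}(t)\,g_1+f_2^{2}(t)\,g_2+f_3^{2}(t)\,g_3,
\]
where $g_1,g_2,g_3$ are fixed background inner products on the three irreducible isotropy summands of $\mathbb{S}^{4m+3}=G/K$: the $\mathbb{HP}^m$-base direction of real dimension $4m$, the $Sp(1)/U(1)\cong\mathbb{S}^2$ fibre direction of dimension $2$, and the $U(1)$ direction of dimension $1$. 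The Ricci tensor of this ansatz reduces the Einstein condition $\ricci=\Lambda g$ to a second-order ODE system in $(f_1,f_2,f_3)$ together with a first-order Hamiltonian constraint; normalizing $|\Lambda|\in\{0,1\}$ produces an $\mathbb{S}^2$-parametrization in which $s_3$ records the sign of $\Lambda$ and $(s_1,s_2)$ record the remaining initial degrees of freedom.

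For smoothness at the singular orbit I would use that $H/K\cong Sp(1)=\mathbb{S}^3$, so that as $t\to 0^{+}$ the Hopf-fibred $\mathbb{S}^3$ fibre normal to $\mathbb{HP}^m$ in $M$ collapses to a point. This forces $f_2(0)=f_3(0)=0$ with matching first derivatives $f_2'(0)=f_3'(0)$ and suitable parity on the higher Taylor coefficients, while $f_1(0)>0$ is a free positive scale; the asymmetry between $f_2$ and $f_3$ first appears at second order. A standard cohomogeneity one smoothness analysis together with the inverse function theorem then produces a two-parameter family of local smooth solutions emanating from $\mathbb{HP}^m$, parametrized by $f_1(0)$ and by the second-order asymmetry, with the sign and normalization of $\Lambda$ providing the remaining $s_3$-direction; this matches the parameter count $\{(s_1,s_2,s_3)\in\mathbb{S}^2:s_1>0,\ s_2,s_3\geq 0\}$.

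The core of the argument is long-time existence and asymptotic identification. Introducing scale-invariant coordinates such as $X_i=\dot f_i/f_i$ and $Y_i=1/f_i$ compactifies the flow onto an invariant algebraic set whose critical points at infinity can be enumerated: the Ricci-flat Riemannian cone over the Einstein sphere $\mathbb{S}^{4m+3}$ (the AC model, expected to be attained only at $s_2=s_3=0$); a Taub--NUT-type circle-bundle ALC model (attained for $s_3=0$, $s_2>0$); and a hyperbolic attractor yielding AH ends for all $s_3>0$. Using the Hamiltonian constraint together with monotonicity of suitable scale-invariant quantities, I would show that every trajectory in the two-parameter smooth family extends to $t=\infty$ without hitting the walls $f_i=0$ and that its $\omega$-limit is the appropriate critical point above, recovering the AC/ALC dichotomy on the Ricci-flat boundary $s_3=0$. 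For the holonomy statement with $m=1$, the first-order $\spin(7)$ equations reduce to an invariant subsystem whose smooth initial data already sweep out the full one-parameter Ricci-flat family, so $\zeta_{(s_1,s_2,0)}$ coincides with the metrics $\mathbb{A}_8$ and $\mathbb{B}_8$ of \cite{cvetic_new_2004}; for $m>1$ a dimension count shows that the analogous reduced holonomy system cannot support a one-parameter family of smooth solutions, so the generic $\zeta_{(s_1,s_2,0)}$ with $s_2>0$ must have holonomy $SO(4m+4)$.

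The principal obstacle I anticipate is combining long-time existence with the precise identification of the asymptotic type. This requires Lyapunov-type monotone quantities controlling each trajectory throughout $t\in(0,\infty)$ and excluding trajectories that collide with the singular strata of the compactified phase space; for a three-summand principal orbit the phase space is genuinely higher-dimensional than in the two-summand cases of \cite{bohm_non-compact_1999} and \cite{wink_cohomogeneity_2017}, so the proof will likely have to exploit specific algebraic features of the $Sp(m+1)U(1)$ isotropy representation together with a careful hyperbolicity analysis at each critical point at infinity.
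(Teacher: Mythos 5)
Your high-level strategy is the same as the paper's: reduce the Einstein condition to an ODE system in three scale functions, change to scale-invariant coordinates, analyze critical points of the induced flow, trap trajectories in an invariant region, and identify asymptotic types with critical points at the ends. However there are several gaps where the plan as written would not go through.

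First, the coordinate change you propose, $X_i=\dot f_i/f_i$ and $Y_i=1/f_i$, does not compactify the relevant dynamics: the points you want to analyze would remain at infinity of phase space. The paper instead normalizes by the mean curvature $L=\dot a/a+2\dot b/b+4m\,\dot c/c$, setting $X_i=(\dot f_i/f_i)/L$ and the $Y$'s to products of $1/f_i$ with $1/L$, and changes the time variable to $d\eta=L\,dt$. This Dancer--Wang projectivization is essential: it makes both the singular-orbit initial conditions and the AC/ALC/AH asymptotic models into genuine critical points of a \emph{polynomial} vector field on a bounded algebraic set, so that the linearization/Hartman--Grobman analysis you invoke actually applies. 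Relatedly, the free asymmetry parameter between the two collapsing functions appears at \emph{third} order (not second), and their first derivatives at $t=0$ are $1$ and $\sqrt{2}/2$ (not equal), because $H/K\cong\mathbb{S}^3$ decomposes into a $1$-dimensional and a $2$-dimensional piece with different normalizations of the background metric.

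Second, and most importantly, the step you flag as ``the principal obstacle'' --- producing monotone quantities that keep all trajectories defined for all $\eta$ and away from bad strata --- is the actual content of the proof, and the proposal does not supply it. The paper constructs an explicit compact flow-invariant set $\mathcal{S}$ cut out by the inequalities $X_1\le X_2$, $Y_1^2\le 2$, $X_3\ge 0$, $X_2\le 1/2$, $2Y_2-Y_3\ge 0$ and $\frac{\sqrt 2}{2}(2Y_2-Y_3)+X_3-X_2\ge 0$, verifying each by direct computation (with one genuinely delicate estimate, Proposition~4.3). These inequalities are very specific to the $Sp(m+1)U(1)$-isotropy representation of $\mathbb{S}^{4m+3}$; appealing to unnamed ``Lyapunov-type monotone quantities'' is not a proof. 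In the three-summand setting the standard B\"ohm functional fails to converge (it blows up, as noted in Remark~5.5), so the two-summand toolbox you cite does not transfer without this explicit construction.

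Third, your holonomy argument for $m>1$ is not the paper's, and as stated it is both vague and probably unworkable. You argue that ``the analogous reduced holonomy system cannot support a one-parameter family of smooth solutions,'' which is neither clearly true nor what is needed (the statement must exclude \emph{any} special-holonomy reduction, not just a one-parameter family of $\spin(7)$-type solutions, and $\spin(7)$ is not even the relevant group outside dimension $8$). The paper instead passes to the asymptotic limit: all $\zeta_{(s_1,s_2,0)}$ with $s_2>0$ converge to $P_{\mathrm{ALC}-2}$, whose limit space is an $\mathbb{S}^1$-bundle over a cone on a non-symmetric squashed $\mathbb{CP}^{2m+1}$ and hence has holonomy group containing $SO(4m+3)$; a parallel complex structure on $\zeta$ would pass to the limit, forcing the limit holonomy into $SU(2m+2)$, which has strictly smaller dimension --- a contradiction. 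This asymptotic-limit-space argument is the mechanism the paper actually uses, and it is qualitatively different from (and more robust than) the dimension count you sketch.
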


Some known Einstein metrics are recovered in this family. In the case where $m=1$, $\zeta_{(1,0,0)}$ is the $\spin(7)$ metric in \cite{bryant_construction_1989}\cite{gibbons_einstein_1990}. The 1-parameter family of $\spin(7)$ metrics $\zeta_{(s_1,s_2,0)}$ was constructed in \cite{cvetic_new_2004}. For all $m\geq 1$, metrics $\zeta_{(s_1,0,s_3)}$ are of two summands type. They were constructed in \cite{bohm_non-compact_1999}\cite{wink_cohomogeneity_2017}. All the other metrics in $\zeta_{(s_1,s_2,s_3)}$ are new to the author.

On $\mathbb{R}^{4m+4}$, we have the following.
\begin{theorem}
\label{gamma curve}
There exists a continuous $2$-parameter family of smooth Einstein metrics $\{\gamma_{(s_1,s_2,s_3)}\mid (s_1,s_2,s_3)\in\mathbb{S}^2, s_1,s_2,s_3\geq 0 \}$ of cohomogeneity one  on $\mathbb{R}^{4m+4}$. Specifically,
\begin{enumerate}
\item
$\gamma_{(s_1,s_2,0)}$ is a continuous 1-parameter family of complete Ricci-flat metric on $\mathbb{R}^{4m+4}$. A metric in this family is AC if $s_2=0$, it is ALC otherwise. For $m=1$, $\gamma_{\left(\frac{1}{\sqrt{5}},\frac{2}{\sqrt{5}},0\right)}$ is $\spin(7)$ on $\mathbb{R}^8$ and all the other Ricci-flat metrics have generic holonomy. For $m>1$, each $\gamma_{(s_1,s_2,0)}$ with $s_2>0$ has generic holonomy.
\item 
$\gamma_{(s_1,s_2,s_3)}$ with $s_3>0$ is a continuous 2-parameter family of complete AH negative Einstein metric on $\mathbb{R}^{4m+4}$. In particular, $\gamma_{(0,0,1)}$ is the hyperbolic cone with base the standard $\mathbb{S}^{4m+3}$.
\end{enumerate}
\end{theorem}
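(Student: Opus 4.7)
The plan is to reduce the existence problem to the analysis of an autonomous dynamical system, in the spirit of the approach used in \cite{bohm_non-compact_1999} and \cite{wink_cohomogeneity_2017}. Writing the invariant metric away from the origin as
\begin{equation*}
g = dt^2 + f_1(t)^2 g_1 + f_2(t)^2 g_2 + f_3(t)^2 g_3
\end{equation*}
with $g_1, g_2, g_3$ background metrics on the three inequivalent isotropy summands of $Sp(m+1)U(1)/Sp(m)\Delta U(1) = \mathbb{S}^{4m+3}$ (of dimensions $4m$, $2$ and $1$), the equation $\ricci(g)=\Lambda g$ becomes a coupled system of second-order ODEs in $(f_1,f_2,f_3)$ together with a first-order Hamiltonian constraint. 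Introducing logarithmic derivatives $X_i=\dot{f}_i/f_i$, a suitable rescaling of the radial coordinate, and normalizing the Einstein constant $\Lambda\in\{0,-1\}$, one converts the system into an autonomous polynomial flow on a compact region of phase space, on which $\{\Lambda=0\}$ is an invariant hypersurface.

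First I would analyze the smoothness condition at the origin of $\mathbb{R}^{4m+4}$. Since the singular orbit is a point, all three $f_i$ must vanish to first order at $t=0$ with leading coefficients compatible with the flat round metric on $\mathbb{R}^{4m+4}$. In the autonomous system this collapsing condition corresponds to a distinguished critical point $P_0$, and smooth Einstein metrics emanating from the origin are in bijection with trajectories on the unstable manifold of $P_0$. A direct linearization computation at $P_0$ should show that, after quotienting by the residual time-translation and overall homothety, this unstable manifold is parametrized by the spherical triangle $\{(s_1,s_2,s_3)\in\mathbb{S}^2 \mid s_1,s_2,s_3\geq 0\}$, with $s_3$ encoding the Einstein constant (so $s_3=0$ is the Ricci-flat locus and $s_3>0$ the normalized negative Einstein locus).

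Next I would establish long-time existence, positivity of $f_1,f_2,f_3$ on $(0,\infty)$, and determine the asymptotics. This is the main obstacle. The standard technique is to construct invariant regions in phase space together with monotone Lyapunov-type quantities that force each trajectory to flow to a critical point at infinity. For the Ricci-flat subfamily $s_3=0$, I would show that there is a unique AC critical point at infinity whose stable manifold inside $\{\Lambda=0\}$ is one-dimensional, cut out exactly by $s_2=0$; every other Ricci-flat trajectory is trapped in the basin of a distinct critical point at infinity on which one logarithmic derivative stays bounded, producing a circle fiber of finite asymptotic length and hence ALC asymptotics. For $s_3>0$ I would show that every trajectory converges to the hyperbolic-cone critical point, where the $X_i$ tend to a common nonzero limit, giving AH ends.

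Finally, the distinguished solutions are identified within the family. The hyperbolic cone $\gamma_{(0,0,1)}$ arises from the explicit ansatz $f_i(t)=c_i\sinh(t)$ with all $c_i$ equal to the round radius, which solves the system exactly. For $m=1$, the torsion-free $\spin(7)$ condition as in \cite{cvetic_new_2004} cuts out an invariant submanifold of the Ricci-flat phase space; intersecting this submanifold with the unstable manifold of $P_0$ yields a single trajectory, and matching its leading asymptotics at $P_0$ pins down the parameter to $(s_1,s_2)=(1/\sqrt{5},2/\sqrt{5})$. For $m>1$ and $s_2>0$ the corresponding trajectory leaves every proper holonomy-reduction invariant subset, so these metrics have generic holonomy.
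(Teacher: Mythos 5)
Your outline follows essentially the same strategy the paper uses: compactify the cohomogeneity one Einstein ODEs into an autonomous polynomial flow (the paper divides by the mean curvature $\frac{\dot a}{a}+2\frac{\dot b}{b}+4m\frac{\dot c}{c}$, following Dancer--Wang), identify the collapsing initial condition with a critical point ($P_{AC-1}$ in the paper's notation), parametrize the unstable manifold by $\mathbb{S}^2$, prove long-time existence by trapping trajectories in a compact flow-invariant set, and read off asymptotics from the $\omega$-limit sets. So the architecture is right. Two substantive points where your sketch is thinner than, or diverges from, what is actually needed:

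\begin{enumerate}
\item \emph{The Lyapunov/monotonicity step is genuinely delicate here.} You invoke ``monotone Lyapunov-type quantities that force each trajectory to flow to a critical point at infinity,'' but in this three-summand problem the usual B\"ohm functional $Y_2^{2m+3}Y_3^{2m}/Y_1$ blows up rather than converging, precisely because $Y_1\to 0$ along the ALC trajectories. The paper has to sidestep this: it first shows $X_1,Y_1\to 0$ (Proposition~\ref{X1Y1 vanish}), then uses a local repulsion argument near $P_{ALC-1}$ (Proposition~\ref{very technical}) together with the fact that $P_{ALC-2}$ is a sink in the Ricci-flat boundary $\mathcal{B}_{RF}$. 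Without an explicit replacement for the standard monotone quantity, your convergence claim for the Ricci-flat family is not yet established; the two-summand machinery of B\"ohm/Wink does not port directly.

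\item \emph{The generic-holonomy argument is logically incomplete.} You argue that for $m>1$ (and for $m=1$ away from the $\spin(7)$ value) ``the corresponding trajectory leaves every proper holonomy-reduction invariant subset, so these metrics have generic holonomy.'' That inference does not follow: failing to lie in the \emph{known} invariant subvarieties cut out by the K\"ahler, hyper-K\"ahler, quaternionic-K\"ahler, $G_2$, or $\spin(7)$ first-order equations does not by itself exclude a reduction of the full Riemannian holonomy group. The paper's actual argument is via the asymptotic model: the ALC limit is an $\mathbb{S}^1$-bundle over a metric cone whose holonomy is computed to contain $SO(4m+3)$ (for $m>1$) or $G_2$/$SO(7)$ (for $m=1$), and one then shows these groups cannot sit inside $SU(2m+2)$, $Sp(m+1)$, etc. That passage-to-the-limit-space step is what makes the holonomy conclusion airtight, and it should replace the ``leaves every invariant subset'' claim.
\end{enumerate}

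A smaller remark: the Einstein constant is not literally ``encoded by $s_3$''; the paper fixes $\Lambda=-(4m+3)$ throughout and recovers the Ricci-flat system as the flow-invariant boundary $\{W=0\}=\partial\mathcal{E}$. The coordinate $s_3$ measures the unstable direction transverse to this boundary, so $s_3=0$ trajectories stay Ricci-flat and $s_3>0$ trajectories have $W>0$. This is close to what you wrote, but the normalization-and-boundary mechanism is worth making explicit, since it is what lets you treat Ricci-flat and negative Einstein solutions in one phase space.
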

Although not included in the theorem above, the parameter $(s_1,s_2,s_3)$ can be the origin for $\gamma_{(s_1,s_2,s_3)}$. The metric represented is the Euclidean metric on $\mathbb{R}^{4m+4}$, as shown in Section \ref{Critical Points}. 
Metrics $\gamma_{(0,s_2,s_3)}$ are of two summand type. They first appeared in \cite{berard-bergery_sur_1982}. Metrics $\gamma_{(s_1,0,s_3)}$ is also of two summands type. They were constructed in \cite{chi_cohomogeneity_2019}. In the case where $m=1$, $\gamma_{\left(\frac{1}{\sqrt{5}},\frac{2}{\sqrt{5}},0\right)}$ is the $\spin(7)$ metric with the opposite chirality to the metric $\mathbb{A}_8$ constructed in \cite{cvetic_new_2004}. All the other metrics in $\gamma_{(s_1,s_2,s_3)}$ are new to the author. 

In some sense, the 2-dimensional parameter $(s_1,s_2,s_3)\in\mathbb{S}^2$ in Theorem \ref{zeta curve} and Theorem \ref{gamma curve} controls the asymptotic limit of the metric represented. The non-vanishing of $s_2$ in $(s_1,s_2,0)$ gives the ALC asymptotics. The parameter also describes how the principal orbit is squashed near the singular orbit. More details are discussed in Section \ref{Critical Points}. The non-vanishing of $s_3$ gives the AH asymptotics. As discussed in Section \ref{Einstein Equation}, the dynamic system of the negative Einstein metrics has a subsystem that can represent the Ricci-flat system. Integral curves with $s_3=0$ are solutions of this subsystem. 

New Taub-NUT metrics on $\mathbb{R}^{4m+4}$ with conical singularity at the origin are also constructed.
\begin{theorem}
\label{Gamma curve}
There exists a continuous $1$-parameter family of Einstein metrics $\{\Gamma_{s}\mid s\in[0,\epsilon)\}$ of cohomogeneity one on $\mathbb{R}^{4m+4}$. They all have conical singularity at the origin. Specifically,
\begin{enumerate}
\item
$\Gamma_{0}$ a singular ALC Ricci-flat metric on $\mathbb{R}^{4m+4}$. For $m=1$, the metric is $\spin(7)$ on $\mathbb{R}^8$. For $m>1$, the metric has generic holonomy.
\item 
$\Gamma_{s}$ with $s>0$ is a continuous 1-parameter family of singular AH negative Einstein metric on $\mathbb{R}^{4m+4}$.
\end{enumerate}
\end{theorem}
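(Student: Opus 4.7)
The plan is to adapt the construction used for Theorem \ref{gamma curve} by replacing the smooth fill-in condition at the origin with a conical one. In the dynamical system for cohomogeneity one Einstein metrics with principal orbit $\mathbb{S}^{4m+3}$ and point-like singular orbit, smoothness at the origin of $\mathbb{R}^{4m+4}$ forces compatible collapse rates on all three irreducible summands of the isotropy representation; this is what cuts out the $\mathbb{S}^2$ of initial data producing $\gamma_{(s_1,s_2,s_3)}$. A Taub-NUT type conical singularity arises when this compatibility is relaxed: the $Sp(m+1)/Sp(m)Sp(1)$ summand still shrinks together in the standard way, while the $Sp(1)U(1)/\Delta U(1)$ fiber direction collapses at a non-matching rate. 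This should yield a lower-dimensional locus of admissible initial conditions inside the phase space.

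First I would identify, inside the compactified phase space used in the proof of Theorem \ref{gamma curve}, the critical point $p_c$ corresponding to such conical initial data. Then I would linearize the Einstein ODE at $p_c$ and compute its unstable manifold. I expect the Ricci-flat subsystem to yield a single unstable orbit leaving $p_c$, giving $\Gamma_0$, while the full negative Einstein system gains one extra unstable direction coming from the cosmological-constant parameter, producing the $1$-parameter family $\{\Gamma_s\}_{s>0}$ for small $s$. Continuity in $s$ and the existence of an interval $[0,\epsilon)$ follow from smooth dependence of the unstable manifold on parameters.

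Next I would show forward completeness of these orbits and match them to the correct asymptotic critical points already identified in Theorem \ref{gamma curve}. In the case $s=0$, the orbit should flow into the ALC attractor used for the Ricci-flat end of $\gamma_{(s_1,s_2,0)}$ with $s_2>0$, yielding a singular ALC Ricci-flat metric; for $m=1$, the usual first-order reduction along the Ricci-flat Spin$(7)$ locus gives the $\spin(7)$ holonomy statement. In the case $s>0$, the orbit should be attracted to the AH critical point from Theorem \ref{gamma curve}(2), yielding singular AH negative Einstein metrics. Finally I would read off the conical behavior near the origin directly from the non-matching collapse rates prescribed at $p_c$.

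The main obstacle will be controlling the global behavior of these orbits. Since $p_c$ sits on the boundary of the smooth initial locus, both the linearization and the subsequent flow must be performed on a compactification of phase space where $p_c$ remains a genuine critical point rather than a singular locus. I expect one has to construct invariant regions or monotone Lyapunov quantities built from the Einstein constraint (as in the proof of Theorem \ref{gamma curve}) in order to rule out finite-time blow-up and to guarantee convergence to the correct asymptotic critical point rather than escape or oscillation; transferring those estimates from the smooth initial locus to the conical one, uniformly in $s\in[0,\epsilon)$, is the step I anticipate being most delicate.
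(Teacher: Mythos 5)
Your proposal is correct and follows essentially the same route as the paper: the conical initial data correspond to the critical point $P_{AC\text-2}$ (the cone over the \emph{Jensen} Einstein metric on $\mathbb{S}^{4m+3}$, initial condition \eqref{initial condition Taub-NUT-2}), whose linearization has a one-dimensional unstable manifold inside the Ricci-flat locus $\partial\mathcal{E}$ (giving $\Gamma_0$) and a two-dimensional one in the full system; global existence is obtained by showing the $\Gamma_s$ enter the same compact invariant set $\mathcal{S}$ used for $\gamma_{(s_1,s_2,s_3)}$, with the ALC endpoint $P_{ALC\text-2}$ for $s=0$, the AH family $P_{AH}$ for $s>0$, and the $\spin(7)$ statement for $m=1$ coming from tangency of the unstable eigenvector to $\mathcal{B}^-_{\spin(7)}$. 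One small caveat: $P_{AC\text-2}$ lies on the same flow-invariant boundary $\partial\mathcal{E}$ as the smooth fill-in points $P_0,P_{AC\text-1}$, so no extra compactification or new invariant-region estimates are needed beyond the ones already established.
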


Consider the holonomy of the Ricci-flat metrics in Theorems \ref{zeta curve}-\ref{Gamma curve}. Combining our Lemma \ref{generic holonomy} with Theorem 2.1 in \cite{hitchin_harmonic_1974} and \cite{wang_parallel_1989}, we obtain the following.
\begin{theorem}
\label{generic}
All negative Einstein metrics in Theorem \ref{zeta curve}-\ref{Gamma curve} does not have any parallel spinors. Ricci-flat metrics $\zeta_{(s_1,s_2,0)}$ and $\Gamma_0$ on $M^8$,
Ricci-flat metrics $\gamma_{\left(\frac{1}{\sqrt{5}},\frac{2}{\sqrt{5}},0\right)}$ on $\mathbb{R}^8$ have 1 parallel spinor.
All the other ALC Ricci-flat metrics in Theorem \ref{zeta curve}-\ref{Gamma curve} does not have any parallel spinor.
\end{theorem}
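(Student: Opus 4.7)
The proof splits cleanly along the sign of the Einstein constant, and the strategy is to pair Hitchin's integrability theorem with Wang's classification of holonomy groups that admit parallel spinors.

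First, for every negative Einstein metric in Theorems \ref{zeta curve}-\ref{Gamma curve}, I would invoke Theorem 2.1 of \cite{hitchin_harmonic_1974}, which states that a nontrivial parallel spinor forces Ricci-flatness. Since these metrics have $\Lambda<0$, this immediately rules out any parallel spinor. This dispatches all $\zeta_{(s_1,s_2,s_3)}$ and $\gamma_{(s_1,s_2,s_3)}$ with $s_3>0$, together with every $\Gamma_s$ for $s>0$, in one line.

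For the Ricci-flat cases, the plan is to read off the holonomy from Lemma \ref{generic holonomy} and then feed it into Wang's classification \cite{wang_parallel_1989}. Each underlying manifold is either a vector bundle over $\mathbb{HP}^m$ with simply connected fiber or $\mathbb{R}^{4m+4}$ itself, so it is simply connected and its restricted and full holonomy coincide; Berger's list together with Wang then determines the dimension of the space of parallel spinors purely from the holonomy subgroup. The metrics flagged in the statement have $\spin(7)\subset SO(8)$ holonomy (the $1$-parameter family $\zeta_{(s_1,s_2,0)}$ and $\Gamma_0$ on $M^8$, together with the single metric $\gamma_{(1/\sqrt{5},2/\sqrt{5},0)}$ on $\mathbb{R}^8$), giving exactly one parallel spinor by Wang. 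For every remaining ALC Ricci-flat metric, Lemma \ref{generic holonomy} yields full holonomy $SO(n)$, and Wang's table shows that only strict reductions of $SO(n)$ produce parallel spinors, so none exist.

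The main obstacle is thus not this theorem itself but rather the invoked Lemma \ref{generic holonomy}: establishing that the holonomy is genuinely generic away from the $\spin(7)$ points requires ruling out every proper Berger reduction compatible with the construction, in particular $\spin(7)$, $Sp(k)\cdot Sp(1)$, $Sp(k)$, $SU(2k)$, and (in the relevant dimensions) $G_2$. I would expect this to follow from examining a small collection of curvature components at a judiciously chosen point on the integral curve, where the metric tensor and its radial derivatives are already encoded by the dynamical system of Section \ref{Einstein Equation}. Once that algebraic check is in hand, the present theorem drops out by assembling Hitchin's integrability and Wang's classification as above.
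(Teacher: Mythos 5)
Your argument is correct and follows the paper's approach exactly: Hitchin's Theorem~2.1 rules out parallel spinors on the negative Einstein metrics since a parallel spinor forces Ricci-flatness, while Lemma~\ref{generic holonomy} combined with Wang's holonomy classification settles the Ricci-flat cases (one parallel spinor for the $\spin(7)$ metrics, none when the holonomy is generic). One side remark: your speculation that Lemma~\ref{generic holonomy} itself would be established by ``examining a small collection of curvature components at a judiciously chosen point'' is not what the paper does; instead, Lemma~\ref{generic holonomy} is proved by identifying the asymptotic cone at infinity from Lemma~\ref{Asym for RF} and noting that the holonomy of the limit space (a cone over a Jensen sphere, or an $\mathbb{S}^1$-bundle over a cone over $\mathbb{CP}^{2m+1}$) already contains $\spin(7)$, $G_2$, or $SO(4m+3)$, none of which embeds in $SU(2m+2)$ and hence precludes any K\"ahler or smaller reduction --- but since you correctly invoke the lemma as a given, this does not affect your proof of the theorem at hand.
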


In particular for $m=1$, the continuous family of Ricci-flat metrics $\gamma_{(s_1,s_2,0)}$ has the $\spin(7)$ metric $\mathbb{A}_8$ lies in the interior and all the other Ricci-flat metrics have generic holonomy. Hence the parallel spinor on $\mathbb{A}_8$ is not preserved under a continuous deformation of Ricci-flat metrics through the family $\gamma_{(s_1,s_2,0)}$. Such a phenomenon also occurs for $G_2$ holonomy \cite{chi_invariant_2019}.
Parallel spinors are preserved under a continuous deformation of Ricci-flat metrics if the manifold is compact. Please see Theorem A in \cite{wang_preserving_1991} for more details.

Principal orbit of manifolds studied in this article are from the group triple $(G,H,K)$ given by
$$
(Sp(m+1)U(1),Sp(m)Sp(1)U(1),Sp(m)\Delta U(1)).
$$
The principal orbit is the total space of quaternionic Hopf fibration
\begin{equation}
\label{quaterionic hopf}
\mathbb{S}^3\hookrightarrow \mathbb{S}^{4m+3}\rightarrow \mathbb{HP}^m.
\end{equation}
Take $\mathbb{S}^{4m+3}$ as the space of unit quaternionic vector in $\mathbb{H}^{m+1}$. The fibration $\mathbb{S}^{4m+3}\rightarrow \mathbb{HP}^{m}$ is given by $(q_1,\dots,q_{m+1})\mapsto [q_1:\ldots:q_{m+1}]$. 
The transitive action of $G$ on $\mathbb{S}^{4m+3}$ is given by
\begin{equation}
\label{group action}
(A,z)\cdot \mathbf{q}
:=A\mathbf{q}\bar{z}
\end{equation}
for each $(A,z)\in G$. The isotropy group for $(0,\dots,0,1)\in\mathbb{S}^{4m+3}$ is $K$. The action of $G$ passes down to the base. The isotropy group for $[0:\ldots:0:1]$ is $H$. Therefore, the quaternionic Hopf fibration is indeed the homogeneous fibration
$
H/K\hookrightarrow G/K\rightarrow G/H.
$
More details of the isotropy representation are discussed in the next section.

Let $M$ be the cohomogeneity one manifold with principal orbit $G/K$ and singular orbit $G/H$. Then $M$ is an $\mathbb{R}^4$ bundle over $\mathbb{HP}^m$. A cohomogeneity one metric on $M$ has the form of $dt^2+g_{G/K}(t)$, where $g_{G/K}(t)$ is an invariant metric on each $\{t\}\times G/K$ with $t>0$ and it collapse to an invariant metric on $G/H$ as $t\to 0$. We also construct cohomogeneity one Einstein manifolds where the singular orbit for these manifolds is a singleton. In that scenario, the homogeneous part $g_{G/K}(t)$ vanishes as $t\to 0$. Since the principal orbit is $\mathbb{S}^{4m+3}$, the cohomogeneity one manifold is topologically $\mathbb{R}^{4m+4}$. 

One feature of the case in this article that differs from the one in \cite{chi_invariant_2019} is that the singular orbit is irreducible and the fiber is of two irreducible summands. Moreover, irreducible summands in $\mathfrak{g}/\mathfrak{k}$ all have different dimensions, as shown in Section \ref{Einstein Equation}. The cohomogeneity one dynamic systems have less symmetry than the one in \cite{chi_invariant_2019}. It is worth mentioning that the cohomogeneity one equation in the article shares some degree of similarity with the one that appears in \cite{reidegeld_exceptional_2011}. The study may help shed some light on the global existence question of $\spin(7)$ metric with an Aloff--Wallach space as the principal orbit.

\begin{remark}
\label{group L}
There exists an intermediate group $L:=Sp(m)U(1)U(1)$ between $H$ and $K$. With the same group action \eqref{group action} of $G$, we can see that the group triple $(G,L,K)$ gives the complex Hopf fibration
\begin{equation}
\label{complex hopf}
\mathbb{S}^1\hookrightarrow \mathbb{S}^{4m+3}\rightarrow \mathbb{CP}^{2m+1}.
\end{equation}
Let $\tilde{M}$ be the vector bundle with principal orbit $G/K$ and singular orbit $G/L$. It is a natural question to ask if there are more complete cohomogeneity one Einstein metrics on $\tilde{M}$ besides those constructed in \cite{berard-bergery_sur_1982}. Specifically, isotropy representation of $G/L$ has two irreducible summands that allow each $\{t\}\times G/L$ with $t>0$ to be squashed and $g_{G/K}(t)$ is a $G$-invariant metric on a circle bundle over a squashed $\mathbb{CP}^{2m+1}$.
\end{remark}

The Einstein metrics constructed and recovered in this article have three kinds of asymptotic behaviors. We give definitions in the following.
\begin{definition}
\label{Asymp conical}
Let $(M,g_M)$ be a Riemannian manifold of dimension $n+1$. Let $(N,g_N)$ be an $n$-dimensional Riemannian manifolds and $(C(N),dt^2+t^2g_N)$ be the metric cone with base $N$. Let $\bullet$ denote the tip of the cone. $M$ is \emph{asymptotically conical (AC)} if for some $p\in M$, we have $\lim\limits_{l\to \infty}((M,p),\frac{1}{l}g_M)=((C(N),\bullet),dt^2+t^2g_N)$ in the pointed Gromov--Hausdorff sense.
\end{definition}
\begin{remark}
Note that if $(N,g_N)$ in Definition \ref{Asymp conical} is a standard sphere $\mathbb{S}^n$, the metric $dt^2+t^2g_N$ is the Euclidean metric on $\mathbb{R}^{n+1}$. Then $M$ is \emph{asymptotically Euclidean (AE)}.
\end{remark}
\begin{definition}
Let $(M,g_M)$ be a Riemannian manifold of dimension $n+2$. Let $(N,g_N)$ be an $n$-dimensional Riemannian manifolds and $(C(N),dt^2+t^2g_N)$ be the metric cone with base $N$. $M$ is \emph{asymptotically locally conical (ALC)} if for some $p\in M$, we have $\lim\limits_{l\to \infty}((M,p),\frac{1}{l}g_M)=(\hat{C}(N),\bullet),dt^2+C ds^2+t^2g_N)$ in the pointed Gromov--Hausdorff sense, where $\hat{C}(N)$ is some $\mathbb{S}^1$-bundle over $C(N)$ and $C>0$ is a constant.
\end{definition}
\begin{definition}
\label{Asymp Hyperbolic}
Let $(M,g_M)$ be a Riemannian manifolds of dimension $n+1$ with a boundary $\partial M$. $M$ is \emph{conformally compact} if there exists a positive function $f$ such that $(M,f^2g_M)$ extends to a smooth metric on $\overline{M}$.
\end{definition}
In Definition \ref{Asymp Hyperbolic}, it can be checked that sectional curvature of $g_M$ approaches to $-\|df\|_{f^2g_M}$ near $\partial M$. If $(M,g_M)$ is negative Einstein, then the sectional curvature must approach to a constant near $\partial M$. With normalization, we fix $\|df\|_{f^2g_M}=1$. Hence a conformally compact Einstein manifold is also called an \emph{asymptotically hyperbolic} (AH) manifold.

This article is structured as the following. In Section \ref{Einstein Equation}, we derive the cohomogeneity one Einstein equation with principal orbit $G/K$. Then finding a cohomogeneity one Einstein metric is equivalent to finding an integral curve defined on $[0,\infty)$. 
Then we apply coordinate change inspired by the one in \cite{dancer_non-kahler_2009}\cite{dancer_new_2009}. In the new coordinate, initial conditions and the asymptotic limits of the original system are transformed to critical points. Then the construction of Einstein metrics boils down to finding integral curves that emanate from one critical point and tend to the other. Proving the completeness of the metric is equivalent to showing that the new integral curve is defined on $\mathbb{R}$.

In Section \ref{Critical Points}, we compute linearizations of some critical points with geometric significance of the new system. There are three critical points that represents different initial conditions. One of them gives the smooth extension of the metric to $G/H$; one gives the smooth extension of the metric to the origin of $\mathbb{R}^{4m+4}$; and third one gives the singular extension to the origin of $\mathbb{R}^{4m+4}$. There are two types of critical points that represent different asymptotic limits. One of them represents the ALC limit and the other type serves as the AH limit for the integral curves.

In Section \ref{Compact invariant set}, we construct a compact invariant set that contains sellected critical points in the previous section  on its boundary. Linearization in the previous section helps to prove that some integral curves that emanate from these points are in the compact invariant set initially. Hence the completeness  of the represented metrics follows. The technique we use is very similar to the one in \cite{chi_cohomogeneity_2019}.

In Section \ref{Asymptotic}, we give a rigorous proof for the asymptotic behaviour of the complete integral curves. We prove that all the new Ricci-flat metrics constructed are ALC, generalizing the conclusion in \cite{cvetic_new_2004} and \cite{bazaikin_new_2007}. We also prove that all the new negative Einstein metrics constructed are AH.

In Section \ref{Holonomy}, we recover some well-known examples, including the K\"ahler--Einstein metrics in \cite{berard-bergery_sur_1982}, the quaternionic K\"ahler metric in \cite{swann_hyper-kahler_1991},
the $G_2$ and $\spin(7)$ metrics in \cite{bryant_construction_1989}\cite{gibbons_einstein_1990} and the $\spin(7)$ metric in \cite{cvetic_new_2004}. Then we show that all the ALC Ricci-flat metrics constructed have generic holonomy.

\section{Einstein Equation}
\label{Einstein Equation}
We derive the Einstein equation in this section. We first introduce some notation regarding representation theory. Let $\mathbb{I}$ be the trivial representation. Let $\mu_m$ be the matrix multiplication representation (over complex numbers) of $Sp(m)$. Let $\sigma^l$ denote the $l$-th symmetric tensor power of the matrix multiplication representation (over complex numbers) of $Sp(1)$ (hence $\mu_1=\sigma^1$). Let $\mathfrak{t}^l$ denote the complex representation of $U(1)$ of weight $l$. Define the inner product for $\mathfrak{g}$ as $Q(X,Y)=-\trace(XY)$. Note that $Q$ is non-degenerate on $\mathfrak{g}$ and equal to a multiple of the Killing form of $\mathfrak{sp}(m+1)$ when restricted to $Sp(m+1)$.

The action of $G$ on $T_{eK}(G/K)$ is equivalent to the adjoint action of $G$ on $\mathfrak{g}/\mathfrak{k}$. Let $(G,H,K)=(Sp(m+1)U(1),Sp(m)Sp(1)U(1),Sp(m)\Delta U(1))$. We have the following $Q$-orthogonal decomposition for $\mathfrak{g}$.
\begin{equation}
\begin{split}
\mathfrak{g}&=\mathfrak{h}\oplus [\mu_m\otimes \sigma^1]_\mathbb{R}\quad \text{as a $H$-module}\\
&=\mathfrak{l}\oplus [\mathfrak{t}^2]_\mathbb{R}\oplus  [\mu_m\otimes \mathfrak{t}^{1}]_\mathbb{R}\quad \text{as an $L$-module}\\
&=\mathfrak{k}\oplus \mathbb{I}\oplus [\mathfrak{t}^2]_\mathbb{R}\oplus  [\mu_m\otimes \mathfrak{t}^{1}]_\mathbb{R}\quad \text{as a $K$-module}
\end{split}.
\end{equation}
Consider $Sp(m+1)=U(2m+2)\cap Sp(2m+2; \mathbb{C})$ and embed $G$ in $Sp(2m+4;\mathbb{C})$. Identify $\mathbb{H}^{m+1}$ with $\mathbb{C}^{2m+2}=\mathbb{C}^{m+1}\oplus j\mathbb{C}^{m+1}$.
The isotropy representation of $G/K$ hence has a $Q$-orthonormal basis $\{E_1,E_2,E_3,E_{l_1 l_2}\mid l_1=1,\dots,m,\quad l_2=1,2,3,4\}$, where \begin{equation}
\begin{split}
&E_1=\frac{1}{2}\begin{bmatrix}
O&&&&&&\\
&\ddots&&&&&\\
&&O&&&&\\
&&&-i&0&&\\
&&&0&i&&\\
&&&&&i&0\\
&&&&&0&-i\\
\end{bmatrix},\\
&E_2=\frac{1}{\sqrt{2}}\begin{bmatrix}
O&&&\\
&\ddots&&&&\\
&&O&&&\\
&&&0&1&\\
&&&-1&0&\\
&&&&&O\\
\end{bmatrix},\quad 
E_3=\frac{1}{\sqrt{2}}\begin{bmatrix}
O&&&&&\\
&\ddots&&&&\\
&&O&&&\\
&&&0&i&\\
&&&i&0&\\
&&&&&O\\
\end{bmatrix},
\end{split}
\end{equation} and each $E_{l_1 l_2}$ is given by
$$
E_{l_1 l_2}=\frac{1}{2}\begin{bmatrix}
O&&&&O\\
&\ddots&&&\vdots\\
&&\ddots&&A_{l_2}\\
&&&\ddots&\vdots\\
O&\dots&-A_{l_2}^*&\dots&O\\
&&&&&O\\
\end{bmatrix},\quad l_1=1,\dots, m,\quad  l_2=1,2,3,4,
$$
with
$$O=\begin{bmatrix}
0&\\
&0
\end{bmatrix},\quad
A_1=\begin{bmatrix}
1&0\\
0&1
\end{bmatrix},\quad
A_2=\begin{bmatrix}
i&0\\
0&-i
\end{bmatrix},\quad A_3=\begin{bmatrix}
0&i\\
i&0
\end{bmatrix},\quad A_4=\begin{bmatrix}
0&-1\\
1&0
\end{bmatrix}.
$$
The trivial representation $\mathbb{I}$ is spanned by $E_1$, which is orthogonal to $\mathfrak{k}$. Note that $Q=-\frac{1}{4}B_1=-\frac{1}{2m+4}B_2$, where $B_1$ and $B_2$ are respectively the Killing form for $Sp(2;\mathbb{C})$ and $Sp(2m+2;\mathbb{C})$. We abuse the notation by using $Q$ to denote the invariant metric on $G$ that is induced by the inner product. Take $Q$ as the background metric.  By Schur's Lemma, an invariant metric on $G/K$ has the form of 
\begin{equation}
\label{invariant metric}
g_{G/K}=a^2\left.Q\right|_{\mathbb{I}}+b^2\left.Q\right|_{[\mathfrak{t}^2]_\mathbb{R}}+c^2\left.Q\right|_{\mathfrak{g}/\mathfrak{h}}.
\end{equation}
By Corollary 7.39 in \cite{besse_einstein_2008}, the formula of the scalar curvature for $g_{G/K}$ is 
$$
R_s=\frac{4}{b^2}+\frac{4m(m+2)}{c^2}-\frac{1}{2}\frac{a^2}{b^4}-\frac{m}{4}\dfrac{a^2}{c^4}-m\dfrac{b^2}{c^4}.
$$
Compute the first variation of the Hilbert--Einstein functional on $G/K$. The Ricci endomorphism is given by 
\begin{equation}
\label{ricci endo}
\begin{split}
r_a&=\frac{1}{2}\frac{a^2}{b^4}+\frac{m}{4}\frac{a^2}{c^4}\\
r_b&=\frac{2}{b^2}-\frac{1}{2}\frac{a^2}{b^4}+\frac{m}{2}\frac{b^2}{c^4}\\
r_c&=\frac{m+2}{c^2}-\frac{1}{8}\frac{a^2}{c^4}-\frac{1}{2}\frac{b^2}{c^4}
\end{split}.
\end{equation}

Note that $M\backslash (G/H)$ and $\mathbb{R}^{4m+4}\backslash\{0\}$ are both $G$-diffeomorphic to $(0,\infty)\times G/K$. We construct Einstein metrics $g=dt^2+g_{G/K}(t)$ by setting $(0,\infty)$ as a geodesic and assign $G$-invariant metric $g_{G/K}$ on each $\{t\}\times G/K$. Then \eqref{invariant metric} becomes a $S^2(\mathfrak{g}/\mathfrak{k})^K$-valued function on $t$, where $S^2(\mathfrak{g}/\mathfrak{k})^K$ is the space of $K$-invariant symmetric 2-tensor. By \cite{eschenburg_initial_2000}, the cohomogeneity one Einstein system is
\begin{equation}
\label{Old Einstein Equation}
\begin{split}
\frac{\ddot{a}}{a}-\left(\frac{\dot{a}}{a}\right)^2&=-\left(\frac{\dot{a}}{a}+2\frac{\dot{b}}{b}+4m\frac{\dot{c}}{c}\right)\frac{\dot{a}}{a}+\frac{1}{2}\frac{a^2}{b^4}+\frac{m}{4}\frac{a^2}{c^4}-\Lambda\\
\frac{\ddot{b}}{b}-\left(\frac{\dot{b}}{b}\right)^2&=-\left(\frac{\dot{a}}{a}+2\frac{\dot{b}}{b}+4m\frac{\dot{c}}{c}\right)\frac{\dot{b}}{b}+\frac{2}{b^2}-\frac{1}{2}\frac{a^2}{b^4}+\frac{m}{2}\frac{b^2}{c^4}-\Lambda\\
\frac{\ddot{c}}{c}-\left(\frac{\dot{c}}{c}\right)^2&=-\left(\frac{\dot{a}}{a}+2\frac{\dot{b}}{b}+4m\frac{\dot{c}}{c}\right)\frac{\dot{c}}{c}+\frac{m+2}{c^2}-\frac{1}{8}\frac{a^2}{c^4}-\frac{1}{2}\frac{b^2}{c^4}-\Lambda\\
\end{split}
\end{equation}
with conservation law
\begin{equation}
\label{Old Conservation Law}
\begin{split}
&\left(\frac{\dot{a}}{a}+2\frac{\dot{b}}{b}+4m\frac{\dot{c}}{c}\right)^2-\left(\frac{\dot{a}}{a}\right)^2-2\left(\frac{\dot{b}}{b}\right)^2-4m\left(\frac{\dot{c}}{c}\right)^2=R_s-(4m+2)\Lambda
\end{split}.
\end{equation}

There are three possible initial conditions for \eqref{Old Einstein Equation}. The first possibility is having $G/H$ as the singular orbit. The cohomogeneity one manifold $M$ is an $\mathbb{R}^4$-bundle over $\mathbb{HP}^m$. The principal orbit $G/K$ becomes the zero section $G/H$ as $t\to 0$. In order to smoothly extend the metric on the tubular neighbourhood around $G/H$, we have the following proposition.

\begin{proposition}
\label{3-sphere bundle initial condition}
The necessary and sufficient conditions for the metric $g=dt^2+g_{G/K}(t)$ to extend smoothly to a metric in a tubular neighborhood of $G/H$ is
\begin{equation}
\label{initial condition G/H}
\lim_{t\to 0} (a,b,c,\dot{a},\dot{b},\dot{c})=\left(0,0,h,1,\frac{\sqrt{2}}{2},0\right)
\end{equation}
for some $h>0$.
\end{proposition}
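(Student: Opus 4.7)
The plan is to apply the standard smoothness criteria for cohomogeneity one invariant metrics in a tubular neighborhood of a singular orbit, as formulated in \cite{eschenburg_initial_2000}. Such a neighborhood is $G$-equivariantly diffeomorphic to the associated bundle $G\times_H V$, where the normal slice $V$ is the four-dimensional $H$-representation whose unit sphere is the link $H/K\cong\mathbb{S}^3$. Concretely, $V=\mathbb{H}$ with $H = Sp(m)Sp(1)U(1)$ acting by $(A, q, z)\cdot w = qw\bar z$, so that the stabilizer of $v_0 := 1\in V$ is precisely $K$. The criteria of \cite{eschenburg_initial_2000} then say that $g = dt^2 + g_{G/K}(t)$ extends smoothly across $G/H$ if and only if (i) the $\mathfrak{g}/\mathfrak{h}$-component of $g_{G/K}(t)$ extends to a smooth $G$-invariant metric on $G/H$, and (ii) the $\mathfrak{h}/\mathfrak{k}$-component agrees to leading order in $t$ with $t^2$ times the round unit-sphere metric on $\mathbb{S}^3 \subset V$ induced from the Euclidean inner product $\langle w, w'\rangle = \mathrm{Re}(w\bar{w'})$ on $V$. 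No cross terms arise, because the $K$-modules $\mathbb{I}\oplus[\mathfrak{t}^2]_{\mathbb{R}}$ and $[\mu_m\otimes\sigma^1]_{\mathbb{R}}$ share no common irreducible summand.

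Condition (i) immediately forces $c(0) = h > 0$ together with $\dot c(0) = 0$, the latter because $c^2(t)$ must be a smooth function of the normal radius squared and hence even in $t$. For (ii) I would identify $H/K$ with the orbit $H\cdot v_0\subset V$, so that each basis vector $E_\alpha$ ($\alpha = 1,2,3$) corresponds via the infinitesimal action to a tangent vector $E_\alpha\cdot v_0 \in T_{v_0}\mathbb{S}^3$. Using the explicit matrix expressions for $E_1, E_2, E_3$ given in Section~\ref{Einstein Equation} and the standard identification of the last $\mathfrak{sp}(1)$ summand of $\mathfrak{g}$ with $\mathrm{Im}\,\mathbb{H}$, a direct calculation gives
$$
E_1\cdot v_0 = -i, \qquad E_2\cdot v_0 = \tfrac{1}{\sqrt{2}}\,j, \qquad E_3\cdot v_0 = \tfrac{1}{\sqrt{2}}\,k,
$$
of Euclidean squared norms $1, \tfrac{1}{2}, \tfrac{1}{2}$ respectively. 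Matching the diagonal form $a^2 Q|_{\mathbb{I}} + b^2 Q|_{[\mathfrak{t}^2]_{\mathbb{R}}}$ in the $Q$-orthonormal basis $\{E_\alpha\}$ to $t^2$ times the Euclidean metric at $v_0$ in the same basis then yields $a(t)^2 = t^2 + O(t^4)$ and $b(t)^2 = \tfrac{1}{2}t^2 + O(t^4)$, from which \eqref{initial condition G/H} follows.

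The main subtlety is the bookkeeping for $E_1$: it is a diagonal combination of an element of $\mathfrak{sp}(m+1)$ and of the auxiliary $\mathfrak{u}(1)$ factor of $\mathfrak{g}$, chosen to be $Q$-orthogonal to $\mathfrak{k}$. Both summands contribute to the infinitesimal action on $V$, and only after summing them does one obtain $E_1\cdot v_0 = -i$ of unit Euclidean length. This asymmetry between $E_1$ and $E_2, E_3$ is exactly what produces the unequal initial speeds $\dot a(0) = 1$ and $\dot b(0) = \tfrac{\sqrt{2}}{2}$ in \eqref{initial condition G/H}.
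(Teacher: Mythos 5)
Your proof is correct and follows essentially the same route as the paper's: both arguments reduce to the observation that $\left.Q\right|_{\mathbb{I}}+\tfrac{1}{2}\left.Q\right|_{[\mathfrak{t}^2]_\mathbb{R}}$ is the round unit metric on $H/K=\mathbb{S}^3$, and then invoke a standard singular-orbit smoothness criterion (you cite \cite{eschenburg_initial_2000}, the paper cites Lemma~9.114 of \cite{besse_einstein_2008}). Your write-up additionally carries out the explicit infinitesimal-action computation $E_1\cdot v_0=-i$, $E_2\cdot v_0=\tfrac{1}{\sqrt{2}}j$, $E_3\cdot v_0=\tfrac{1}{\sqrt{2}}k$ that justifies the norms $1,\tfrac12,\tfrac12$, a step the paper leaves implicit.
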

\begin{proof}
Since the unit sphere in $\mathfrak{q}_+$ is generated by $E_1$, $E_2$ and $E_3$.
It is clear that $\left.Q\right|_{\mathbb{I}}+\frac{1}{2}\left.Q\right|_{[\mathfrak{t}^2]_\mathbb{R}}$ is the standard metric for $H/K=\mathbb{S}^3$. The initial condition is then derived by Lemma 9.114 in \cite{besse_einstein_2008}.
\end{proof}

Another possible initial condition is $G/K$ collapsing to a singleton as $t\to 0$. Since $G/K=\mathbb{S}^{4m+3}$, the cohomogeneity one manifold is topologically $\mathbb{R}^{4m+4}$.  In order to extend the metric on the neighborhood of the origin of $\mathbb{R}^{4m+4}$, we have the following proposition.
\begin{proposition}
The necessary and sufficient conditions for the metric $g=dt^2+g_{G/K}(t)$ to extend smoothly to a metric in a tubular neighborhood of origin in $\mathbb{R}^{4m+4}$ is
\begin{equation}
\label{initial condition Taub-NUT-1}
\lim_{t\to 0} (a,b,c,\dot{a},\dot{b},\dot{c})=\left(0,0,0,1,\frac{\sqrt{2}}{2},\frac{1}{2}\right).
\end{equation}
\end{proposition}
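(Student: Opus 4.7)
The plan is to apply Lemma 9.114 of \cite{besse_einstein_2008} in the same spirit as Proposition \ref{3-sphere bundle initial condition}, but now for a collapse of the entire principal orbit to a single point rather than to a codimension-$4$ singular orbit. That lemma gives the necessary and sufficient condition for a cohomogeneity one metric $dt^2+g_{G/K}(t)$ to extend smoothly across a fixed point of the $G$-action: the rescaled metric $\tfrac{1}{t^2}g_{G/K}(t)$ must converge to the round metric of radius one on the collapsing sphere $G/K=\mathbb{S}^{4m+3}$, viewed as the unit sphere of $\mathbb{R}^{4m+4}$.

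The main computational step is therefore to express this round metric in the parameters $(a,b,c)$ of \eqref{invariant metric}. Take the base point $p_0=(0,\dots,0,1)\in\mathbb{S}^{4m+3}\subset\mathbb{H}^{m+1}$, and for each $Q$-orthonormal basis vector $E$ define the fundamental vector field $E^{*}_{p_0}:=\left.\tfrac{d}{ds}\right|_{s=0}\exp(sE)\cdot p_0$ using the action \eqref{group action}. A direct matrix computation with the explicit forms of $E_1$, $E_2$, $E_3$, $E_{l_1 l_2}$ under the identification $\mathbb{H}^{m+1}\cong\mathbb{C}^{m+1}\oplus j\mathbb{C}^{m+1}$ gives the Euclidean lengths $|E_1^{*}|_{\mathrm{Euc}}=1$, $|E_2^{*}|_{\mathrm{Euc}}=|E_3^{*}|_{\mathrm{Euc}}=\tfrac{\sqrt{2}}{2}$, and $|E_{l_1 l_2}^{*}|_{\mathrm{Euc}}=\tfrac{1}{2}$. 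Since $a$, $b$, $c$ are by definition the $g$-lengths of $E_1$, $\{E_2,E_3\}$, and the $\{E_{l_1 l_2}\}$ respectively, the round unit sphere corresponds to $(a,b,c)=\bigl(1,\tfrac{\sqrt{2}}{2},\tfrac{1}{2}\bigr)$.

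Combining these two pieces, smoothness across the origin of $\mathbb{R}^{4m+4}$ forces $(a,b,c)\to(0,0,0)$ together with $(a/t,b/t,c/t)\to\bigl(1,\tfrac{\sqrt{2}}{2},\tfrac{1}{2}\bigr)$ as $t\to 0$, which is exactly \eqref{initial condition Taub-NUT-1}. The main obstacle is the clean execution of the fundamental-vector-field calculation, in particular keeping track of the right-action of the $U(1)$ factor through $\bar z$; once the three Euclidean norms above are verified, both the necessity and the sufficiency halves of the proposition follow immediately from Besse's criterion.
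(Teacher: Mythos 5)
Your proposal is correct and follows essentially the same route as the paper: both invoke Lemma 9.114 of Besse and reduce the problem to identifying the round metric on $\mathbb{S}^{4m+3}$ with the invariant metric $\left.Q\right|_{\mathbb{I}}+\frac{1}{2}\left.Q\right|_{[\mathfrak{t}^2]_\mathbb{R}}+\frac{1}{4}\left.Q\right|_{\mathfrak{g}/\mathfrak{h}}$, i.e., $(a,b,c)=\bigl(1,\tfrac{\sqrt{2}}{2},\tfrac{1}{2}\bigr)$. Your explicit fundamental-vector-field computation (including the right action through $\bar z$, which doubles the contribution to $E_1^{*}$) is the calculation the paper leaves implicit in the sentence ``the unit sphere $\mathbb{S}^{4m+3}$ is generated by $E_1,E_2,E_3$ and $E_{lj}$'s,'' and your norms agree with the paper's coefficients.
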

\begin{proof}
The unit sphere $\mathbb{S}^{4m+3}$ is generated by $E_1,E_2,E_3$ and $E_{lj}$'s.
Therefore, if $$g_{G/K}(t)=t^2\left(\left.Q\right|_{\mathbb{I}}+\frac{1}{2}\left.Q\right|_{[\mathfrak{t}^2]_\mathbb{R}}+\frac{1}{4}\left.Q\right|_{\mathfrak{g}/\mathfrak{h}}\right),$$
$g=dt^2+g_{G/K}(t)$ is the flat metric on $\mathbb{R}^{4m+4}$. The initial condition is obtained by Lemma 9.114 in \cite{besse_einstein_2008}.
\end{proof}

Note that $G/K$ admits two homogeneous Einstein metrics. Hence for a cohomogeneity one metric of Taub-NUT type, $G/K$ can also degenerate to a point as a Jensen sphere \cite{jensen_einstein_1973}. Then the corresponding initial condition is given by
\begin{equation}
\label{initial condition Taub-NUT-2}
\lim_{t\to 0} (a,b,c,\dot{a},\dot{b},\dot{c})=\left(0,0,0,\beta,\frac{\sqrt{2}}{2}\beta,\frac{\sqrt{2m+3}}{2}\beta\right),
\end{equation}
where $(4m+3)(4m+2)\beta^2=6+\frac{16m(m+2)(2m+3)-12m}{(2m+3)^2}$. In other words,  if 
$$g_{G/K}(t)=\beta^2t^2\left(\left.Q\right|_{\mathbb{I}}+\frac{1}{2}\left.Q\right|_{[\mathfrak{t}^2]_\mathbb{R}}+\frac{2m+3}{4}\left.Q\right|_{\mathfrak{g}/\mathfrak{h}}\right),$$ then $dt^2+g_{G/K}(t)$ is a singular cone metric on $\mathbb{R}^{4m+4}$ with the Jensen sphere $\mathbb{S}^{4m+3}$ as its base.

As pointed out in Remark 2.9 in \cite{chi_invariant_2019}, in the Ricci-flat case, changing $h$ in \eqref{initial condition G/H} is essentially the homothetic change of the solution around $G/H$. Moreover, \eqref{initial condition G/H} \emph{does not} fully determine the metric in a tubular neighborhood of $G/H$. This is also the case for \eqref{initial condition Taub-NUT-1}. Using Lemma 1.1 in \cite{eschenburg_initial_2000}, we can prove that there exists a free parameter for $a-b$ of order 3 for \eqref{initial condition G/H} and \eqref{initial condition Taub-NUT-1}. We consider \eqref{initial condition G/H} bellow. Statements concerning \eqref{initial condition Taub-NUT-1} can be obtained without substantial change of the argument.

We first rephrase Lemma 1.1 in \cite{eschenburg_initial_2000} for $M$ below.
\begin{lemma}[\cite{eschenburg_initial_2000}]
\label{EW}
Let $\chi$ be the slice representation for $M$. Let $W_i=\hmo(S^i(\chi),S^2(\chi\oplus \mathfrak{g}/\mathfrak{h}))^H$ be the space of $H$-equivariant homogeneous polynomials of degree $i$. 
Consider a smooth curve $g(t): [0,\infty)\to S^2(\chi\oplus \mathfrak{g}/\mathfrak{h})^K$ with Taylor expansion $\sum_{i=0}^\infty g_it^i$ around $t=0$. The curve can be smoothly extended to $G/H$ as a symmetric 2-tensor if and only if each $g_i$ is an evaluation of some element in $W_i$ at $v_0=(1,0,0,0)\in \chi$.
\end{lemma}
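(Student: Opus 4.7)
The plan is to recognize this as a direct specialization of the Eschenburg--Wang smoothness criterion (Lemma 1.1 of \cite{eschenburg_initial_2000}) to our triple $(G,H,K)$, and then verify that the hypotheses needed to invoke it are satisfied in our setting. The input we need is that $H$ acts transitively on the unit sphere of the slice representation $\chi$ with isotropy $K$, which is clear here since $H/K=\mathbb{S}^3$ is precisely the unit sphere in $\chi=[\mu_m\otimes \sigma^1]_{\mathbb{R}}\cong \mathbb{H}$ (more accurately, the slice is the normal fiber $\mathbb{R}^4$ to $G/H$ in $M$, and $v_0=(1,0,0,0)$ is a chosen unit vector whose $H$-stabilizer is $K$).

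First I would apply the equivariant tubular neighborhood theorem to identify a $G$-invariant neighborhood of $G/H$ in $M$ with the associated disc bundle $G\times_H \chi$. Under this identification, the ray $\{tv_0\mid t\geq 0\}$ is a unit-speed normal geodesic issuing from $eH\in G/H$, the principal orbit $\{t\}\times G/K$ corresponds to $G\cdot(tv_0)$ for $t>0$, and $T_{v_0}(G\times_H\chi)$ splits $H$-equivariantly as $\chi\oplus \mathfrak{g}/\mathfrak{h}$. Consequently, a $G$-invariant smooth symmetric $2$-tensor $g$ on the tubular neighborhood is encoded by its restriction to the slice $\{e\}\times \chi$, which is a smooth $H$-equivariant map
\[
F:\chi\to S^2(\chi\oplus \mathfrak{g}/\mathfrak{h}),
\]
and the curve $g(t)$ in the statement is precisely $F(tv_0)$.

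Next I would invoke the classical fact (a consequence of G.\ Schwarz's theorem on smooth invariants and equivariants, as used in \cite{eschenburg_initial_2000}) that smoothness of the $H$-equivariant $F$ is equivalent to the existence of a formal Taylor expansion $F\sim \sum_{i\geq 0} P_i$ with each $P_i\in W_i=\hmo(S^i(\chi),S^2(\chi\oplus\mathfrak{g}/\mathfrak{h}))^H$. Restricting to the ray then yields
\[
g(t)=F(tv_0)=\sum_{i\geq 0} P_i(v_0,\dots,v_0)\, t^i,
\]
so that in the Taylor expansion $g(t)=\sum_i g_it^i$ one has $g_i=P_i(v_0,\dots,v_0)$, giving the necessity. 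For sufficiency, given any sequence $(P_i)_{i\geq 0}$ in $\prod_i W_i$ with $g_i=P_i(v_0,\dots,v_0)$, Borel's lemma together with an $H$-averaging produces a smooth $H$-equivariant $F:\chi\to S^2(\chi\oplus\mathfrak{g}/\mathfrak{h})$ whose jet at $0$ is $\sum P_i$; the bundle construction then promotes $F$ to a $G$-invariant smooth symmetric $2$-tensor on $G\times_H\chi$ whose radial profile reproduces $g(t)$.

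The subtle point, and the reason the statement has content beyond ordinary Taylor expansion, is that the $g_i$ live in the much smaller space $S^2(\chi\oplus \mathfrak{g}/\mathfrak{h})^K$ while the polynomial realizability requires them to come from $H$-equivariants evaluated at $v_0$; the hard part is that not every $K$-invariant tensor lifts to an $H$-equivariant polynomial of the correct degree, and it is exactly this discrepancy that allows a nontrivial free parameter (the third-order term in $a-b$) to appear, as the paper notes immediately after the lemma. I would therefore conclude the proof by noting that, beyond applying Eschenburg--Wang verbatim, no further argument is needed for the lemma itself; the identification of which $g_i$'s are realizable in low degree is handled in the subsequent analysis by an explicit description of $W_i$ for small $i$.
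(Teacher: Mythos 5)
The paper does not prove this lemma; it simply rephrases Lemma 1.1 of Eschenburg--Wang for the present manifold and cites it, so there is no ``paper's proof'' to compare against. Your sketch is a faithful recapitulation of the Eschenburg--Wang argument --- the equivariant tubular neighborhood identification with $G\times_H\chi$, the reduction of $G$-invariant symmetric $2$-tensors to smooth $H$-equivariant maps $F:\chi\to S^2(\chi\oplus\mathfrak{g}/\mathfrak{h})$ (using the $H$-splitting $T_{v_0}(G\times_H\chi)\cong\chi\oplus\mathfrak{g}/\mathfrak{h}$ and the fact that $F(tv_0)$ lands in the $K$-fixed part since $\mathrm{Stab}_H(v_0)=K$), the Schwarz/Malgrange characterization of smooth equivariants via equivariant formal Taylor expansions, and Borel's lemma plus $H$-averaging for sufficiency. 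You also correctly flag the genuine content: $W_i$ evaluated at $v_0$ can be a proper subspace of $S^2(\chi\oplus\mathfrak{g}/\mathfrak{h})^K$, which is what yields a nontrivial smoothness constraint and, in this example, a single free third-order parameter.

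One factual slip worth correcting: the slice representation here is $\chi=[\sigma^1\otimes\mathfrak{t}^1]_\mathbb{R}\cong\mathbb{H}$, the four-dimensional normal space to $G/H$ (with $Sp(m)$ acting trivially), not $[\mu_m\otimes\sigma^1]_\mathbb{R}$. The latter is $\mathfrak{g}/\mathfrak{h}$, the tangent space to the singular orbit $\mathbb{HP}^m$, of real dimension $4m$. Your parenthetical (``the slice is the normal fiber $\mathbb{R}^4\dots$'') shows you had the right picture, but the displayed identification is wrong as written. It does not affect the logical structure of the proof (the Eschenburg--Wang machinery is indifferent to which representation $\chi$ is), but it should be fixed because the explicit $H$-module decomposition of $S^k(\chi)$ is used in an essential way in the subsequent Proposition~\ref{more parameter!}.
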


Since $\chi=\left[\sigma^1\otimes \mathfrak{t}^{1}\right]_\mathbb{R}$ and $\mathfrak{g}/\mathfrak{h}=\left[\mu_m\otimes \sigma^1\right]_\mathbb{R}$ are inequivalent, we have decomposition
$$
W_i=W_i^+\oplus W_i^-:=\hmo(S^i(\chi),S^2(\chi))^H\oplus \hmo(S^i(\chi),S^2(\mathfrak{g}/\mathfrak{h}))^H
$$
By induction, we have
\begin{equation}
\begin{split}
S^{2k}(\chi)\otimes \mathbb{C}&=\sum_{l=0}^{k-1} \sum_{j=0}^{l} \left(\sigma^{2k-2j}\otimes \mathfrak{t}^{2k-2l}+ \sigma^{2k-2j}\otimes \mathfrak{t}^{-(2k-2l)}\right)+\sum_{l=0}^{k}\sigma^{2k-2l}\\
S^{2k+1}(\chi)\otimes \mathbb{C}&=\sum_{l=0}^{k} \sum_{j=0}^{l} \left(\sigma^{2k+1-2j}\otimes \mathfrak{t}^{2k+1-2l}+ \sigma^{2k+1-2j}\otimes \mathfrak{t}^{-(2k+1-2l)}\right)
\end{split}
\end{equation}
as $H$-modules. In particular, we have
$$
S^2(\chi)=[\sigma^2\otimes \mathfrak{t}^2]_\mathbb{R}+[\sigma^2]_\mathbb{R}+\mathbb{I}.
$$
We also have
\begin{equation}
S^2(\mathfrak{g}/\mathfrak{h})=\left\{\begin{array}{ll}
\left[\mu_m^2\otimes \sigma^2\right]_\mathbb{R}+\left[\mu_m\mathring{\wedge}\mu_m\right]_\mathbb{R}+\mathbb{I}& m\neq 1\\
\left[\sigma^2\otimes \sigma^2\right]_\mathbb{R}+\mathbb{I}& m=1\\
\end{array}\right.,
\end{equation}
where $\left[\mu_m \mathring{\wedge} \mu_m\right]_\mathbb{R}+\mathbb{I}=\left[\mu_m\wedge\mu_m\right]_\mathbb{R}$. 
Hence it is clear that
$$
W_{2k}^+=\left\{\begin{array}{ll}
\mathbb{R}& k=0\\
\mathbb{R}^3& k\geq 1
\end{array}\right. \quad 
W_{2k+1}^+=0 \quad
W_{2k}^-=\mathbb{R}\quad W_{2k+1}^-=0.
$$
\begin{proposition}
\label{more parameter!}
For initial condition \eqref{initial condition G/H}, there exists a free parameter for $a-b$ of order 3.
\end{proposition}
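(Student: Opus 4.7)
The plan is to apply Lemma~\ref{EW} and produce an explicit one-parameter family of deformations of the Taylor data that moves $a_3 - b_3$. Since the displayed decomposition above gives $W_{2k+1} = 0$, we have $g_i = 0$ for all odd $i$, which forces $a(t), b(t)$ to be odd in $t$ and $c(t)$ to be even in $t$. Under \eqref{initial condition G/H}, I therefore write
\[
a(t) = t + a_3 t^3 + O(t^5), \quad b(t) = \tfrac{\sqrt{2}}{2} t + b_3 t^3 + O(t^5), \quad c(t) = h + c_2 t^2 + O(t^4),
\]
so that $a_3 - b_3$ is precisely the claimed order-3 coefficient of $a - b$.

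Next I would translate the cohomogeneity-one metric into slice coordinates $v = tv_0 \in \chi$ and compute the $t^2$-coefficient $g_2$ of the slice Taylor expansion. Using a $Q|_\chi$-orthonormal frame $\{f_0, f_1, f_2, f_3\}$ of $\chi$ with $f_0 = v_0$ (radial), $f_1$ spanning the $K$-invariant $\mathbb{I}$-tangent line, and $f_2, f_3$ spanning the $[\mathfrak{t}^2]_\mathbb{R}$-tangent plane to the sphere $H/K$, together with the identification $E_i \cdot v = t\, E_i \cdot v_0$ and the normalizations $\|E_1 \cdot v_0\|^2_{Q|_\chi} = 1$ and $\|E_j \cdot v_0\|^2_{Q|_\chi} = \tfrac12$ for $j = 2, 3$ coming from Proposition~\ref{3-sphere bundle initial condition}, I obtain
\[
g_2 = 2 a_3\, f_1^{\odot 2} + 2\sqrt{2}\, b_3 (f_2^{\odot 2} + f_3^{\odot 2}) + 2 h c_2 \sum_{l,j} E_{lj}^{\odot 2}.
\]
By Lemma~\ref{EW}, the admissible deformations of $(a_3, b_3, c_2)$ correspond to elements of $W_2$ whose evaluation at $v_0$ is supported in this direction.

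The crux is to exhibit an element of $W_2^+$ whose evaluation at $v_0$ has $(f_1, f_2)$-components not in the ratio $1 : \sqrt{2}$, which is exactly the $a_3 = b_3$ locus. I take the manifestly $H$-equivariant degree-2 polynomials
\[
P_A(v) = \|v\|^2\, Q|_\chi, \qquad P_B(v) = v \odot v,
\]
both with values in $S^2\chi$. Their difference evaluates to $(P_A - P_B)(v_0) = Q|_\chi - v_0^{\odot 2} = f_1^{\odot 2} + f_2^{\odot 2} + f_3^{\odot 2}$, contributing in ratio $1 : 1$. Adding $\alpha(P_A - P_B)$ to any admissible element of $W_2$ remains in $W_2$ and shifts $(a_3, b_3) \mapsto (a_3 + \alpha/2,\, b_3 + \alpha/(2\sqrt{2}))$, so $a_3 - b_3$ shifts by $\alpha(\sqrt{2} - 1)/(2\sqrt{2}) \neq 0$ and is therefore free. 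The main obstacle is the slice-coordinate bookkeeping of Step~2: the orbit-tangent frame $\{E_i\}$ and the slice $Q|_\chi$-orthonormal frame $\{f_i\}$ are not proportional, so care is needed to keep track of the factors $\sqrt{2}$ and $t$ appearing in the identification $E_i \mapsto t E_i \cdot v_0$; once this is straightened out, the deformation argument is immediate.
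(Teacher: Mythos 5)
Your proposal follows the same basic route as the paper: invoke Lemma~\ref{EW}, observe that odd-order coefficients and the $\mathfrak{g}/\mathfrak{h}$-block contribute nothing, reduce to an analysis of $W_2^+$, and exhibit an $H$-equivariant quadratic tensor that moves $a_3 - b_3$. The slice-coordinate bookkeeping you flag as the main worry is fine as far as it goes. But the argument has a real gap in its final step, and the specific element you chose is the wrong one.

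First, the logic. Lemma~\ref{EW} is a criterion for \emph{smooth extension as a 2-tensor}; it does not by itself tell you which choices of $g_2 \in W_2(v_0)$ are compatible with the Einstein equation. Showing that $W_2^+$ contains a direction $P_A - P_B$ that changes $a_3 - b_3$ proves that the smoothness obstruction alone does not determine $a_3 - b_3$, but the proposition claims that the \emph{Einstein} initial value problem leaves $a_3 - b_3$ free, and for that you need the count of free parameters after imposing the equation as well. The paper gets this from the full Eschenburg--Wang dimension count: $W_2^+/W_0^+ \cong \mathbb{R}^2$ gives two formal parameters, and the geodesic/gauge normalization of $dt^2 + g_{G/K}(t)$ removes one, leaving exactly one. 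You never perform this count, and you omit the third generator $\Xi$ of $W_2^+$ entirely; with only $P_A$ and $P_B$ in hand you cannot see that $W_2^+$ is three-dimensional.

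Second, and more concretely, the element $P_A - P_B = \|v\|^2 Q|_\chi - v\odot v$ is the paper's $p$-direction, $(\sum x_i^2)\mathrm{id} - \Pi$, which the paper explicitly identifies as the \emph{determined} parameter, not the free one. You can verify this directly: expanding the Einstein system \eqref{Old Einstein Equation} at order $t^0$ with $a = t + a_3 t^3 + \cdots$, $b = \frac{\sqrt 2}{2}t + b_3 t^3 + \cdots$, $c = h + c_2 t^2 + \cdots$ gives the single relation $6 a_3 + 12\sqrt 2\, b_3 + \tfrac{8m c_2}{h} = -\Lambda$ (with $c_2$ determined by the third equation). Your shift $(\delta a_3, \delta b_3) = (\alpha/2, \alpha/(2\sqrt 2))$ changes the left side by $9\alpha$, so it does \emph{not} preserve the constraint surface; the admissible free direction is the kernel $\delta a_3 = -2\sqrt 2\, \delta b_3$, which is a nontrivial combination of $P_A - P_B$ and $\Xi$. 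Your conclusion that $a_3 - b_3$ is free is ultimately correct — the kernel direction does move $a_3 - b_3$ — but the path you took does not establish it: the single explicit deformation you wrote down is incompatible with the Einstein equation at the first nontrivial order.
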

\begin{proof}
Identify $g=dt^2+g_{G/K}(t)$ as a map $D(t)\oplus J(t)$, where $D(t)\colon [0,\infty)\to S^2(\chi)^K$ and $J(t)\colon [0,\infty)\to S^2(\mathfrak{g}/\mathfrak{k})^K$. In that way, the standard inner product on each fiber $\chi$ is given by $dt^2+t^2(\left.Q\right|_{\mathbb{I}}+\frac{1}{2}\left.Q\right|_{[\mathfrak{t}^2]_\mathbb{R}})$.

The Taylor expansion can be written as
\begin{equation}
\begin{split}
D(t)&=D_0+D_1t+D_2t^2+\dots\\
J(t)&=J_0+J_1t+J_2t^2+\dots
\end{split}.
\end{equation}
Since $W^-_{2k+2}\cong W^-_{2k}$ for $k\geq 0$ and $W^-_0$ is spanned by the identity matrix, we learn that $J(t)$ is determined by $J_0=h^2\id$. Hence no free variable of higher order comes from the $c$ component.

The generator for $W^+_0$ is the identity matrix $\id$.  Hence one of the generators of $W^+_2$ is $(\sum_{i=0}^3 x_i^2)\id$. Note that the identity map in $W^+_{2}$ is clearly $H$-equivariant. Hence the matrix $\Pi$, where $\Pi_{ij}=x_ix_j$ is another generator of $W^+_2$. By straightforward computation, the third generator of $W^+_2$ is $\Xi$ the projection map from $S^{2}(\chi)$ to the 3-dimensional subspace of $S^2(\chi)$.
$$
\Xi=
\begin{bmatrix}
x_1^2+x_2^2-x_3^2-x_4^2&0&2(x_2x_4-x_1x_3)&-2(x_1x_4+x_2x_3)\\
0&x_1^2+x_2^2-x_3^2-x_4^2
&2(x_1x_4+x_2x_3)&2(x_2x_4-x_1x_3)\\
2(x_2x_4-x_1x_3)&2(x_1x_4+x_2x_3)&-x_1^2-x_2^2+x_3^2+x_4^2
&0\\
-2(x_1x_4+x_2x_3)&2(x_2x_4-x_1x_3)&0&-x_1^2-x_2^2+x_3^2+x_4^2.
\end{bmatrix}.
$$ 
Evaluate these three generators at $v_0$ and take into account that $t$ is a unit speed geodesic. We learn that $D_0=\id$ and 
$D_2$ is a multiple of 
$$
p\left(\left(\sum_{i=0}^3 x_i^2\right)\id-\Pi\right)(v_0)+q\left(\left(\sum_{i=0}^3 x_i^2\right)\id-\Xi\right)(v_0)=
\begin{bmatrix}
0&&&\\
&p&&\\
&&p+2q&\\
&&&p+2q
\end{bmatrix}
$$
for some $p,q\in \mathbb{R}$. Since $W_2^+/W_0^+\cong \mathbb{R}^2$, there are in principle two free variables for $D(t)$ to extend smoothly around $G/H$ as a 2-tensor. However, with the geometric setting that $t$ is a unit geodesic, the parameter $p$ is determined. Therefore, $g$ can be extended smoothly around $G/H$ if
\begin{equation}
\begin{split}
a^2&=t^2+A t^4+O(t^6)\\
b^2&=t^2+B t^4+O(t^6)\\
c^2&=h^2+O(t^2),
\end{split}
\end{equation}
where $(\dddot{a}-\dddot{b})(0)=3(A-B)=-3q$ for some $q\in \mathbb{R}$.
\end{proof}

\begin{remark}
Proposition \ref{more parameter!} can be carried over to \eqref{initial condition Taub-NUT-1} by thinking $\mathbb{R}^{4m+4}$ as a vector bundle over a singleton. In this case, $K$ is the isotropy representation at $(1,0,\dots,0)$. The space to consider is $\hmo(S^i(\tilde{\chi}),S^2(\tilde{\chi}))^G$, where $\tilde{\chi}$ is the slice representation by the action of $G$. Lemma \ref{EW} can then be applied with no extra difficulties. Besides the discussion above, there is an alternative procedure to derive the smoothness condition. More details are presented in \cite{verdiani_smoothness_2020}.
\end{remark}

Inspired by \cite{dancer_non-kahler_2009}\cite{dancer_new_2009}, we apply coordinate change $d\eta=\left(\frac{\dot{a}}{a}+2\frac{\dot{b}}{b}+4m\frac{\dot{c}}{c}\right)dt$. The quantity $\frac{\dot{a}}{a}+2\frac{\dot{b}}{b}+4m\frac{\dot{c}}{c}$ is the trace of the shape operator of the hypersurface orbit. Define
\begin{equation}
\label{new variable}
\begin{split}
&X_1=\frac{\frac{\dot{a}}{a}}{\frac{\dot{a}}{a}+2\frac{\dot{b}}{b}+4m\frac{\dot{c}}{c}},\quad X_2=\frac{\frac{\dot{b}}{b}}{\frac{\dot{a}}{a}+2\frac{\dot{b}}{b}+4m\frac{\dot{c}}{c}},\quad X_3=\frac{\frac{\dot{c}}{c}}{\frac{\dot{a}}{a}+2\frac{\dot{b}}{b}+4m\frac{\dot{c}}{c}},\\
&Y_1=\frac{a}{b},\quad Y_2=\frac{\frac{1}{b}}{\frac{\dot{a}}{a}+2\frac{\dot{b}}{b}+4m\frac{\dot{c}}{c}},\quad Y_3=\frac{\frac{b}{c^2}}{\frac{\dot{a}}{a}+2\frac{\dot{b}}{b}+4m\frac{\dot{c}}{c}},\quad \tilde{W}=\frac{1}{\frac{\dot{a}}{a}+2\frac{\dot{b}}{b}+4m\frac{\dot{c}}{c}}.
\end{split}
\end{equation}
Define functions on $\eta$
\begin{equation}
\begin{split}
&R_1=\frac{1}{2} Y_1^2Y_2^2+\frac{m}{4} Y_1^2Y_3^2\\
&R_2=2Y_2^2-\frac{1}{2} Y_1^2Y_2^2+\frac{m}{2} Y_3^2\\
&R_3=(m+2)Y_2Y_3-\frac{1}{8}Y_1^2Y_3^2-\frac{1}{2}Y_3^2\\
&R_s=R_1+2R_2+4mR_3,\quad G=X_1^2+2X_2^2+4mX_3^2
\end{split}.
\end{equation}
Let $'$ denote the derivative with respect to $\eta$. The Einstein equations \eqref{Old Einstein Equation} become a polynomial system
\begin{equation}
\label{Polynomial Einstein Equation pre}
\begin{split}
\begin{bmatrix}
X_1\\
X_2\\
X_3\\
Y_1\\
Y_2\\
Y_3\\
 \tilde{W}
\end{bmatrix}'&=V(X_1,X_2,X_3,Y_1,Y_2,Y_3,\tilde{W})=\begin{bmatrix}
X_1(G+\Lambda  \tilde{W}^2-1)+R_1-\Lambda  \tilde{W}^2\\
X_2(G+\Lambda  \tilde{W}^2-1)+R_2-\Lambda  \tilde{W}^2\\
X_3(G+\Lambda  \tilde{W}^2-1)+R_3-\Lambda  \tilde{W}^2\\
Y_1(X_1-X_2)\\
Y_2(G+\Lambda  \tilde{W}^2-X_2)\\
Y_3(G+\Lambda  \tilde{W}^2+X_2-2X_3)\\
 \tilde{W}(G+\Lambda  \tilde{W}^2)
\end{bmatrix}\\
\end{split},
\end{equation}
with conservation law \eqref{Old Conservation Law} becomes
\begin{equation}
\label{Polynomial Conservation Law}
\begin{split}
\mathcal{C}:1-G&=R_s-(4m+2)\Lambda  \tilde{W}^2
\end{split}
\end{equation}
It is clear that $X_1+2X_2+4mX_3\equiv 1$ from the definition of coordinate change. In fact, let $$\mathcal{H}=\{(X_1,X_2,X_3,Y_1,Y_2,Y_3,\tilde{W})\mid X_1+2X_2+4m X_3=1\},$$
one can check that $\mathcal{C}\cap\mathcal{H}\cap \{ \tilde{W}\geq 0\}$ is a flow-invariant $5$-dimensional manifold in $\mathbb{R}^7$ with a $4$-dimensional boundary $\mathcal{C}\cap\mathcal{H}\cap \{ \tilde{W}\equiv 0\}$. 

\begin{remark}
\label{how to recover the origin}
For \eqref{Polynomial Einstein Equation pre} with $\Lambda<0$, the variable $t$ and functions $a$, $b$ and $c$ are recovered by
\begin{equation}
\label{recovery}
t=\int_{\eta_0}^\eta \tilde{W} d\tilde{\eta}, \quad a=\frac{Y_1 \tilde{W}}{Y_2},\quad b=\frac{ \tilde{W}}{Y_2},\quad c=\frac{ \tilde{W}}{\sqrt{Y_2Y_3}}.
\end{equation}
\end{remark}

\begin{remark}
\label{comment on W}
If we assume $\Lambda =0$ in \eqref{Polynomial Einstein Equation}. Since $ \tilde{W}'=G \tilde{W}$ in this case, we have 
$$
 \tilde{W}= \exp\left(\int_{\tilde{\eta}_0}^{\eta} G d\tilde{\eta}\right).
$$
Since $d\eta=\frac{1}{ \tilde{W}} dt=\exp\left(-\int_{\tilde{\eta}_0}^{\eta} G d\tilde{\eta}\right)dt$, the variable $t$ and functions $a$, $b$ and $c$ can be recovered without $ \tilde{W}$.  Therefore, for cohomogeneity one Ricci-flat metrics, we consider the vector field $V_{RF}$ on the 4-dimensional invariant manifold
$$\mathcal{C}_{RF}=\{(X_1,X_2,X_3,Y_1,Y_2,Y_3)\mid 1-G= R_1+2R_2+4mR_3,\quad  X_1+2X_2+4mX_3=1\}$$ 
given by $\eqref{Polynomial Einstein Equation pre}$ with all $\tilde{W}$ terms deleted.

On the other hand, it is clear \eqref{Polynomial Einstein Equation pre} has a subsystem restricted on $\mathcal{C}\cap\mathcal{H}\cap \{ \tilde{W}\equiv 0\}$.
Consider the map $\Psi: \mathcal{C}_{RF}\to \mathcal{C}$ by $(X_1,X_2,X_3,Y_1,Y_2,Y_3)\mapsto (X_1,X_2,X_3,Y_1,Y_2,Y_3,0)$. It is clear that $(\mathcal{C}_{RF},V_{RF})$ and $(\mathcal{C}\cap\mathcal{H},\left.V\right|_{\mathcal{C}\cap\mathcal{H}\cap \{ \tilde{W}\equiv 0\}} )$ are $\Psi$-related. Therefore, cohomogeneity one Ricci-flat metrics can be represented by integral curves on $\mathcal{C}\cap\mathcal{H}\cap \{ \tilde{W}\equiv 0\}$, even though the quantity $\frac{1}{\frac{\dot{a}}{a}+2\frac{\dot{b}}{b}+4m\frac{\dot{c}}{c}}$ does not actually vanish on the Ricci-flat manifold.
\end{remark}

\begin{remark}
\label{homothety}
Note that \eqref{Old Einstein Equation} is not invariant under homothety change if $\Lambda<0$. We fix $\Lambda=-(4m+3)$ in this article to fix the homothety for negative Einstein metrics.

If $\Lambda=0$ in \eqref{Old Einstein Equation}, then the original system is invariant under homothety change. The homothety change is transformed to the shifting of $\eta$ for an integral curve, while the graph of the integral curve remains unchanged. Combining with Remark \ref{comment on W}, we know that each integral curve for $V$ restricted on $\mathcal{C}\cap \mathcal{H}\cap \{\tilde{W}\equiv 0\}$ represents a solution in the original coordinate up to homothety.
\end{remark}

For a technical reason that is further discussed in Remark \ref{different eigenvalues} in Section \ref{Critical Points}, instead of studying system \eqref{Polynomial Einstein Equation pre} on $\mathcal{C}\cap\mathcal{H}$, we study a dynamic system that is equivalent to \eqref{Polynomial Einstein Equation pre}. Remark \ref{how to recover the origin}, Remark \ref{comment on W} and Remark \ref{homothety} are carried over.

On $\mathbb{R}^6$, define 
$$
\mathcal{E}=\{(X_1,X_2,X_3,Y_1,Y_2,Y_3)\mid 1-G-R_s\geq 0,\quad X_1+2X_2+4mX_3=1\}.
$$
It is a $5$-dimensional surface in $\mathbb{R}^6$ with a boundary. Define
\begin{equation}
\Phi \colon \mathcal{E}\to \mathcal{C}\cap\mathcal{H}\cap\{ \tilde{W}\geq 0\}.
\end{equation}
by sending $(X_1,X_2,X_3,Y_1,Y_2,Y_3)$ to $\left(X_1,X_2,X_3,Y_1,Y_2,Y_3, \sqrt{\frac{1-G-R_s}{-(4m+2)\Lambda}}\right)$. It is straightforward to check that $\Phi$ is a diffeomorphism. On $\mathcal{E}$, define function $W=\sqrt{\frac{1-G-R_s}{-(4m+2)\Lambda}}$. Consider the dynamic system
\begin{equation}
\label{Polynomial Einstein Equation}
\begin{bmatrix}
X_1\\
X_2\\
X_3\\
Y_1\\
Y_2\\
Y_3
\end{bmatrix}'=
V_{\Lambda\leq 0}(X_1,X_2,X_3,Y_1,Y_2,Y_3):=\begin{bmatrix}
X_1(G+\Lambda W^2-1)+R_1-\Lambda  W^2\\
X_2(G+\Lambda  W^2-1)+R_2-\Lambda W^2\\
X_3(G+\Lambda W^2-1)+R_3-\Lambda  W^2\\
Y_1(X_1-X_2)\\
Y_2(G+\Lambda W^2-X_2)\\
Y_3(G+\Lambda  W^2+X_2-2X_3)
\end{bmatrix}
\end{equation}
on $\mathcal{E}$. By straightforward computation, we have 
\begin{equation}
\label{boundary flow invariant}
(G+R_s)'=2(G+R_s-1)(G+\Lambda W^2),
\end{equation}
from which we deduce $$W'=W(G+\Lambda W^2).$$   
Therefore, the boundary 
$$
\partial \mathcal{E}:=\{(X_1,X_2,X_3,Y_1,Y_2,Y_3)\mid 1-G-R_s=0,\quad  X_1+2X_2+4mX_3=1\}
$$
is flow-invariant. Moreover, $(\mathcal{E}, V_{\Lambda\leq 0})$ and $(\mathcal{C}\cap\mathcal{H}\cap \{\tilde{W}\geq 0\},V)$ are $\Phi$-related. We have the following commutative diagram.
\begin{equation}
\label{All related}
\begin{CD}
 \left(\mathcal{C}\cap\mathcal{H}\cap\{\tilde{W}\equiv 0\},\left. V\right|_{\mathcal{C}\cap\mathcal{H}\cap\{\tilde{W}\equiv 0\}}\right) @>>> (\mathcal{C}\cap\mathcal{H}\cap\{\tilde{W}\geq 0\},V)\\
@AA\left.\Phi\right|_{\partial \mathcal{E}} A @   AA\Phi A\\
( \partial \mathcal{E},\left. V_{\Lambda\leq 0}\right|_{\partial \mathcal{E}})    @>>>   ( \mathcal{E},V_{\Lambda\leq 0})
\end{CD}
\end{equation}

The variable $t$ and functions $a$, $b$ and $c$ can be recovered by replacing $\tilde{W}$ with $W$ in Remark \ref{how to recover the origin} and Remark \ref{comment on W}. By Remark \ref{comment on W} and Remark \ref{homothety}, we fix $\Lambda=-(4m+3)$ in $V_{\Lambda \leq 0}$ in order to fix the homothety for negative Einstein metrics. Each integral curve for $V_{\Lambda \leq 0}$ restricted on $\partial \mathcal{E}$ represents a Ricci-flat solution in the original coordinate up to homothety. Define $\mathcal{P}=\{(X_1,X_2,X_3,Y_1,Y_2,Y_3)\mid Y_1,Y_2,Y_3\geq 0\}$. It is clear that $\mathcal{E}\cap \mathcal{P}$ is flow-invariant.  By the discussion above, it is justified to denote $\partial \mathcal{E}\cap \mathcal{P}$ as $\mathcal{B}_{RF}$.

\begin{proposition}
\label{big eta means big t}
If $\Lambda=0$ in \eqref{Old Einstein Equation}, the solution for the original system is defined on $(0,\infty)$ if the corresponding integral curve is defined on $\mathbb{R}$. If $\Lambda<0$ in \eqref{Old Einstein Equation}, the solution for the original system is defined on $(0,\infty)$ if the corresponding integral curve is defined on $\mathbb{R}$ and $R_s\geq 0$ along the curve.
\end{proposition}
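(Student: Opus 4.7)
The plan is to prove that $t(\eta)\to\infty$ as $\eta\to\infty$; combined with the fact that $t=0$ at the initial critical point (reached as $\eta\to-\infty$) as dictated by Proposition \ref{3-sphere bundle initial condition} and its analogues, this yields that the $t$-interval of existence of $(a,b,c)$ contains $(0,\infty)$. In both the Ricci-flat and negative Einstein settings the geodesic parameter is recovered by $dt = \tilde W\,d\eta$ (respectively $dt = W\,d\eta$ after the diffeomorphism $\Phi$), so what we really have to verify is the divergence of one of these integrals at $+\infty$.

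The decisive input is a Cauchy--Schwarz bound. Applying Cauchy--Schwarz to the linear relation $X_1+2X_2+4mX_3=1$ in the inner product weighted by $(1,2,4m)$ gives
\[
G \;=\; X_1^2 + 2X_2^2 + 4mX_3^2 \;\geq\; \frac{1}{4m+3},
\]
with equality only on the diagonal $X_1=X_2=X_3=\tfrac{1}{4m+3}$.

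In the case $\Lambda=0$, I would invoke Remark \ref{comment on W}: from $\tilde W' = G\tilde W$ one has $\tilde W(\eta) = \tilde W(\eta_0)\exp\!\left(\int_{\eta_0}^\eta G\,d\tau\right)$, so the lower bound on $G$ forces $\tilde W(\eta) \geq \tilde W(\eta_0)\,e^{(\eta-\eta_0)/(4m+3)}$ on $[\eta_0,\infty)$, whence $\int_{\eta_0}^{\infty}\tilde W\,d\tau = \infty$ and $t\to\infty$. In the case $\Lambda=-(4m+3)$, I would use the conservation law \eqref{Polynomial Conservation Law} to eliminate $W^2$ in $W' = W(G+\Lambda W^2)$, obtaining the clean identity
\[
\frac{W'}{W} \;=\; \frac{(4m+3)\,G + R_s - 1}{4m+2}.
\]
Combining $G\geq \tfrac{1}{4m+3}$ with the hypothesis $R_s\geq 0$ makes the right-hand side non-negative, so $W$ is monotonically non-decreasing. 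An integral curve in the interior $\mathcal{E}\setminus\partial\mathcal{E}$ has $W(\eta_0)>0$, so $W(\eta)\geq W(\eta_0)>0$ on $[\eta_0,\infty)$ and hence $\int_{\eta_0}^{\infty}W\,d\tau = \infty$.

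The main obstacle is recognising the clean rewriting of $W'/W$ after substituting the conservation law; once this is in hand, the role of the hypothesis $R_s\geq 0$ becomes transparent, as it prevents $W$ from decaying to zero at $+\infty$, while Cauchy--Schwarz supplies the complementary inequality automatically. Everything else reduces to a routine exponential-sandwich estimate on $\tilde W$ (or a bounded-below estimate on $W$).
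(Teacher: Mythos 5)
Your argument is correct and matches the paper's strategy in spirit, but you fill in two gaps that the paper delegates either to a citation or to the reader. For the Ricci-flat case, the paper simply cites Lemma 5.1 of Buzano--Dancer--Wang, whereas you derive the conclusion directly: the Cauchy--Schwarz bound $G \geq \tfrac{1}{4m+3}$ on the constraint surface $X_1+2X_2+4mX_3=1$, combined with $\tilde W' = G\tilde W$, gives the explicit exponential lower bound $\tilde W(\eta) \geq \tilde W(\eta_0)e^{(\eta-\eta_0)/(4m+3)}$, so $\int \tilde W\,d\eta$ diverges. For the negative Einstein case, the paper merely asserts that $R_s \geq 0$ makes $W$ increasing; your substitution of the conservation law into $W' = W(G+\Lambda W^2)$ to get
$$\frac{W'}{W} = \frac{(4m+3)G + R_s - 1}{4m+2}$$
makes the simultaneous roles of the hypothesis $R_s\geq 0$ and of Cauchy--Schwarz completely transparent, and your observation that the curve lies in $\mathrm{int}(\mathcal{E})$ (so $W(\eta_0)>0$, which then propagates forward by monotonicity) closes the potential degeneracy at $W=0$. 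These are genuine clarifications, but the underlying mechanism --- $W$ (resp.\ $\tilde W$) is non-decreasing and bounded away from zero, hence $t=\int W\,d\eta$ diverges --- is exactly the one used in the paper.
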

\begin{proof}
The Ricci-flat case was proven in Lemma 5.1 \cite{buzano_family_2015}. As for the negative Einstein case, since $R_s\geq 0$ along the corresponding integral curve, it is clear that $W$ is increasing along the curve. Hence we have $\lim\limits_{\eta\to\infty}t=\infty$. The proof is complete.
\end{proof}

To some extent, by the proposition above, the problem of constructing a cohomogeneity one Einstein metric $dt^2+g_{G/K}(t)$ on $(0,\infty)\times G/K$ is transformed to finding an integral curve of \eqref{Polynomial Einstein Equation} on $\mathcal{E}$ that is defined on $\mathbb{R}$. The initial conditions at $t=0$ are transformed to limits of these integral curves as $\eta\to -\infty$. In Section \ref{Critical Points}, we see that initial conditions \eqref{initial condition G/H}, \eqref{initial condition Taub-NUT-1} and \eqref{initial condition Taub-NUT-2} are transformed to critical points of the new system. Hence the next step is to show that integral curves that emanate from theses critical points are defined on $\mathbb{R}$.

There are some integral curves already known to be defined on $\mathbb{R}$. These curves lie in several subsystems of \eqref{Polynomial Einstein Equation} besides $\mathcal{B}_{RF}$. We give a short summary in the following.

Straightforward computation shows that $$\mathcal{B}_{Rd}:=\mathcal{E}\cap\mathcal{P}\cap \{X_1-X_2\equiv 0, Y_1^2 \equiv 2\}$$ is flow-invariant. Integral curves on this set represents metrics with $a^2\equiv 2b^2$ imposed. Hence the 3-sphere $H/K$ is round (hence the subscript ``Rd'') and the subsystem is of two summands type. This case is studied in \cite{wink_cohomogeneity_2017}\cite{bohm_non-compact_1999}. Furthermore, for $m=1$, there exists an integral curve that represents the $\spin(7)$ metric in \cite{bryant_construction_1989}\cite{gibbons_einstein_1990}. The metric can be represented by a straight line in terms of variables in \eqref{new variable}.

One can also see that $$\mathcal{B}_{FS}:=\mathcal{E}\cap\mathcal{P}\cap \{2Y_2-Y_3\equiv 0, X_2-X_3\equiv 0\}$$ is flow-invariant. Integral curves on this set represents cohomogeneity one metrics with $b^2\equiv 2c^2$ imposed. Under this setting, the homogeneous metric on $\mathbb{CP}^{2m+1}$ is the Fubini--Study metric and it is K\"ahler--Einstein. The imposed equation is also part of the K\"ahler condition shown in \cite{dancer_kahler-einstein_1998}. The circle bundle $\mathrm{Prin}(k)$ over $\mathbb{CP}^{2m+1}$ is classified by the multiple $k$ of an indivisible integral cohomology class in $H^2(\mathbb{CP}^{2m+1},\mathbb{Z})$. For our case in $\mathcal{B}_{FS}$, the principal orbit $G/K$ is the circle bundle $\mathrm{Prin}(1)$ over the K\"ahler--Einstein $\mathbb{CP}^{2m+1}$. This case is included in \cite{berard-bergery_sur_1982}.

The reduced system on the invariant set $$\mathcal{B}_{ALC}:=\mathcal{E}\cap\mathcal{P}\cap \{Y_1\equiv 0, X_1\equiv 0\}$$ carries two pieces of information. On one hand, if $a= O(1)$ while $b,c= O(t)$ at the infinity of some cohomogeneity one Einstein metrics, variables $Y_1$ and $X_1$ converge to zero along the corresponding integral curve.  Hence $\mathcal{B}_{ALC}$ serves as the ``invariant set of ALC limit''. On the other hand, the subsystem on $\mathcal{B}_{ALC}$ is essentially the one that appears in \cite{wink_cohomogeneity_2017}\cite{bohm_non-compact_1999} with respect to the group triple $(Sp(m)U(1),Sp(m)Sp(1),Sp(m+1))$. For $m=1$, there exists a $G_2$ metric on the cohomogeneity one space \cite{bryant_construction_1989}\cite{gibbons_einstein_1990}. The metric can be represented by a straight line in terms of variables in \eqref{new variable}.

Finally, for $m=1$, there exists a pair of invariant sets $\mathcal{B}^\pm_{\spin(7)}$ that represent the $\spin(7)$ conditions of positive/negative chirality. This case is studied in \cite{cvetic_new_2004} and a continuous 1-parameter family of $\spin(7)$ metrics is discovered. On one boundary of this family lies the $\spin(7)$ metric in \cite{bryant_construction_1989}\cite{gibbons_einstein_1990}. This case is discussed in more details in Section \ref{m=1}.

\section{Critical Points}
\label{Critical Points}
We study critical points of vector field $V_{\Lambda\leq 0}$ in \eqref{Polynomial Einstein Equation} in this section. Let $P$ be a critical point of $V_{\Lambda\leq 0}$. If an integral curve defined on $\mathbb{R}$ has $P$ as its limit as $\eta\to -\infty$, then the coordinates of $P$ represent the initial condition for the metric $dt^2+g_{G/K}(t)$ as $t\to 0$ up to the first order. Indeed, we see that initial conditions \eqref{initial condition G/H}, \eqref{initial condition Taub-NUT-1} and \eqref{initial condition Taub-NUT-2} are transformed to critical points. On the other hand, if the integral curve has $P$ as its limit as $\eta\to \infty$, then $P$ represents the asymptotic limit for the metric as $t\to \infty$ up to first order. A critical point can carry these two pieces of information simultaneously. 
 
Through computing linearizations at these points, we are able to prove the existence of Einstein metrics that are defined on a tubular neighbourhood around $G/H$ and a neighbourhood around the origin of $\mathbb{R}^{4m+4}$. The proof for the completeness of these metrics then boils down to showing that these integral curves are defined on $\mathbb{R}$.

On $\mathcal{B}_{RF}=\partial \mathcal{E}\cap \mathcal{P}$, where the function $W$ vanishes, we have the following critical points and boundary conditions.
\begin{enumerate}
\item $P_{0}:=\left(\frac{1}{3},\frac{1}{3},0,\sqrt{2},\frac{\sqrt{2}}{3},0\right)$\\

\item $P_{AC-i}:=\left(\frac{1}{4m+3},\frac{1}{4m+3},\frac{1}{4m+3},y_1,y_2,y_3\right), \quad i=1,2$
\begin{enumerate}
\item $P_{AC-1}: y_1=\sqrt{2},\quad 2y_2=y_3=\frac{2\sqrt{2}}{4m+3}$
\item $P_{AC-2}: y_1=\sqrt{2},\quad 2y_2=(2m+3)y_3=\frac{4m+6}{4m+3}\sqrt{\frac{4m+2}{(2m+3)^2+2m}}$
\end{enumerate}

\item $P_{ALC-i}:=\left(0,\frac{1}{4m+2},\frac{1}{4m+2},0,y_2,y_3\right), \quad i=1,2$
\begin{enumerate}
\item $P_{ALC-1}: 2y_2=y_3=\frac{1}{2m+1}\sqrt{\frac{4m+1}{2m+2}}$
\item $P_{ALC-2}: 2y_2=(m+1)y_3=\frac{m+1}{4m+2}\sqrt{\frac{8m+2}{(m+1)^2+m}}$
\end{enumerate}

\item $P_{ALC-0}:=\left(0,\frac{1}{2},0,0,\frac{\sqrt{2}}{4},0\right)$

\item $\left(0,-\frac{1}{2},\frac{1}{2m},0,0,\frac{1}{m}\sqrt{\frac{2-m}{2}}\right),\quad m\leq 2$

\item $(a,a,b,y_1,0,0), \quad y_1\neq 0,\quad 3a^2+4mb^2=3a+4mb=1$

\item $(x_1,x_2,x_3,0,0,0), \quad x_1^2+2x_2^2+4mx_3^2=x_1+2x_2+4mx_3=1$
\end{enumerate}

On $\mathrm{int}(\mathcal{E})\cap \mathcal{P}$, we have the following.
\begin{enumerate}
\item $P_{AH}(y_1)=\left(\frac{1}{4m+3},\frac{1}{4m+3},\frac{1}{4m+3},y_1,0,0\right), \quad y_1\geq 0,\quad  W=\sqrt{\frac{1}{-\Lambda(4m+3)}}$\\

\item $P_{QK}=\left(\frac{1}{2m+3},\frac{1}{2m+3},\frac{1}{4m+6},\sqrt{2},0,\frac{\sqrt{2}}{2m+3}\right),\quad W=\frac{1}{2m+3}\sqrt{\frac{m+3}{-\Lambda}}$\\

\item $\left(\frac{m+2}{4(m+1)^2+m+2},\frac{2m+2}{4(m+1)^2+m+2},\frac{m+1}{4(m+1)^2+m+2},0,0,\sqrt{\frac{2}{4(m+1)^2+m+2}}\right),\quad W=\sqrt{\frac{m+2}{-\Lambda(4(m+1)^2+m+2)}}$
\end{enumerate}

\begin{figure}[h!]
\begin{center}
\includegraphics[width=5in]{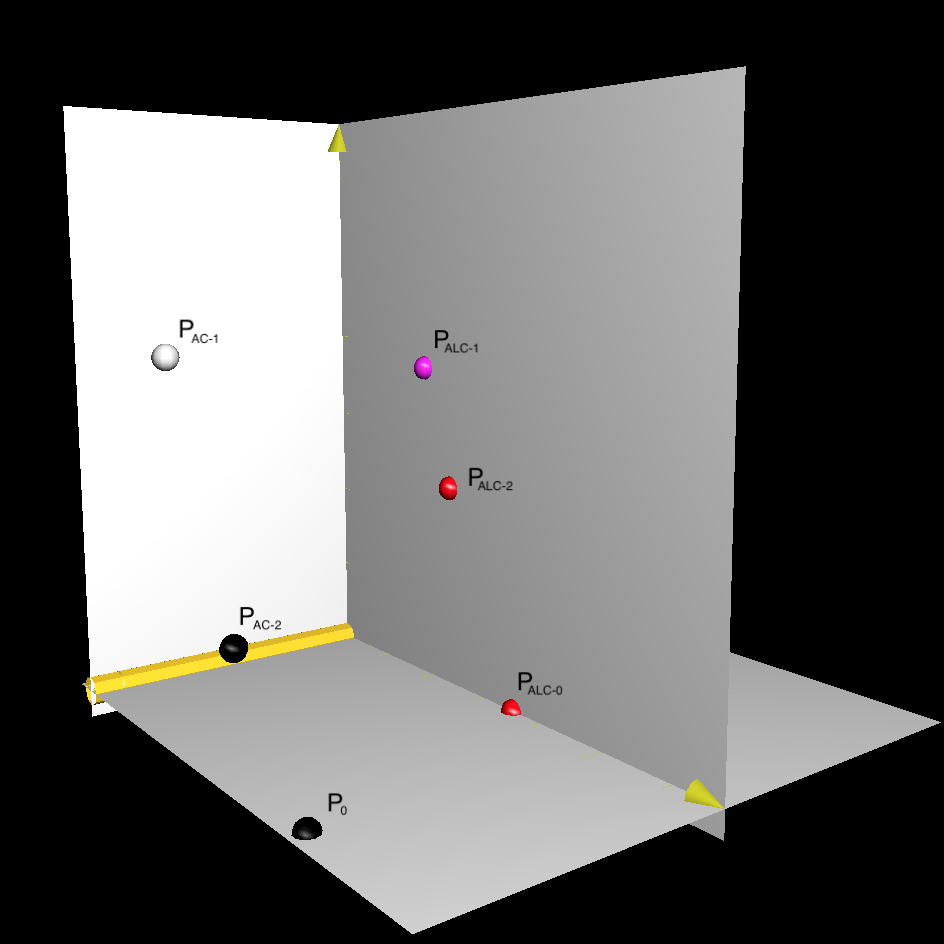}
\caption{Critical Points in $\mathcal{E}\cap \mathcal{P}$ Projected on $Y$-space}
\label{Fig1}
\end{center}
\end{figure}

In this article, we mainly focus on critical points $P_0$, $P_{AC-1}$, $P_{AC-2}$, $P_{ALC-2}$ and $P_{AH}(y_1)$. With the help of the software Maple, we compute the linearization $\mathcal{L}$ of \eqref{Polynomial Einstein Equation} at these critical points and compute the eigenvalues and eigenvectors. As we only consider system \eqref{Polynomial Einstein Equation} restricted on $\mathcal{E}$. We only focus on eigenvectors that are tangent to $\mathcal{E}$, i.e., orthogonal to $N_{\mathcal{E}}$ the normal vector field on $\mathcal{E}$.
Note that $\partial \mathcal{E}$ is the intersection of $\mathcal{E}$ and the algebraic surface $1-G-R_s=0$. Therefore, for integral curves that stay in $\partial \mathcal{E}$, eigenvectors are orthogonal to the normal vector field $N_{\partial \mathcal{E}}$ on the algebraic surface $1-G-R_s=0$ in addition to $N_{\mathcal{E}}$. We have
$$
\quad N_{\mathcal{E}}=\begin{bmatrix}
1\\
2\\
4m\\
0\\
0\\
0
\end{bmatrix},\quad 
\quad N_{\partial \mathcal{E}}=\begin{bmatrix}
2X_1\\
4X_2\\
8mX_3\\
-Y_1Y_2^2-\frac{m}{2}Y_1Y_3^2\\
-Y_1^2Y_2+8Y_2+4m(m+2)Y_3\\
-\frac{m}{2}Y_1^2Y_3-2mY_3+4m(m+2)Y_2
\end{bmatrix}.
$$

\subsection{$P_0$}
For an integral curve that emanates from $P_0=\left(\frac{1}{3},\frac{1}{3},0,\sqrt{2},\frac{\sqrt{2}}{3},0\right)$, one can show that the point is \eqref{initial condition G/H} under the new coordinate \eqref{Polynomial Einstein Equation}. Integral curves emanating from this point represent smooth Einstein metrics on the tubular neighbourhood of $G/H$.
The linearization at the point is
\begin{equation}
\mathcal{L}(P_0)=\begin{bmatrix}
-\frac{8m+6}{18m+9}&\frac{8m}{18m+9}&0&\frac{(12m+8)\sqrt{2}}{54m+27}&\frac{4m\sqrt{2}}{6m+3}&-\frac{4m(m+2)\sqrt{2}}{18m+9}\\
\frac{4m}{18m+9}&-\frac{4m+6}{18m+9}&0&-\frac{(12m+4)\sqrt{2}}{54m+27}&\frac{4m\sqrt{2}}{6m+3}&-\frac{4m(m+2)\sqrt{2}}{18m+9}\\
-\frac{1}{6m+3}&-\frac{2}{6m+3}&-\frac{2}{3}&\frac{\sqrt{2}}{18m+9}&-\frac{\sqrt{2}}{2m+1}&\frac{(m+2)\sqrt{2}}{6m+3}\\
\sqrt{2}&-\sqrt{2}&0&0&0&0\\
\frac{(4m+3)\sqrt{2}}{18m+9}&\frac{(2m+3)\sqrt{2}}{18m+9}&0&-\frac{2}{54m+27}&\frac{2}{6m+3}&\frac{4m(m+2)}{18m+9}\\
0&0&0&0&0&\frac{2}{3}
\end{bmatrix}
\end{equation}
Eigenvalues, along with their respective eigenvectors that are tangent to $\mathcal{E}$, are the following. 
$$
\quad \lambda_1=\lambda_2=\lambda_3=\frac{2}{3},\quad\lambda_4=-\frac{2}{3},\quad \lambda_5=-\frac{4}{3}
$$
\begin{equation}
\begin{split}
&v_1=\begin{bmatrix}
-4m(m+2)\\
-4m(m+2)\\
3(m+2)\\
0\\
-2\sqrt{2}m(m+2)\\
6\sqrt{2}
\end{bmatrix},
v_2=\begin{bmatrix}
-4\\
2\\
0\\
-9\sqrt{2}\\
-\sqrt{2}\\
0
\end{bmatrix},
v_3=\begin{bmatrix}
-4m\\
-4m\\
3\\
0\\
-2(m+1)\sqrt{2}\\
0
\end{bmatrix},
v_4=\begin{bmatrix}
-4m\sqrt{2}\\
-4m\sqrt{2}\\
3\sqrt{2}\\
0\\
4m\\
0
\end{bmatrix},
v_5=\begin{bmatrix}
-4\sqrt{2}\\
2\sqrt{2}\\
0\\
9\\
1\\
0
\end{bmatrix}
\end{split}
\end{equation}
Hence the general linearized solution emanating from $P_0$ is of the form
\begin{equation}
\label{linearized zeta pre}
P_0+s_1 e^\frac{2\eta}{3}v_1+s_2 e^\frac{2\eta}{3}v_2+s_3 e^\frac{2\eta}{3}v_3
\end{equation}
for some constants $s_i\in \mathbb{R}$. Note that the correspondence between germs of linearized solution \eqref{linearized zeta pre} and $(s_1,s_2,s_3)\in\mathbb{R}^3$ is not 1 to 1. For example, $(1,1,1)$ and $(2,2,2)$ give the same linearized solution. The redundancy is cut out by fixing $\sum_{i=1}^3s_i^2=1$.
By Hartman--Grobman theorem, there is a 1 to 1 correspondence between each choice of $(s_1,s_2,s_3)\in \mathbb{S}^2$ and an actual solution curve that emanates $P_0$. Hence we can use $\zeta_{(s_1,s_2,s_3)}$ to denote the actual solution that approaches to \eqref{linearized zeta pre} near $P_0$.  Moreover, by the unstable version of Theorem 4.5 in \cite{coddington_theory_1955}, there is some $\delta>0$ that
\begin{equation}
\label{linearized zeta negative einstein}
\zeta_{(s_1,s_2,s_3)} = P_0+s_1e^\frac{2\eta}{3}v_1+s_2e^\frac{2\eta}{3}v_2+s_3e^\frac{2\eta}{3}v_3+O\left(  e^{\left(\frac{2}{3}+\delta\right)\eta} \right).
\end{equation}

\begin{remark}
\label{different eigenvalues}
Here we explain the advantage of using system \eqref{Polynomial Einstein Equation} instead of \eqref{Polynomial Einstein Equation pre}. The linearization of \eqref{Polynomial Einstein Equation pre} at $P_0$ has two distinct positive eigenvalues. Hence the error term of a linearized solution may dominates terms with the smaller eigenvalues, which create extra difficulties in estimating a function near $P_0$. In \eqref{linearized zeta negative einstein}, we only have one positive eigenvalue. As the error of the linearized solution is dominated near $P_0$, we can safely make an estimate using the linearized solution.
\end{remark}

In this article, we consider $\zeta_{(s_1,s_2,s_3)}$ with $s_1>0$ and $s_2,s_3\geq 0$. In order the let $\zeta_{(s_1,s_2,s_3)}$ enter $\mathcal{E}\cap \mathcal{P}$ initially, we must have $s_1> 0$ so that $Y_3$ is positive initially along the curve. The geometric meaning of having $s_2\geq 0$ is to allow $H/K$ to be squashed in a way that $a^2\leq 2b^2$ for $dt^2+a^2\left.Q\right|_{\mathbb{I}}+b^2\left.Q\right|_{[\mathfrak{t}^2]_\mathbb{R}}+c^2\left.Q\right|_{\mathfrak{q}_-}$. Whether there exists a complete metric that is represented by $\zeta_{(s_1,s_2,s_3)}$ with $s_2<0$ is to be known. In order to let  $\zeta_{(s_1,s_2,s_3)}$ enter $\mathcal{E}\cap\mathcal{P}$ initially, we must have $s_3\geq 0$.

It is clear that $P_0\in\partial \mathcal{E}$. Since $N_{\partial\mathcal{E}}(P_0)$ is parallel to
$$
\begin{bmatrix}
3\\
6\\
0\\
-\sqrt{2}\\
9\sqrt{2}\\
6\sqrt{2}m(m+2)
\end{bmatrix},
$$
one can check that $v_1$ and $v_2$ are orthogonal to $N_{\partial\mathcal{E}}(P_0)$. Therefore, the 1-parameter family $\zeta_{(s_1,s_2,0)}$ stays in the invariant set $\mathcal{B}_{RF}$. Hence each $\zeta_{(s_1,s_2,0)}$ near $P_0$ in $\mathcal{E}\cap \mathcal{P}$ represents a Ricci-flat metric defined on the tubular neighborhood around $\mathbb{HP}^m$. Each $\zeta_{(s_1,s_2,s_3)}$ with $s_3>0$ near $P_0$ represents a negative Einstein metric defined on the tubular neighborhood around $\mathbb{HP}^m$.

There are some $\zeta_{(s_1,s_2,s_3)}$ known to be defined on $\mathbb{R}$. Note that $\zeta_{(s_1,0,s_3)}$ lies on $\mathcal{B}_{Rd}$. These integral curves are of two summands type. By \cite{wink_cohomogeneity_2017}\cite{bohm_non-compact_1999}, we know that each $\zeta_{(1,0,0)}$ is an integral curve on $\mathbb{R}$ that originates from $P_0$ and tends to $P_{AC-2}$ and each $\zeta_{(s_1,0,s_3)}$ with $s_3>0$ is an integral curves that originates from $P_0$ and tend to $P_{AH}(\sqrt{2})$. $\zeta_{(s_1,s_2,0)}$ with $s_2>0$ in the case $m=1$ were studied in \cite{cvetic_new_2004}. These integral curves all tend to $P_{ALC-2}$. In Section \ref{Compact invariant set}, we construct a compact invariant set that contains all $\zeta_{(s_1,s_2,s_3)}$ with $s_1,s_2,s_3\geq 0$.

\subsection{$P_{AC-1}$ and $P_{AC-2}$}
\label{AC1 and AC2}
Consider 
$P_{AC-1}=\left(\frac{1}{4m+3},\frac{1}{4m+3},\frac{1}{4m+3},\sqrt{2},\frac{\sqrt{2}}{4m+3},\frac{2\sqrt{2}}{4m+3}\right).$
It is clear that the point corresponds to the initial condition \eqref{initial condition Taub-NUT-1}. We have 
\tiny
\begin{equation}
\mathcal{L}(P_{AC-1})=\begin{bmatrix}
-\frac{4m+2}{4m+3}&0&0&\frac{8\sqrt{2}(2m+1)(m+1)}{(4m+3)^3}&-\frac{8\sqrt{2}m(m+1)}{(4m+3)^2}&\frac{4\sqrt{2}m(m+1)}{(4m+3)^2}\\
0&-\frac{4m+2}{4m+3}&0&-\frac{4\sqrt{2}(m+1)}{(4m+3)^3}&-\frac{8\sqrt{2}m(m+1)}{(4m+3)^2}&\frac{4\sqrt{2}m(m+1)}{(4m+3)^2}\\
0&0&-\frac{4m+2}{4m+3}&-\frac{4\sqrt{2}(m+1)}{(4m+3)^3}&\frac{6\sqrt{2}(m+1)}{(4m+3)^2}&-\frac{3\sqrt{2}(m+1)}{(4m+3)^2}\\
\sqrt{2}&-\sqrt{2}&0&0&0&0\\
\frac{\sqrt{2}}{(4m+3)(2m+1)}&-\frac{(2m-1)\sqrt{2}}{(4m+3)(2m+1)}&\frac{4\sqrt{2}m}{(4m+3)(2m+1)}&-\frac{2}{(4m+3)^3}&\frac{4m+6}{(4m+3)^2}&\frac{2m}{(4m+3)^2}\\
\frac{2\sqrt{2}}{(4m+3)(2m+1)}&\frac{(4m+6)\sqrt{2}}{(4m+3)(2m+1)}&-\frac{4\sqrt{2}}{(4m+3)(2m+1)}&-\frac{4}{(4m+3)^3}&\frac{8m+12}{(4m+3)^2}&\frac{4m}{(4m+3)^2}
\end{bmatrix}.
\end{equation}
\normalsize
Eigenvectors, along with their respective eigenvalues, that are tangent to $\mathcal{C}\cap\mathcal{H}$ are the following. 
$$
\lambda_1=\lambda_2=\lambda_3=\frac{2}{4m+3},\quad \lambda_4=\lambda_5=-\frac{4m+4}{4m+3}
$$
\begin{equation}
\begin{split}
&v_1=\begin{bmatrix}
-4m\sqrt{2}\\
-4m\sqrt{2}\\
3\sqrt{2}\\
0\\
4m\\
-(8m+12)
\end{bmatrix},
v_2=\begin{bmatrix}
-(4m+2)\sqrt{2}\\
\sqrt{2}\\
\sqrt{2}\\
-(4m+3)^2\\
-(4m+3)\\
-(8m+6)
\end{bmatrix},
v_3=\begin{bmatrix}
0\\
0\\
0\\
0\\
-1\\
-2
\end{bmatrix},\\
&v_4=\begin{bmatrix}
-4\sqrt{2}m(m+1)\\
0\\
\sqrt{2}(m+1)\\
2m(4m+3)\\
0\\
2
\end{bmatrix},
v_5=\begin{bmatrix}
-4\sqrt{2}(m+1)^2\\
2\sqrt{2}(m+1)\\
\sqrt{2}(m+1)\\
(4m+3)(2m+3)\\
1\\
0
\end{bmatrix}
\end{split}
\end{equation}
Therefore, there exists a 2-parameter family of integral curves $\gamma_{(s_1,s_2,s_3)}$ with $(s_1,s_2,s_3)\in\mathbb{S}^2$ that emanate from $P_{AC-1}$ such that
\begin{equation}
\label{linearized gamma negative einstein}
\gamma_{(s_1,s_2,s_3)}=P_{AC-1}+s_1e^\frac{2\eta}{4m+3}v_1+s_2e^\frac{2\eta}{4m+3}v_2+s_3e^\frac{2\eta}{4m+3}v_3+O\left( e^{\left(\frac{2}{4m+3}+\delta\right)\eta} \right).
\end{equation}

In this article, we consider $\gamma_{(s_1,s_2,s_3)}$ with $s_1,s_2,s_3\geq 0$.  The choice for $s_1\geq 0$ is to allow the $\mathbb{CP}^{2m+1}$ in $G/K$ to be squashed in a way that $b^2\leq 2c^2$ for $dt^2+a^2\left.Q\right|_{\mathbb{I}}+b^2\left.Q\right|_{[\mathfrak{t}^2]_\mathbb{R}}+c^2\left.Q\right|_{\mathfrak{q}_-}$. The geometric meaning of having $s_2\geq 0$ is the same as the one for $\zeta_{(s_1,s_2,s_3)}$. In order the let $\gamma_{(s_1,s_2,s_3)}$ enter $\mathcal{E}\cap \mathcal{P}$ initially, we must have $s_3\geq 0$. 

One can check that $P_{AC-1}\in \partial\mathcal{E}$. Since $N_{\partial\mathcal{E}}(P_{AC-1})$ is parallel to
$$
\begin{bmatrix}
4m+3\\
2(4m+3)\\
4m(4m+3)\\
-(2m+1)\sqrt{2}\\
(2m+3)(2m+1)\sqrt{2}\\
m(2m+1)\sqrt{2}\\
\end{bmatrix},
$$
it is clear that $\gamma_{(s_1,s_2,0)}$ is a 1-parameter family of integral curves that stay in $\mathcal{B}_{RF}$. Hence one obtain a 1-parameter family of Ricci-flat metrics and a 2-parameter family of negative Einstein metrics on the neighborhood around the origin in $\mathbb{R}^{4m+4}$.

Some $\gamma_{(s_1,s_2,s_3)}$ are known to be defined on $\mathbb{R}$. A trivial example is $\gamma_{(0,0,0)}$ that represent the standard Euclidean metric. With $s_1>0$ and $s_2\geq 0$, $\gamma_{(s_1,0,s_2)}$ stays in $\mathcal{B}_{Rd}$, with $\lim\limits_{\eta\to \infty}\gamma_{(1,0,0)}=P_{AC-2}$ and  $\lim\limits_{\eta\to \infty}\gamma_{(s_1,0,s_2)}=P_{AH}(\sqrt{2})$ for $s_2>0$ \cite{chi_cohomogeneity_2019}. Moreover, $\gamma_{(0,0,1)}$ is simply the hyperbolic cone with the standard sphere as its base. It is also known that  $\gamma_{(0,s_2,s_3)}$ stays in $\mathcal{B}_{FS}$.  In particular, $\gamma_{(0,1,0)}$ is the almost K\"ahler--Einstein metric with $P_{ALC-1}$ as  its limit \cite{berard-bergery_sur_1982}\cite[Theorem 9.130]{besse_einstein_2008}. For $s_2, s_3>0$, we know that $\lim\limits_{\eta\to \infty}\gamma_{(0,s_2,s_3)}=P_{AH}(y_1)$ for some $y_1\in [0,\sqrt{2})$. As shown in Section \ref{QK and HK}, there also exists an isolated example for another value of $(s_1,s_2,s_3)$, which is the quaternionic K\"ahler metric constructed in \cite{swann_hyper-kahler_1991}. 

As for $P_{AC-2}=\left(\frac{1}{4m+3},\frac{1}{4m+3},\frac{1}{4m+3},\sqrt{2},y_2,y_3\right)$, 
where $y_2=\frac{2m+3}{4m+3}\sqrt{\frac{4m+2}{(2m+3)^2+2m}}$ and $y_3=\frac{2}{4m+3}\sqrt{\frac{4m+2}{(2m+3)^2+2m}}$,
the point corresponds to initial condition \eqref{initial condition Taub-NUT-2}. Moreover, by Lemma 4.4 in \cite{chi_invariant_2019}, we know that if an integral curve defined on $\mathbb{R}$ converges to $P_{AC-2}$, then the Einstein metric represented has an AC asymptotic limit as
$$dt^2+\beta^2t^2\left(\left.Q\right|_{\mathbb{I}}+\frac{1}{2}\left.Q\right|_{[\mathfrak{t}^2]_\mathbb{R}}+\frac{2m+3}{4}\left.Q\right|_{\mathfrak{q}_-}\right),$$
where $(4m+3)(4m+2)\beta^2=6+\frac{16m(m+2)(2m+3)-12m}{(2m+3)^2}$. 

Eigenvalues of $\mathcal{L}(P_{AC-2})$, whose corresponding eigenvectors are tangent to $\mathcal{E}$, are 
$$
\lambda_1=\frac{2}{4m+3},\quad \rho_1,\quad \rho_2,\quad \sigma_1\quad \sigma_2,
$$
where $\rho_2<0<\frac{2}{4m+3}<\rho_1$ are two roots of
\begin{equation*}
\begin{split}
y&=(64m^4+320m^3+516m^2+342m+81)x^2\\
&\quad +(64m^4+304m^3+448m^2+264m+54)x\\
&\quad -(64m^3+240m^2+248m+72).
\end{split}
\end{equation*}
and $\sigma_2<\sigma_1<0$ are two roots of 
\begin{equation*}
\begin{split}
y&=(64m^4+320m^3+516m^2+342m+81)x^2\\
&\quad +(64m^4+304m^3+448m^2+264m+54)x\\
&\quad +(32m^3+96m^2+88m+24).
\end{split}
\end{equation*}
The eigenvectors that correspond to $\frac{2}{4m+3}$ and $\rho_1$ are respectively 
$$
v_1=\begin{bmatrix}
0\\
0\\
0\\
0\\
-(2m+3)\\
-2
\end{bmatrix}, v_2=\begin{bmatrix}
-2\rho_1\\
\rho_1\\
0\\
-3\sqrt{2}\\
-y_2\\
y_3
\end{bmatrix}.
$$
It is straightforward to check that $P_{AC-2}\in\partial \mathcal{E}$ and $v_2$ is orthogonal to $N_{\partial \mathcal{E}}(P_{AC-2})$. Therefore there exists an integral curve $\Gamma$ on $\partial\mathcal{E}$ such that
$$
\Gamma=P_{AC-2}+e^{\rho_1\eta} v_2+O\left( e^{(\rho_1+\delta)\eta}\right).
$$
On the other hand, it is easy to check that $P_{AC-2}+e^{\frac{2\eta}{4m+3}}v_1$ is the hyperbolic cone with Jensen sphere as its base. In fact, the critical point is actually a sink in the subsystem restricted on $\mathcal{B}_{Rd}\cap\mathcal{B}_{RF}$ and $v_1$ is the only unstable eigenvector for $P_{AC-2}$ in the subsystem $\mathcal{B}_{Rd}$. In order to obtain new integral curves, we consider linearized solution in the form of
$$
P_{AC-2}+e^{\frac{2\eta}{4m+3}} v_1+s e^{\rho_1\eta} v_2
$$
for some $s\in\mathbb{R}$. If some actual solution $\Gamma_{s}$ corresponds to the linearized solution with $s \neq 0$, then as discussed in Remark \ref{different eigenvalues}, we have
$$
\Gamma_s=P_{AC-2}+ e^{\frac{2\eta}{4m+3}} v_1+s e^{\rho_1\eta} v_2+O\left( e^{\left(\frac{2}{4m+3}+\delta\right)\eta}\right)
$$
for some $\delta>0$. However, the third term can possibly be merged into $O\left( e^{\left(\frac{2}{4m+3}+\delta\right)\eta}\right)$ since it is possible that $\frac{2}{4m+3}+\delta<\rho_1$. In that way, the value of $s$ is difficult to trace.

\subsection{$P_{ALC-2}$ and  $P_{AH}(y_1)$}
Einstein metrics constructed in this article are represented by integral curves that emanate from $P_0$, $P_{AC-1}$ and $P_{AC-2}$. In Section \ref{Asymptotic}, we show that most of the integral curves of Ricci-flat metrics converges to $P_{ALC-2}$. 

Recall that
$P_{ALC-2}=\left(0,\frac{1}{4m+2},\frac{1}{4m+2},0,\frac{m+1}{8m+4}\sqrt{\frac{8m+2}{(m+1)^2+m}},\frac{1}{4m+2}\sqrt{\frac{8m+2}{(m+1)^2+m}}\right).$ We claim the following.
\begin{proposition}
\label{PALC is ALC limit}
If an integral curve defined on $\mathbb{R}$ converges to $P_{ALC-2}$, then the Einstein metric represented is ALC.
\end{proposition}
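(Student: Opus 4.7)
The plan is to read off the asymptotic behavior of $a,b,c$ as functions of $t$ from the convergence of the integral curve to $P_{ALC-2}$, and then match the limit metric to the ALC model in the definition. Two preliminary reductions simplify matters. First, since $G(P_{ALC-2})=1/(4m+2)>0$ and $W'=W(G+\Lambda W^2)$, we have $W'>0$ whenever $W>0$ is sufficiently small; consequently no interior trajectory can converge to $P_{ALC-2}$ as $\eta\to\infty$, so the hypothesis forces the curve to lie in $\partial\mathcal{E}$ and thus (by Remark \ref{comment on W}) to represent a Ricci-flat metric. Second, the logarithmic identities $(\ln a)'=X_1$, $(\ln b)'=X_2$, $(\ln c)'=X_3$, $(\ln\tilde W)'=G$ reduce the task to integrating $X_i$ and $G$ against $\eta$.

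The key technical step is a linearization analysis of $V_{\Lambda\leq 0}|_{\partial\mathcal{E}}$ at $P_{ALC-2}$ on the stable manifold. From $Y_1'=Y_1(X_1-X_2)$ together with $X_2\to 1/(4m+2)$, one obtains $Y_1\sim C_1 e^{-\eta/(4m+2)}$, so $R_1=Y_1^2\bigl(\tfrac{1}{2}Y_2^2+\tfrac{m}{4}Y_3^2\bigr)=O(e^{-2\eta/(4m+2)})$. The equation $X_1'=X_1(G-1)+R_1$ then has linear decay rate $(4m+1)/(4m+2)$ with exponentially small forcing, so variation of constants yields $X_1=O(e^{-2\eta/(4m+2)})$. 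In particular $X_1$ is absolutely integrable on $[\eta_0,\infty)$, and $a(\eta)=a(\eta_0)\exp\bigl(\int_{\eta_0}^{\eta}X_1\,d\tilde\eta\bigr)$ tends to a positive limit $a_\infty$. On the expanding side, $X_2,X_3,G\to 1/(4m+2)$ with exponentially small error gives $b\sim B_\infty e^{\eta/(4m+2)}$, $c\sim C_\infty e^{\eta/(4m+2)}$, and $\tilde W\sim W_\infty e^{\eta/(4m+2)}$; then $t=\int\tilde W\,d\tilde\eta\sim(4m+2)\tilde W\sim T_\infty e^{\eta/(4m+2)}$, so $b/t\to b_\infty:=B_\infty/T_\infty>0$ and $c/t\to c_\infty>0$.

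Collecting these asymptotics, the metric on the end has the form $dt^2+a_\infty^2 Q|_{\mathbb{I}}+t^2\bigl(b_\infty^2 Q|_{[\mathfrak{t}^2]_\mathbb{R}}+c_\infty^2 Q|_{\mathfrak{g}/\mathfrak{h}}\bigr)$ up to lower-order terms. The ratio $b_\infty^2/c_\infty^2=Y_3(P_{ALC-2})/Y_2(P_{ALC-2})=2/(m+1)$ singles out a specific $G$-invariant Einstein metric $g_N$ on $\mathbb{CP}^{2m+1}=G/L$ for which the cone $dt^2+t^2 g_N$ is Ricci-flat, while $a_\infty^2 Q|_{\mathbb{I}}$ contributes a constant circle fiber of length $2\pi a_\infty$; pointed rescaling $l^{-1}g_M$ then converges in the Gromov--Hausdorff sense to the $\mathbb{S}^1$-bundle over $C(\mathbb{CP}^{2m+1})$ demanded by the ALC definition. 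The main obstacle is to upgrade each ``$\sim$'' to uniform quantitative bounds sufficient for Gromov--Hausdorff convergence on balls of arbitrary radius; this amounts to a careful application of the stable manifold theorem together with the explicit decay rates above, but the bookkeeping is delicate because several of the relevant exponential rates share the common modulus $1/(4m+2)$.
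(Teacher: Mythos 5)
Your proof is correct and, at its core, does the same thing as the paper's: read off from the limit point $P_{ALC-2}$ that the $\mathbb{I}$-direction of the principal orbit stabilizes while the $[\mathfrak{t}^2]_\mathbb{R}$- and $\mathfrak{q}_-$-directions grow linearly, then match this to the ALC model. The paper's proof is much terser: it simply computes $\lim_{\eta\to\infty}X_2/Y_2$, $\lim_{\eta\to\infty}X_3/Y_3$, and $\lim_{\eta\to\infty}X_1Y_1/X_2=0$ to conclude $\dot a\to 0$, $\dot b,\dot c\to$ const, and then writes the limiting metric. What you add is genuinely useful. First, your reduction that a trajectory converging to $P_{ALC-2}$ must lie in $\partial\mathcal{E}$ (since $W'=W(G+\Lambda W^2)>0$ near $P_{ALC-2}$ when $W>0$) is a clean observation that the paper does not make explicit, and it lets you invoke the Ricci-flat recovery formulas directly. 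Second, and more substantively, the paper's argument only gives $\dot a\to 0$, which by itself does not show that $a$ converges to a finite positive constant $a_\infty$; your variation-of-constants estimate $Y_1=O(e^{-\eta/(4m+2)})$, hence $R_1=O(e^{-2\eta/(4m+2)})$, hence $X_1=O(e^{-2\eta/(4m+2)})$ is integrable, and $a(\eta)=a(\eta_0)\exp\bigl(\int_{\eta_0}^\eta X_1\,d\tilde\eta\bigr)\to a_\infty\in(0,\infty)$, does close this gap. You are right to flag that promoting pointwise asymptotics to pointed Gromov--Hausdorff convergence needs careful uniform bounds, and that the coincidence of several decay rates at modulus $1/(4m+2)$ makes the bookkeeping delicate; the paper glosses over this too. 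One small caution: in checking your final matching against the paper's displayed limit metric, note that the coefficient of $Q|_{\mathfrak{q}_-}$ should be $\dot c_\infty^2$ computed from $\dot c_\infty^2=\lim(X_3^2/(Y_2Y_3))$, not from $\lim(X_3/Y_3)$ directly.
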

\begin{proof}
By the assumption, we have 
$$
\lim_{t\to\infty}\dot{b}=\lim_{\eta\to \infty}\frac{X_2}{Y_2}=\frac{2}{m+1}\sqrt{\frac{(m+1)^2+m}{8m+2}},\quad \lim_{t\to\infty}\dot{c}=\lim_{\eta\to \infty}\frac{X_3}{Y_3}=\sqrt{\frac{(m+1)^2+m}{8m+2}},
$$
$$
\lim_{t\to\infty}\frac{\dot{a}}{\dot{b}}=\lim_{\eta\to \infty}\frac{X_1Y_1}{X_2}=0
$$
Hence it is necessary that $\lim\limits_{t\to\infty}\dot{a}=0$. The metric represented has asymptotic limit as
$$
dt^2+C\left.Q\right|_{\mathbb{I}}+t^2\left(  \frac{2((m+1)^2+m)}{(m+1)^2(4m+1)}  \left.Q\right|_{[\mathfrak{t}^2]_\mathbb{R}}+  \frac{(m+1)^2+m}{8m+2}\left.Q\right|_{\mathfrak{q}_-}\right)
$$
for some constant $C>0$
\end{proof}
\begin{proposition}
$P_{ALC-2}$ is a sink in $\left(\partial \mathcal{E},\left.V_{\Lambda \leq 0}\right|_{\mathcal{B}_{RF}}\right)$
\end{proposition}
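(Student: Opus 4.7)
The plan is to exploit the fact that the $2$-codimensional subset $\mathcal{B}_{ALC} = \mathcal{E} \cap \mathcal{P} \cap \{X_1 \equiv 0,\, Y_1 \equiv 0\}$ is flow-invariant and contains $P_{ALC-2}$. This gives a block decomposition of the linearization $\mathcal{L}(P_{ALC-2})|_{T\partial\mathcal{E}}$ into a \emph{transverse} $2\times 2$ block, corresponding to perturbations in the $X_1$ and $Y_1$ directions, and a \emph{tangential} $2\times 2$ block arising from the reduced 2-dimensional Ricci-flat flow on $\mathcal{B}_{ALC} \cap \partial\mathcal{E}$. Since $T_{P_{ALC-2}}\partial\mathcal{E}$ is $4$-dimensional, the eigenvalues of $\mathcal{L}|_{T\partial\mathcal{E}}$ are precisely the union of those of the two blocks.

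For the transverse block, the key observation is that at $P_{ALC-2}$ every partial derivative of $X_j'$ and $Y_j'$ (for $j=2,3$) with respect to $X_1$ and $Y_1$ vanishes. Indeed, each such right-hand side either contains an overall factor of $X_1$ or an $R_j$-term whose $Y_1$-dependence is purely quadratic (note $R_1 = \tfrac{1}{2}Y_1^2 Y_2^2 + \tfrac{m}{4}Y_1^2 Y_3^2$ has no linear-in-$Y_1$ piece). Hence the ambient Jacobian on $\mathbb{R}^6$ is block-diagonal, with the $(X_1, Y_1)$-block equal to $\diag(G-1,\, X_1 - X_2)$. Evaluating at $P_{ALC-2}$ where $G = 2X_2^2 + 4mX_3^2 = \tfrac{1}{4m+2}$ and $X_2 = \tfrac{1}{4m+2}$, this block becomes $\diag\!\left(-\tfrac{4m+1}{4m+2},\, -\tfrac{1}{4m+2}\right)$. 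Identifying the quotient $T\partial\mathcal{E}/T(\mathcal{B}_{ALC}\cap\partial\mathcal{E})$ with the $(X_1, Y_1)$-coordinates (these are free modulo compensations in the other variables to stay in $T\mathcal{E}$ and $T\partial\mathcal{E}$), these are precisely the transverse eigenvalues.

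For the tangential block, the reduced 2-dimensional Ricci-flat flow on $\mathcal{B}_{ALC} \cap \partial\mathcal{E}$ coincides with the cohomogeneity one Ricci-flat dynamics for the $2$-summand principal orbit $Sp(m+1)/Sp(m)U(1) \cong \mathbb{CP}^{2m+1}$, as already indicated before the statement and studied in \cite{wink_cohomogeneity_2017}\cite{bohm_non-compact_1999}. In that classical $2$-summand setting, $P_{ALC-2}$ is the critical point corresponding to the ALC Ricci-flat asymptotic limit and is a sink of the reduced flow. Alternatively, one can parameterize $\mathcal{B}_{ALC}\cap\partial\mathcal{E}$ by $(X_3, Y_3)$ using the two constraints $2X_2 + 4mX_3 = 1$ and $G + R_s = 1$, compute the induced $2\times 2$ Jacobian at $P_{ALC-2}$ directly, and verify that its trace is negative and determinant positive.

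Combining both blocks, all four eigenvalues of $\mathcal{L}(P_{ALC-2})|_{T\partial\mathcal{E}}$ have strictly negative real parts, proving the sink property. The main obstacle is the tangential block: while the block-diagonal reduction and the transverse calculation are essentially immediate, the tangential eigenvalue analysis either requires a careful appeal to the $2$-summand literature or a somewhat lengthy direct computation using the algebraically awkward coordinates $Y_2 = \tfrac{m+1}{8m+4}\sqrt{\tfrac{8m+2}{(m+1)^2+m}}$, $Y_3 = \tfrac{1}{4m+2}\sqrt{\tfrac{8m+2}{(m+1)^2+m}}$ of $P_{ALC-2}$.
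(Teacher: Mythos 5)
Your plan is attractive, but it rests on a factually incorrect claim, and the step that actually carries the weight of the argument is not justified.

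First, the assertion that the ambient Jacobian on $\mathbb{R}^6$ is \emph{block-diagonal} with respect to $(X_1,Y_1)$ versus $(X_2,X_3,Y_2,Y_3)$ is false. What you correctly observe is that $\partial X_j'/\partial X_1$ and $\partial X_j'/\partial Y_1$ vanish at $P_{ALC-2}$ for $j\neq 1$, and likewise for the $Y_j'$; this makes the Jacobian block \emph{lower}-triangular in the ordering $(X_1,Y_1,X_2,X_3,Y_2,Y_3)$. But the coupling in the other direction does not vanish: for instance, using $\Lambda W^2=\tfrac{G+R_s-1}{4m+2}$ on $\mathcal{E}$ one finds $\partial X_1'/\partial X_2 = -\partial(\Lambda W^2)/\partial X_2 = -\tfrac{4X_2}{4m+2} = -\tfrac{1}{(2m+1)^2}$ at $P_{ALC-2}$, and the remaining entries $\partial X_1'/\partial X_3$, $\partial X_1'/\partial Y_2$, $\partial X_1'/\partial Y_3$ are likewise nonzero (all visible in the explicit matrix in the paper). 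So there is a nontrivial $2\times 4$ coupling block $B$ from $(X_2,X_3,Y_2,Y_3)$ into $X_1'$.

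This matters because your ``transverse block equals $\diag(G-1,\,X_1-X_2)=\diag(-\tfrac{4m+1}{4m+2},\,-\tfrac{1}{4m+2})$'' does not then follow. Write $\mathcal{L}$ in block form $\begin{pmatrix}A & B\\ 0 & D\end{pmatrix}$. The induced map $\tilde A$ on $T\partial\mathcal{E}/T(\mathcal{B}_{ALC}\cap\partial\mathcal{E})$, identified with $(X_1,Y_1)$-coordinates via the projection $\pi$, satisfies $\tilde A\,\pi(v) = A\,\pi(v) + B\cdot(v_2,v_3,v_5,v_6)$ for $v\in T\partial\mathcal{E}$, so $\tilde A = A$ only if $B$ annihilates the $(X_2,X_3,Y_2,Y_3)$-components of every vector in $T\partial\mathcal{E}$. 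This \emph{does} turn out to be true here (the two constraints defining $T\partial\mathcal{E}$ force $v_2+2mv_3 = -\tfrac{1}{2}v_1$ and $(m^2+3m+1)\alpha\bigl(v_5+\tfrac{m}{2}v_6\bigr)=\tfrac{1}{2}v_1$, which cancel exactly against the entries of $B$), but it is a nontrivial algebraic coincidence that must be checked, not something that follows from the block observation alone. A second, independent obstacle: even a genuinely block-diagonal $\mathcal{L}=A\oplus D$ would not let you conclude, because $A$ and $D$ share the eigenvalue $-\tfrac{4m+1}{4m+2}$ (it appears with multiplicity two among the six ambient eigenvalues), so the invariant subspace $T\partial\mathcal{E}$ need not split as the direct sum of its intersections with the two summands; indeed $T\partial\mathcal{E}\cap\mathrm{span}\{e_{X_1},e_{Y_1}\}$ is only one-dimensional (it is spanned by $e_{Y_1}$, since $e_{X_1}$ violates $v_1+2v_2+4mv_3=0$), while $T\partial\mathcal{E}\cap\{X_1=Y_1=0\}=T(\mathcal{B}_{ALC}\cap\partial\mathcal{E})$ is two-dimensional, giving only three of the four dimensions.

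The paper's own proof avoids all of this: it computes all six eigenvalues of $\mathcal{L}(P_{ALC-2})$, observes that exactly one, $\lambda_6=\tfrac{1}{2m+1}$, is non-negative, computes the corresponding eigenvector $v_6$, and shows $\langle v_6, N_{\partial\mathcal{E}}(P_{ALC-2})\rangle\neq 0$ so that $v_6\notin T\partial\mathcal{E}$; since $\lambda_6$ is simple, it cannot be an eigenvalue of the restriction, and the remaining five eigenvalues are all negative. If you want to keep your structural approach, you must either verify the cancellation $B|_{T\partial\mathcal{E}}=0$ explicitly (at which point you have done as much computation as the direct method), or bypass the quotient entirely and argue as the paper does that the unique unstable eigenvector is not tangent to $\partial\mathcal{E}$.
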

\begin{proof}
We prove the proposition by computing the linearization of \eqref{Polynomial Einstein Equation} at $P_{ALC-2}$ and then show that all unstable eigenvectors are not tangent to $\mathcal{E}$. Let $\alpha=\sqrt{\frac{8m+2}{(m+1)^2+m}}$. The linearization of \eqref{Polynomial Einstein Equation pre} at this point is
\begin{equation}
\begin{split}
&\mathcal{L}(P_{ALC-2})\\
&=\begin{bmatrix}
-\frac{4m+1}{4m+2}&-\frac{1}{(2m+1)^2}&-\frac{2m}{(2m+1)^2}&0&-\frac{(m^2+3m+1)\alpha}{(2m+1)^2}&-\frac{(m^2+3m+1)m\alpha}{2(2m+1)^2}\\
0&-\frac{8m^3+10m^2+4m}{(2m+1)^3}&\frac{m}{(2m+1)^3}&0&\frac{(4m^3+3m^2+3m+1)\alpha}{2(2m+1)^3}&-\frac{(4m^4+5m^3-m^2-m)\alpha}{4(2m+1)^3}\\
0&\frac{1}{2(2m+1)^3}&-\frac{16m^3+20m^2+6m+1}{2(2m+1)^3}&0&-\frac{(m^2-2m-1)\alpha}{2(2m+1)^3}&\frac{(3m^3+6m^2+2m)\alpha}{4(2m+1)^3}\\
0&0&0&-\frac{1}{4m+2}&0&0\\
0&-\frac{(m+1)(2m^2-1)\alpha}{2(2m+1)^3}&\frac{(4m^3+7m^2+3m)\alpha}{2(2m+1)^3}&0&\frac{4m^2+5m+1}{2(2m+1)^3}&\frac{4m^3+5m^2+m}{4(2m+1)^3}\\
0&\frac{2(m+1)^2}{(2m+1)^3}&-\frac{(m+1)\alpha}{(2m+1)^3}&0&\frac{4m+1}{(2m+1)^3}&\frac{4m^2+m}{2(2m+1)^3}\\
\end{bmatrix}.
\end{split}
\end{equation}
Eigenvalues are the following. 
$$
\lambda_1=-\frac{1}{4m+2},\quad \lambda_2=\lambda_3=-\frac{4m+1}{4m+2},\quad \lambda_4=\rho_1,\quad \lambda_5=\rho_2,\quad \lambda_6=\frac{1}{2m+1}
$$
where $\rho_1<\rho_2<0$ are roots of 
$$
y=(8m^4+32m^3+34m^2+14m+2)x^2+(8m^4+30m^3+27m^2+9m+1)x+(4m^3+5m^2+m).
$$
Since $\mathcal{B}_{RF}$ is a 4-dimensional invariant set, four of the eigenvectors must be tangent to $\mathcal{B}_{RF}$. Since $\lambda_6$ is the only non-negative eigenvalue, in order to show that $P_{ALC-2}$ is a sink in $\left(\partial \mathcal{E},\left.V_{\Lambda \leq 0}\right|_{\mathcal{B}_{RF}}\right)$, it is sufficient to show that the eigenvector corresponds to $\lambda_6$ is not tangent to $\mathcal{B}_{RF}$. Indeed, computation shows that the eigenvector corresponds to $\lambda_6$ and normal vector field of $\partial \mathcal{E}$ at $P_{ALC-2}$ are  
$$v_6=
\begin{bmatrix}
-(4m+2)\sqrt{(m+1)^2+m}\\
\sqrt{(m+1)^2+m}\\
\sqrt{(m+1)^2+m}\\
0\\
(m+1)^2\sqrt{8m+2}\\
(2m+2)\sqrt{8m+2}
\end{bmatrix},\quad N_{\partial\mathcal{E}}(P_{ALC-2})=\begin{bmatrix}
0\\
\frac{2}{2m+1}\\
\frac{4m}{2m+1}\\
0\\
\frac{2}{2m+1}\sqrt{((m+1)^2+m)(8m+2)}\\
\frac{m}{2m+1}\sqrt{((m+1)^2+m)(8m+2)}\\
\end{bmatrix},
$$
which are not orthogonal. 
Hence the vector is not tangent to $\mathcal{\partial E}$. The proof is complete.
\end{proof}

It is straightforward to verify that the set of all $P_{AH}(y_1)=\left(\frac{1}{4m+3},\frac{1}{4m+3},\frac{1}{4m+3},y_1,0,0\right)$ is a 1-dimensional invariant set in the interior of $\mathcal{E}$.
For any fix $y_1$, we have 
\begin{equation}
\mathcal{L}(P_{AH}(y_1))=\begin{bmatrix}
-1&0&0&0&0&0\\
0&-1&0&0&0&0\\
0&0&-1&0&0&0\\
y_1&-y_1&0&0&0&0\\
0&0&0&0&-\frac{1}{4m+3}&0\\
0&0&0&0&0&-\frac{1}{4m+3}
\end{bmatrix}
\end{equation}
Eigenvectors, along with their respective eigenvalues, that are tangent to $\mathcal{C}\cap\mathcal{H}$ are the following. 
$$
\lambda_1=0,\quad \lambda_2=\lambda_3=-\frac{1}{4m+3},\quad\lambda_4=\lambda_5=-1
$$
$$
v_1=\begin{bmatrix}
0\\
0\\
0\\
1\\
0\\
0
\end{bmatrix},
v_2=\begin{bmatrix}
0\\
0\\
0\\
0\\
1\\
0\\
\end{bmatrix},
v_3=\begin{bmatrix}
0\\
0\\
0\\
0\\
0\\
1\\
\end{bmatrix},
v_4=\begin{bmatrix}
-2\\
1\\
0\\
3y_1\\
0\\
0
\end{bmatrix},
v_5=\begin{bmatrix}
-4m\\
-4m\\
3\\
0\\
0\\
0
\end{bmatrix}
$$
Therefore, $P_{AH}:=\{P_{AH}(y_1)\mid y_1\geq 0\}$ is a 1-dimensional invariant stable manifold.

We say a critical point $P$ is a $(p,q)$-saddle if $P$ has unstable direction of dimension $p$ and stable direction of dimension $q$.
In summary, we have the following lemma.
\begin{lemma}
\label{Critical in W=0}
In the subsystem of \eqref{Polynomial Einstein Equation} restricted on $\mathcal{B}_{RF}=\partial \mathcal{E}$:
\begin{enumerate}
\item $P_0$ is a $(2,2)$-saddle.
\item $P_{AC-1}$ is a $(2,2)$-saddle. $P_{AC-2}$ is a $(1,3)$-saddle.
\item $P_{ALC-2}$ is a sink.
\end{enumerate}
\end{lemma}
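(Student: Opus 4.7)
The plan is to identify the single eigendirection of $\mathcal{L}(P)$ transverse to the codimension-one invariant hypersurface $\partial\mathcal{E}$ at each of the four critical points, so that the spectrum of $\mathcal{L}(P)|_{T_P\partial\mathcal{E}}$ is obtained by deleting the corresponding eigenvalue from the already-computed spectrum on $T_P\mathcal{E}$ recorded in Sections 3.1--3.3.

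The key input is equation \eqref{boundary flow invariant}. Setting $F:=1-G-R_s$, so that $\partial\mathcal{E}=\{F=0\}\cap\mathcal{E}$, one has $F'=2F(G+\Lambda W^2)$. At any critical point $P\in\partial\mathcal{E}$ (in particular $W(P)=0$), local coordinates on $\mathcal{E}$ of the form $(u_1,\dots,u_4,F)$ with the $u_i$ parametrising $\partial\mathcal{E}$ render the Jacobian block-lower-triangular: the $F$-row of $\mathcal{L}(P)$ is $(0,\dots,0,2G(P))$ because the right-hand side of the $F$-equation has $F$ as a factor. Consequently the full spectrum on $T_P\mathcal{E}$ decomposes (with multiplicity) as the spectrum on $T_P\partial\mathcal{E}$ together with the singleton $\{2G(P)\}$, so removing one copy of $2G(P)$ from the known spectrum gives the restricted one.

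Evaluating $G$ at the four points yields $G(P_0)=\frac{1}{3}$, $G(P_{AC-1})=G(P_{AC-2})=\frac{1}{4m+3}$, and $G(P_{ALC-2})=\frac{1}{4m+2}$, so the transverse eigenvalues are $\frac{2}{3}$, $\frac{2}{4m+3}$, $\frac{2}{4m+3}$, and $\frac{1}{2m+1}$ respectively. Matching against the spectra from the preceding subsections: at $P_0$ the spectrum $\{\frac{2}{3},\frac{2}{3},\frac{2}{3},-\frac{2}{3},-\frac{4}{3}\}$ loses one copy of $\frac{2}{3}$, leaving two positive and two negative eigenvalues; at $P_{AC-1}$ the spectrum $\{\frac{2}{4m+3}^{\times 3},(-\frac{4m+4}{4m+3})^{\times 2}\}$ loses one copy of $\frac{2}{4m+3}$, again giving two positive and two negative eigenvalues; at $P_{AC-2}$ the spectrum $\{\frac{2}{4m+3},\rho_1,\rho_2,\sigma_1,\sigma_2\}$ with $\rho_1>0$ and $\rho_2,\sigma_1,\sigma_2<0$ loses $\frac{2}{4m+3}$ to leave one positive and three negative eigenvalues; and at $P_{ALC-2}$ the unique positive eigenvalue is exactly the transverse $\frac{1}{2m+1}$, so its removal leaves a strictly-negative spectrum. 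This last case is precisely the content of the proposition immediately preceding the lemma, which also furnishes an independent verification (via the eigenvector $v_6$ and $N_{\partial\mathcal{E}}$) that the $\frac{1}{2m+1}$-eigendirection is genuinely transverse to $\partial\mathcal{E}$.

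The only mild subtlety arises at $P_0$ and $P_{AC-1}$, where $2G(P)$ coincides with an eigenvalue of multiplicity three in $T_P\mathcal{E}$: one must remove exactly one copy, which is precisely what the codimension-one invariance enforces. No substantive obstacle is anticipated; the computation is routine bookkeeping once the transverse eigenvalue has been extracted via \eqref{boundary flow invariant}.
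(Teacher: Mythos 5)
Your argument is correct and is a genuinely different route than the paper's. The paper identifies the tangent space $T_P\partial\mathcal{E}$ by computing the normal vector $N_{\partial\mathcal{E}}$ at each critical point and testing each of the previously-listed eigenvectors for orthogonality against it (this is spelled out at $P_0$, where $v_1, v_2 \perp N_{\partial\mathcal{E}}(P_0)$ is verified, and at $P_{ALC-2}$, where $v_6 \not\perp N_{\partial\mathcal{E}}(P_{ALC-2})$ is verified; the other cases are left implicit). Your approach instead reads the transverse eigenvalue directly off the structural identity \eqref{boundary flow invariant}: writing $F = 1 - G - R_s$, the factorisation $F' = 2F(G + \Lambda W^2)$ and $W(P)=0$ on $\partial\mathcal{E}$ immediately give the $F$-row $(0,\dots,0,2G(P))$ in adapted coordinates, whence the spectrum on $T_P\mathcal{E}$ decomposes as (spectrum on $T_P\partial\mathcal{E}$) $\sqcup \{2G(P)\}$. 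Evaluating $G$ at the four points and deleting one copy of $2G(P)$ from the five eigenvalues already tabulated on $T_P\mathcal{E}$ then gives the Lemma in one stroke. This spares you the normal-vector computations entirely and, in particular, cleanly handles the multiplicity-three eigenvalue at $P_0$ and $P_{AC-1}$, since the block structure forces removal of exactly one copy. One trivial slip: with the $F$-row of the form $(0,\dots,0,2G(P))$ the Jacobian is block \emph{upper}-triangular, not lower-triangular; this has no bearing on the spectral conclusion. At $P_{ALC-2}$ it is also worth stating explicitly that $2G(P_{ALC-2}) = \lambda_6$ is necessarily a member of the $T_P\mathcal{E}$-spectrum (precisely because it is the transverse eigenvalue supplied by your block structure), which together with $\lambda_6$ being the unique non-negative eigenvalue of the full $6\times 6$ linearisation yields the sink conclusion without appealing separately to the eigenvector computation of the preceding proposition; citing that proposition as independent corroboration, as you do, is perfectly fine.
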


\begin{lemma}
\label{Critical in W neq 0}
In system of \eqref{Polynomial Einstein Equation} on $\mathcal{E}$:
\begin{enumerate}
\item $P_0$ is a $(3,2)$-saddle.
\item $P_{AC-1}$ is a $(3,2)$-saddle. $P_{AC-2}$ is a $(2,3)$-saddle.
\item $P_{ALC-2}$ is a $(1,4)$-saddle.
\item $P_{AH}$ is a 1-dimensional stable manifold.
\end{enumerate}
\end{lemma}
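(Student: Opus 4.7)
The plan is to read off the unstable and stable dimensions directly from the linearization data already assembled in Subsections 3.1--3.3 for each of the five critical points $P_0$, $P_{AC-1}$, $P_{AC-2}$, $P_{ALC-2}$, and $P_{AH}$, together with one structural observation that bridges the present statement to Lemma \ref{Critical in W=0}.

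For items (1)--(3), each of the critical points $P_0$, $P_{AC-i}$ ($i=1,2$), $P_{ALC-2}$ lies on $\partial\mathcal{E}$, so $T_P\mathcal{E}$ splits as $T_P(\partial\mathcal{E})$ plus a one-dimensional complement pointing into $\mathrm{int}(\mathcal{E})$. Setting $f:=1-G-R_s$, equation \eqref{boundary flow invariant} yields $f'=2f(G+\Lambda W^2)$, so linearising at a boundary critical point (where $f=0$ and $W=0$) produces a normal eigenvalue equal to $2G|_P$. Direct substitution gives $2G|_{P_0}=2/3$, $2G|_{P_{AC-i}}=2/(4m+3)$, and $2G|_{P_{ALC-2}}=1/(2m+1)$, each strictly positive and each matching an eigenvalue already displayed in Subsections 3.1--3.3 whose eigenvector lies in $T_P\mathcal{E}$ but not in $T_P(\partial\mathcal{E})$; the distinguished eigenvectors are $v_3$ at $P_0$, $v_1$ at $P_{AC-i}$, and $v_6$ at $P_{ALC-2}$, as confirmed by explicit pairing against the $N_{\partial\mathcal{E}}$ expressions in those subsections. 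Hence each saddle count in Lemma \ref{Critical in W=0} picks up exactly one additional unstable direction, yielding $(2,2)\to(3,2)$ for $P_0$ and $P_{AC-1}$, $(1,3)\to(2,3)$ for $P_{AC-2}$, and the sink $(0,4)\to(1,4)$ for $P_{ALC-2}$.

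For item (4), the point $P_{AH}(y_1)$ lies in $\mathrm{int}(\mathcal{E})$ and belongs to a one-parameter family of critical points, so no boundary bridge is required. The block-triangular matrix $\mathcal{L}(P_{AH}(y_1))$ recorded in Subsection 3.3 has six eigenvalues $0,-1,-1,-1,-1/(4m+3),-1/(4m+3)$; restricting to $T_P\mathcal{E}$ removes exactly one of the $(-1)$-eigenvectors, leaving the five tangent eigenvalues listed in the subsection. The zero eigenvector $v_1=(0,0,0,1,0,0)^T$ is tangent to the curve $\{P_{AH}(y_1):y_1\geq 0\}$, while the remaining four eigenvalues are strictly negative and span a transverse attracting subspace. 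This realises $P_{AH}$ as a one-dimensional invariant manifold of critical points with purely stable normal dynamics.

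The only step that deserves genuine care is the verification, at each of the four boundary critical points, that the distinguished eigenvector with eigenvalue $2G|_P$ indeed lies in $T_P\mathcal{E}\setminus T_P(\partial\mathcal{E})$; this reduces to checking that it is annihilated by $N_{\mathcal{E}}$ but not by $N_{\partial\mathcal{E}}$, which is immediate from the explicit eigenvector and normal-vector data already computed in Subsections 3.1--3.3 and so does not present a real obstacle.
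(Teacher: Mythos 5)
Your proposal is correct, and the structural bridge you use---deriving the transverse eigenvalue $2G|_P$ from the invariance equation $f'=2f(G+\Lambda W^2)$ for $f:=1-G-R_s$, then adding one unstable direction to each saddle count in Lemma \ref{Critical in W=0}---is a genuine refinement of what the paper does. The paper itself states both Lemma \ref{Critical in W=0} and Lemma \ref{Critical in W neq 0} without a proof environment, relying on the reader to read off the signs of the eigenvalues tabulated in Subsections 3.1--3.3 and to check orthogonality against $N_{\partial\mathcal{E}}$ case by case; your route replaces the case-by-case identification of the transverse eigenvalue with a single one-line computation, and the observation that $2G|_{P_0}=2/3$, $2G|_{P_{AC-i}}=2/(4m+3)$, $2G|_{P_{ALC-2}}=1/(2m+1)$ all happen to coincide with eigenvalues already displayed is exactly the consistency check one wants. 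What your argument buys is that no further eigenvector computation is needed at all to pass from the $\partial\mathcal{E}$-count to the $\mathcal{E}$-count; what the paper's enumeration buys is that it does not presuppose Lemma \ref{Critical in W=0}.

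One small slip: you name $v_1$ as the distinguished eigenvector at $P_{AC-1}$, but pairing the displayed eigenvectors against $N_{\partial\mathcal{E}}(P_{AC-1})$ shows that $v_1\cdot N_{\partial\mathcal{E}}(P_{AC-1})=0$ and $v_2\cdot N_{\partial\mathcal{E}}(P_{AC-1})=0$ (this is also implicit in the paper's remark that $\gamma_{(s_1,s_2,0)}$ stays in $\mathcal{B}_{RF}$), while $v_3\cdot N_{\partial\mathcal{E}}(P_{AC-1})=-(2m+1)(4m+3)\sqrt{2}\neq 0$. So the distinguished eigenvector at $P_{AC-1}$ is $v_3$, not $v_1$; at $P_{AC-2}$ and $P_{ALC-2}$ your identification ($v_1$ and $v_6$ respectively) is correct. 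Since all three eigenvectors $v_1,v_2,v_3$ at $P_{AC-1}$ share the eigenvalue $2/(4m+3)=2G|_{P_{AC-1}}$, the slip does not affect the $(3,2)$-saddle conclusion, but the ``confirmed by explicit pairing'' clause should be corrected.
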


\begin{remark}
It is worth mentioning that linearizations at $P_0$, $P_{AC-1}$ and $P_{AC-2}$ can be carried over to the compact case where $\Lambda>0$. The short existing integral curves correspond to positive Einstein metrics on the tubular neighborhood around $\mathbb{HP}^m$ or the origin of $\mathbb{R}^{4m+4}$. In \cite{page_einstein_1986}, numerical analysis indicates that there exists an inhomogeneous Einstein metric on $\mathbb{HP}^{m+1}\# \overline{\mathbb{HP}^{m+1}}$. If such a metric does exist, its restriction on the neighborhood around $\mathbb{HP}^m$ is represented by some integral curve that emanates from $P_0$. For a cohomogeneity one Einstein metric on $\mathbb{HP}^{m+1}\# \overline{\mathbb{HP}^{m+1}}$, the trace of the shape operator is supposed to vanish at some $t_*>0$. Therefore, one may need some other coordinate change in order to construct positive cohomogeneity one Einstein metrics.
\end{remark}

\section{Compact Invariant Set}
\label{Compact invariant set}
This section is dedicated to constructing a compact invariant set that contains critical points studied above in its boundary.
\begin{proposition}
\label{Set construction 1}
Let 
$$\mathcal{A}_1=\left\{(X_1,X_2,X_3,Y_1,Y_2,Y_3)\mid X_1- X_2\leq 0, \quad Y_1^2\leq 2\right\}$$
The set
$\mathcal{E}\cap \mathcal{P} \cap\mathcal{A}_1$ is flow-invariant.
\end{proposition}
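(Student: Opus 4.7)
The plan is to verify flow-invariance by checking the two constraints defining $\mathcal{A}_1$ separately at the boundary of the region, using the fact already established in the paper that $\mathcal{E} \cap \mathcal{P}$ is itself flow-invariant. So the task reduces to showing that on each boundary face of $\mathcal{A}_1$ inside $\mathcal{E} \cap \mathcal{P}$, the vector field $V_{\Lambda \leq 0}$ points into $\mathcal{A}_1$ (or is tangent to its boundary).

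First I would compute the derivative of $X_1 - X_2$ along the flow. Directly from \eqref{Polynomial Einstein Equation},
\begin{equation*}
(X_1 - X_2)' = (X_1 - X_2)(G + \Lambda W^2 - 1) + (R_1 - R_2).
\end{equation*}
Using the explicit formulas for $R_1$ and $R_2$, a short computation yields the key identity
\begin{equation*}
R_1 - R_2 = \tfrac{1}{2}Y_1^2 Y_2^2 + \tfrac{m}{4}Y_1^2 Y_3^2 - 2Y_2^2 + \tfrac{1}{2}Y_1^2 Y_2^2 - \tfrac{m}{2}Y_3^2 = (Y_1^2 - 2)\left(Y_2^2 + \tfrac{m}{4}Y_3^2\right).
\end{equation*}
Hence on the face $\{X_1 - X_2 = 0\} \cap \mathcal{A}_1 \cap \mathcal{P}$ (where $Y_1^2 \leq 2$ and $Y_2, Y_3 \geq 0$), we obtain $(X_1 - X_2)' = (Y_1^2 - 2)(Y_2^2 + \tfrac{m}{4}Y_3^2) \leq 0$, so trajectories cannot exit through this face.

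Next, I would check the face $\{Y_1^2 = 2\}$. From \eqref{Polynomial Einstein Equation}, $Y_1' = Y_1(X_1 - X_2)$, so
\begin{equation*}
(Y_1^2)' = 2Y_1^2 (X_1 - X_2).
\end{equation*}
On $\{Y_1^2 = 2\} \cap \mathcal{A}_1$ we have $X_1 - X_2 \leq 0$, giving $(Y_1^2)' \leq 0$. Thus trajectories cannot exit through this second face either.

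The two inward-pointing conditions on the two boundary faces, combined with the previously established flow-invariance of $\mathcal{E} \cap \mathcal{P}$, imply flow-invariance of $\mathcal{E} \cap \mathcal{P} \cap \mathcal{A}_1$. I do not expect any genuine obstacle here; the one place that needs care is the algebraic identity $R_1 - R_2 = (Y_1^2 - 2)(Y_2^2 + \tfrac{m}{4}Y_3^2)$, which is what makes the two constraints work in tandem — each boundary condition's time derivative is controlled precisely by the other constraint. This coupling is the real content of the proposition.
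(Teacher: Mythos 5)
Your argument is correct and follows essentially the same route as the paper's proof: check the two boundary faces $\{X_1 - X_2 = 0\}$ and $\{Y_1^2 = 2\}$ separately, with the key algebraic identity $R_1 - R_2 = (Y_1^2 - 2)\bigl(Y_2^2 + \tfrac{m}{4}Y_3^2\bigr)$ controlling the first face via the constraint $Y_1^2 \leq 2$, and $(Y_1^2)' = 2Y_1^2(X_1 - X_2)$ controlling the second via $X_1 - X_2 \leq 0$. The paper states the inequality $\langle\nabla(X_1-X_2),V_{\Lambda\leq 0}\rangle|_{X_1-X_2=0} = (X_1-X_2)(G+\Lambda W^2-1)+\tfrac{m}{4}Y_3^2(Y_1^2-2)+Y_2^2(Y_1^2-2) \leq 0$ directly, which is exactly your identity; no meaningful difference in approach.
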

\begin{proof}
Computation shows that
\begin{equation}
\begin{split}
\langle\nabla(Y_1^2),V_{\leq 0}\rangle\mid_{Y_1^2=2}&=2Y_1^2(X_1-X_2)\leq 0
\end{split}
\end{equation}
in $\mathcal{E}\cap\mathcal{A}_1$.
Moreover, we have
\begin{equation}
\label{X_1-X_2}
\begin{split}
&\langle\nabla(X_1-X_2),V_{\leq 0}\rangle\mid_{X_1-X_2=0}\\
&=(X_1-X_2)(G+\Lambda W^2-1)+\frac{m}{4}Y_3^2(Y_1^2-2)+Y_2^2(Y_1^2-2)\\
&\leq 0
\end{split}
\end{equation}
in $\mathcal{E}\cap\mathcal{A}_1$. The proof is complete.
\end{proof}

Define
\begin{equation}
\begin{split}
&\mathcal{A}_2\\
&=\left\{(X_1,X_2,X_3,Y_1,Y_2,Y_3)\mid 2Y_2-Y_3\geq 0, \frac{\sqrt{2}
}{2}(2Y_2-Y_3)+X_3-X_2\geq 0, X_2\leq \frac{1}{2}, X_3\geq 0 \right\}.
\end{split}
\end{equation}
We want to show that the set
$\mathcal{S}:=\mathcal{E}\cap\mathcal{P}\cap \mathcal{A}_1\cap\mathcal{A}_2$ is a flow-invariant compact set. We prove the compactness first.
\begin{figure}[h!]
\centering
\begin{subfigure}{.3\textwidth}
  \centering
  \includegraphics[width=1\linewidth]{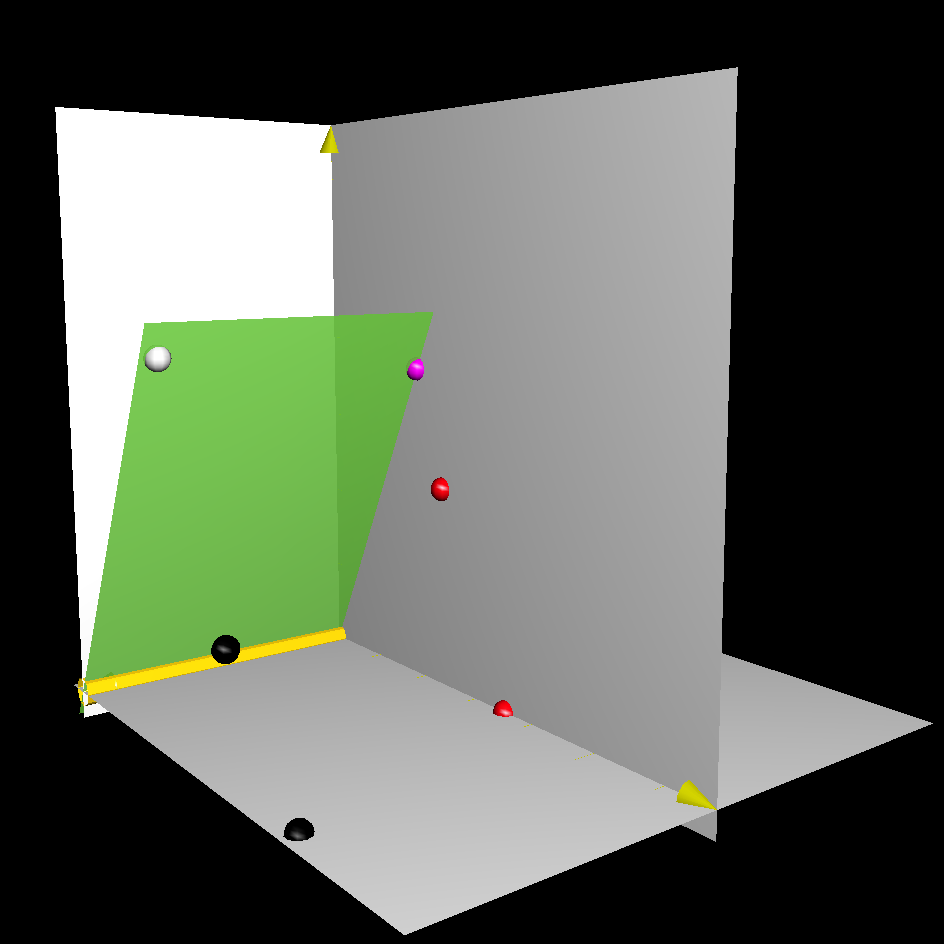}
  \caption{$2Y_2-Y_3\geq 0$}
  \label{mathcalS1}
\end{subfigure}
\begin{subfigure}{.3\textwidth}
  \centering
  \includegraphics[width=1\linewidth]{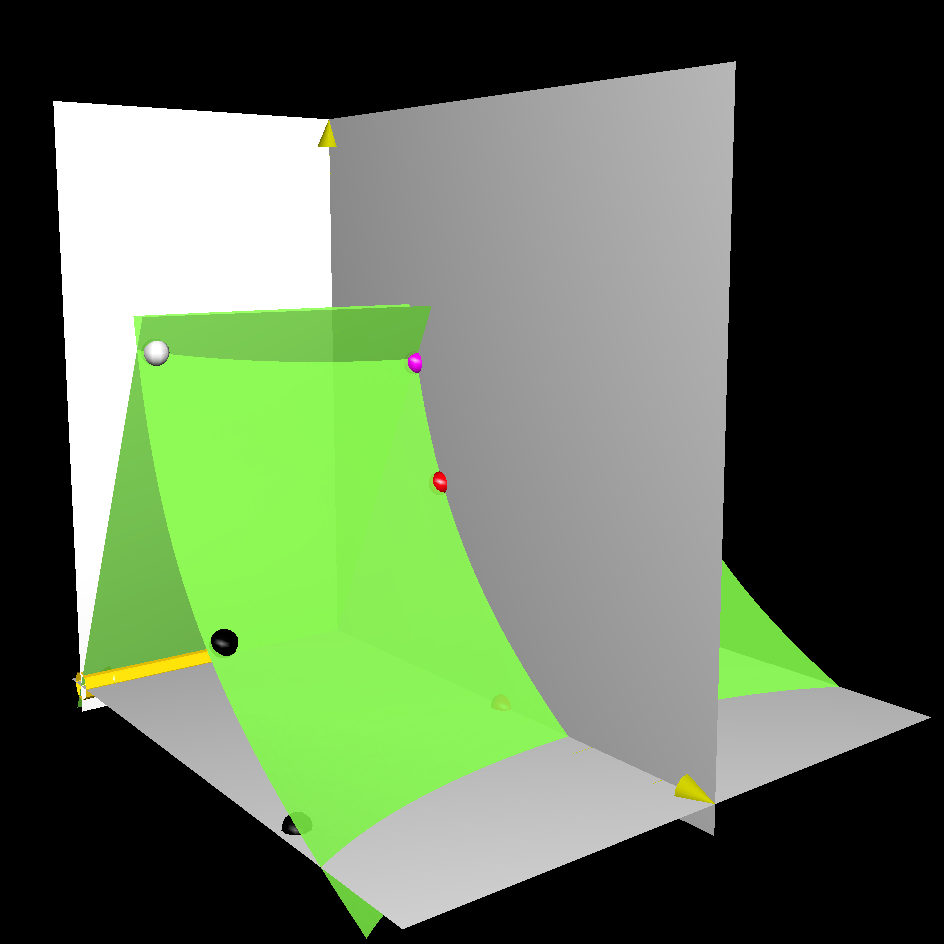}
  \caption{$G\geq \frac{1}{4m+3}$}
  \label{fmathcalS2}
\end{subfigure}
\begin{subfigure}{.3\textwidth}
  \centering
  \includegraphics[width=1\linewidth]{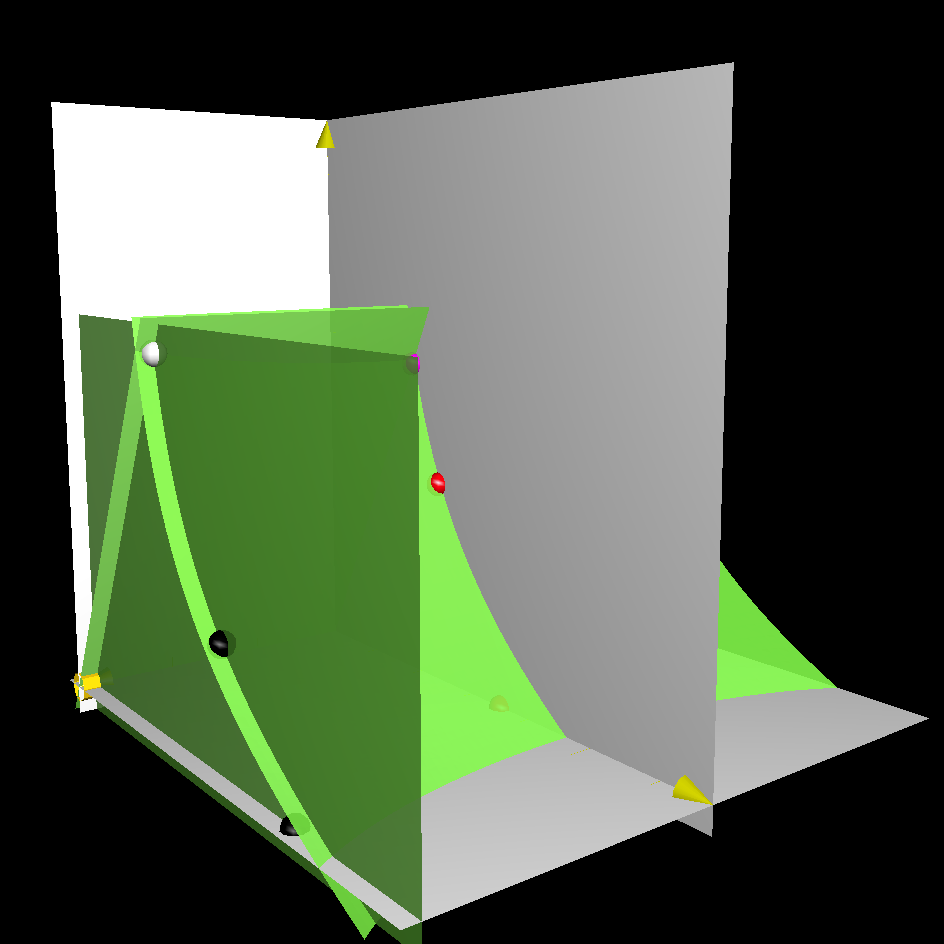}
  \caption{$Y_1\leq \sqrt{2}$}
  \label{mathcalS3}
\end{subfigure}
\caption{``Picture proof'' of the Compactness of $\mathcal{S}$}
\label{mathcalS}
\end{figure}

\begin{proposition}
The set $\mathcal{S}$ is compact.
\end{proposition}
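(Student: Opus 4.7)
Since $\mathcal{S}$ is the finite intersection of the preimages of closed sets $\{\geq 0\}$, $\{\leq 0\}$ and $\{=0\}$ under continuous polynomial functions, it is automatically closed in $\mathbb{R}^6$. The whole task, therefore, is to show boundedness, and the three sub-figures in Figure \ref{mathcalS} suggest the three ingredients to combine.

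The key preliminary step would be to bound $G$ from below. Applying the Cauchy--Schwarz inequality to the vectors $(X_1,\sqrt{2}X_2,\sqrt{4m}X_3)$ and $(1,\sqrt{2},\sqrt{4m})$, the defining hyperplane $X_1+2X_2+4mX_3=1$ forces
\begin{equation*}
1 \;=\; (X_1+2X_2+4mX_3)^2 \;\leq\; (1+2+4m)\,(X_1^2+2X_2^2+4mX_3^2) \;=\; (4m+3)\,G,
\end{equation*}
so $G\geq \frac{1}{4m+3}$ throughout $\mathcal{H}$. Combined with the constraint $1-G-R_s\geq 0$ defining $\mathcal{E}$, this yields the uniform upper bound $R_s\leq \frac{4m+2}{4m+3}$ on $\mathcal{S}$.

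Next I would exploit $\mathcal{A}_1\cap\mathcal{A}_2$ to bound each $R_i$ \emph{from below} by a nonnegative quadratic form in the $Y$-variables. The estimate $Y_1^2\leq 2$ immediately gives $R_2=2Y_2^2+\tfrac{m}{2}Y_3^2-\tfrac{1}{2}Y_1^2Y_2^2\geq Y_2^2+\tfrac{m}{2}Y_3^2$ and $-\tfrac{1}{8}Y_1^2Y_3^2\geq -\tfrac{1}{4}Y_3^2$. Using in addition $2Y_2\geq Y_3$ (so $Y_2Y_3\geq \tfrac{1}{2}Y_3^2$) we get
\begin{equation*}
R_3 \;\geq\; \tfrac{m+2}{2}Y_3^2 - \tfrac{3}{4}Y_3^2 \;=\; \tfrac{2m+1}{4}Y_3^2,
\end{equation*}
which is nonnegative since $m\geq 1$. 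Because $R_1\geq 0$ is trivial, we then have
\begin{equation*}
R_s \;\geq\; 2Y_2^2+m(2m+1)Y_3^2,
\end{equation*}
and comparing with the upper bound from the previous paragraph yields $Y_2^2\leq \tfrac{2m+1}{4m+3}$ and $Y_3^2\leq \tfrac{2}{m(4m+3)}$. Combined with $Y_1\leq \sqrt{2}$, the $Y$-coordinates are bounded.

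Finally, the same inequalities show $R_s\geq 0$ on $\mathcal{S}$, so $G\leq 1$, which bounds each $X_i$ individually (in fact $|X_1|\leq 1$, $|X_2|\leq \tfrac{1}{\sqrt{2}}$, $|X_3|\leq \tfrac{1}{2\sqrt{m}}$). Hence $\mathcal{S}$ is bounded and therefore compact. The only non-mechanical point is the combination of $Y_1^2\leq 2$ with $2Y_2\geq Y_3$ to guarantee that $R_3$ is a \emph{positive} multiple of $Y_3^2$; everything else is bookkeeping on the Cauchy--Schwarz bound and on the definition of $\mathcal{E}$.
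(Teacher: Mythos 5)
Your proof is correct and follows essentially the same strategy as the paper's: use the Cauchy--Schwarz bound $G\geq\frac{1}{4m+3}$ coming from $X_1+2X_2+4mX_3=1$, and use $Y_1^2\leq 2$ together with $2Y_2\geq Y_3$ to bound $R_s$ from below by a positive-definite quadratic form in $(Y_2,Y_3)$, which combined with $G+R_s\leq 1$ bounds the $Y$-coordinates. The only cosmetic difference is that you bound each $R_i$ separately whereas the paper expands $R_s$ wholesale, and you are a bit more explicit about the consequent bound on the $X_i$; both computations lead to the same conclusion.
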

\begin{proof}
From \eqref{Polynomial Conservation Law}, it is clear that the compactness is proven once we can show that $Y_i$'s are bounded above. By the definition of $\mathcal{A}_1$, we know that $Y_1$ is bounded above. By the definition of $\mathcal{A}_2$, we know that $Y_3$ is bounded above by $2Y_2$. From the definition of $\mathcal{E}$, we have
\begin{equation}
\label{upper bound Y2}
\begin{split}
1&\geq G+R_s\\
&= G+4Y_2^2-mY_3^2 +4m(m+2)Y_2Y_3-\frac{1}{2}Y_1^2Y_2^2-\frac{m}{4}Y_1^2Y_3^2\\
&\geq  G+3Y_2^2-mY_3^2 +4m(m+2)Y_2Y_3-\frac{m}{2}Y_3^2\quad \text{since $Y_1^2\leq 2$}\\
&\geq \frac{1}{4m+3}+3Y_2^2+\left(2m^2+\frac{5m}{2}\right)Y_3^2\quad \text{since $2Y_2\geq Y_3$}\\
&\geq \frac{1}{4m+3}+3Y_2^2
\end{split}
\end{equation}
Hence $Y_2^2<\frac{1}{3}$. The proof is complete. An illustration of the projection of $\mathcal{S}$ on $Y$-space is given in Figure \ref{mathcalS}.
\end{proof}

Before we prove that $\mathcal{S}$ is flow-invariant, we need to prove the following technical proposition.
\begin{proposition}
\label{technical comparison}
If $\frac{\sqrt{2}}{2}(2Y_2-Y_3)+X_3-X_2=0$ on $\mathcal{S}$, then 
$$
\frac{\sqrt{2}}{4}+\frac{m-1}{2}Y_3-Y_2+\frac{1}{8}Y_1^2\left(2Y_2+Y_3\right)\geq 0$$ on $\mathcal{S}.$
\end{proposition}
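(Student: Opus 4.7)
My plan is to introduce the auxiliary variable $u := 2Y_2 - Y_3 \geq 0$. The defining inequalities $X_3 \geq 0$ and $X_2 \leq 1/2$ of $\mathcal{A}_2$, combined with the hypothesis $X_2 - X_3 = \frac{\sqrt{2}}{2}u$, immediately give $u \leq \frac{\sqrt{2}}{2}$. Substituting $Y_2 = (u + Y_3)/2$ and $2Y_2 + Y_3 = u + 2Y_3$ into the claim and multiplying by $8$ rewrites the target inequality as
$$
2\sqrt{2} \;-\; (4 - Y_1^2)\,u \;+\; 2(2m - 4 + Y_1^2)\,Y_3 \;\geq\; 0.
$$
For $m \geq 2$ both potentially problematic signs are favorable: the $Y_3$-coefficient $2(2m-4+Y_1^2)$ is non-negative, and $Y_1^2 \geq 0$ gives $(4 - Y_1^2)u \leq 4 \cdot \frac{\sqrt{2}}{2} = 2\sqrt{2}$, so the inequality is immediate.

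The case $m = 1$ is delicate because the $Y_3$-coefficient $2(Y_1^2 - 2) \leq 0$ contributes negatively. Here my plan is to observe that the left-hand side above is non-decreasing in $Y_1^2$ (its slope $u + 2Y_3 = 2Y_2 + Y_3$ being non-negative), so it suffices to verify the inequality on the lower boundary of the feasible region in the $Y_1$-direction---either on the slice $Y_1 = 0$, where it reduces to $Y_2 \leq \frac{\sqrt{2}}{4}$, or on the locus where the conservation law $G + R_s = 1$ is tight. In both regimes I would invoke the sharp lower bound $G_{\min}(u)$ of $G$ over admissible $X$-values satisfying $X_1 + 2X_2 + 4X_3 = 1$, $X_1 \leq X_2$, $X_3 \geq 0$, $X_2 \leq 1/2$, $X_2 - X_3 = \frac{\sqrt{2}}{2}u$. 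A Lagrange analysis splits into an interior regime for $u \leq \frac{3\sqrt{2}}{7}$ and a boundary regime $X_3 = 0$ for larger $u$, yielding explicit quadratic formulas for $G_{\min}(u)$; inserting these into the conservation law and completing the square in $(u, Y_3)$ produces an ellipse estimate which, restricted to $Y_3 \geq 0$, delivers exactly $2Y_2 = u + Y_3 \leq \frac{\sqrt{2}}{2}$.

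The main obstacle is precisely the $m = 1$ case: for $m \geq 2$ the geometric $X$-constraints alone suffice, but for $m = 1$ the crude compactness bound $Y_2 < 1/\sqrt{3}$ from Proposition~4.2 is too weak, and one must actively deploy the conservation law through the sharp minimizer $G_{\min}(u)$. The two-sub-case Lagrange analysis (interior minimum versus $X_3 = 0$ boundary) is the technical heart of the argument.
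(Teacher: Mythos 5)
Your $m\geq 2$ case is correct and is a genuinely different (and simpler) route than the paper's: it uses only the $X$-constraints of $\mathcal{S}$ — $X_3\geq 0$, $X_2\leq \tfrac12$, $Y_1^2\leq 2$ — together with the hypothesis, and never touches the conservation law. That's a nice observation. However, the $m=1$ case, which you yourself flag as the technical heart, is only a plan and not an executed proof. You announce a Lagrange minimization of $G$ over the admissible $X$-polytope to get $G_{\min}(u)$ in two regimes, followed by an ``ellipse estimate,'' but none of this is carried out, and it is not clear without doing the work that the bound you get closes the argument. Until that computation is actually done and the inequality $u+Y_3\leq\tfrac{\sqrt2}{2}$ (equivalently $Y_2\leq\tfrac{\sqrt2}{4}$) is actually derived, the $m=1$ case is a gap.

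Separately, the sharp $G_{\min}(u)$ is more machinery than is needed. The paper's proof works for all $m\geq 1$ at once and avoids Lagrange multipliers entirely: from the conservation law one has $1\geq G+R_s\geq 2X_2^2+R_s$, and the hypothesis $X_2=X_3+\tfrac{\sqrt2}{2}(2Y_2-Y_3)$ together with $X_3\geq 0$ gives $2X_2^2\geq (2Y_2-Y_3)^2$. Plugging this into $1\geq(2Y_2-Y_3)^2+R_s$ and using $2Y_2\geq Y_3$, $Y_1^2\leq 2$ yields
\[
\left(8-\tfrac12 Y_1^2\right)Y_2^2\leq 1,
\]
which in turn gives $\tfrac{\sqrt2}{4}-Y_2+\tfrac14 Y_1^2 Y_2\geq 0$; the target then follows immediately since the remaining terms $\tfrac{m-1}{2}Y_3+\tfrac18 Y_1^2 Y_3$ are nonnegative. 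I'd encourage you to notice that the crude bound $G\geq 2X_2^2$, amplified by the hypothesis, already carries the day — you don't need the sharp minimizer, and you don't need the $m=1$/$m\geq 2$ case split or the reduction in the $Y_1$-direction at all. As it stands, your reduction-to-$Y_1=0$ argument (and the corresponding monotonicity observation) is sound in principle, but the slice $Y_1=0$ still needs the conservation-law bound $Y_2\leq\tfrac{\sqrt2}{4}$, and that bound is exactly where the proof has to do real work that you haven't supplied.
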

\begin{proof}
If $\frac{\sqrt{2}}{2}(2Y_2-Y_3)+X_3-X_2=0$, the by \eqref{Polynomial Conservation Law}, we have
\begin{equation}
\begin{split}
1&=X_1^2+2X_2^2+4mX_3^2-(4m+2)\Lambda W^2\\
&\quad +4Y_2^2-mY_3^2 +4m(m+2)Y_2Y_3-\frac{1}{2}Y_1^2Y_2^2-\frac{m}{4}Y_1^2Y_3^2\\
&\geq 2\left(X_3+\frac{\sqrt{2}}{2}(2Y_2-Y_3)\right)^2\\
&\quad +4Y_2^2-mY_3^2 +4m(m+2)Y_2Y_3-\frac{1}{2}Y_1^2Y_2^2-\frac{m}{4}Y_1^2Y_3^2.
\end{split}
\end{equation}
Since $X_3\geq 0$ and $2Y_2-Y_3\geq 0$ in $\mathcal{S}$, we can drop terms with $X_3$ above. The computation continues as
\begin{equation}
\begin{split}
1&\geq (2Y_2-Y_3)^2\\
&\quad+4Y_2^2-mY_3^2 +4m(m+2)Y_2Y_3-\frac{1}{2}Y_1^2Y_2^2-\frac{m}{4}Y_1^2Y_3^2\\
&=\left(8-\frac{1}{2}Y_1^2\right)Y_2^2+\left(1-m-\frac{m}{4}Y_1^2\right)Y_3^2+(4m(m+2)-4)Y_2Y_3\\
&\geq \left(8-\frac{1}{2}Y_1^2\right)Y_2^2+\left(2m^2+\frac{5}{2}m-1\right)Y_3^2\quad   \text{since $2Y_2-Y_3\geq 0$ and $Y_1^2\leq 2$}\\
&\geq  \left(8-\frac{1}{2}Y_1^2\right)Y_2^2 \quad  \text{as coefficient for $Y_3^2$ is positive}
\end{split}.
\end{equation}
Since $Y_1^2\leq 2$, we know that 
$$Y_2^2\leq\frac{1}{8-\frac{1}{2}Y_1^2}$$ in $\mathcal{S}$ if $\frac{\sqrt{2}}{2}(2Y_2-Y_3)+X_3-X_2=0$ holds. Moreover, the inequality above implies
$$
\left(\frac{\sqrt{2}}{4}\frac{1}{1-\frac{1}{4}Y_1^2}\right)^2\geq \frac{1}{8-\frac{1}{2}Y_1^2}\geq Y_2^2
$$
as $Y_1^2\leq 2.$
Hence 
$$
\frac{\sqrt{2}}{4}-Y_2+\frac{1}{4}Y_1^2Y_2\geq 0
$$
Therefore.
$$\frac{\sqrt{2}}{4}+\frac{m-1}{2}Y_3-Y_2+\frac{1}{8}Y_1^2\left(2Y_2+Y_3\right)\geq 0
$$
on $\mathcal{S}$.
\end{proof}

\begin{lemma}
The compact set $\mathcal{S}$ is flow-invariant.
\end{lemma}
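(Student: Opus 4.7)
The plan is to verify flow-invariance by checking one by one each boundary piece of $\mathcal{S}$ and showing that the relevant coordinate function's derivative points inward. Since Proposition~\ref{Set construction 1} already handles $\{X_1-X_2=0\}$ and $\{Y_1^2=2\}$, and since the ODEs give $Y_i' = Y_i\cdot(\ldots)$ so that $\{Y_i=0\}$ is automatically invariant, it remains to inspect the four boundary hypersurfaces coming from $\mathcal{A}_2$: $\{X_3=0\}$, $\{X_2=\tfrac12\}$, $\{2Y_2-Y_3=0\}$, and $\{\phi=0\}$ with $\phi:=\tfrac{\sqrt2}{2}(2Y_2-Y_3)+X_3-X_2$.

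The first three cases are easy estimates using the bounds already available on $\mathcal{S}$. On $\{X_3=0\}$, the ODE gives $X_3'=R_3-\Lambda W^2\geq R_3$; using $Y_1^2\leq 2$ and $Y_3\leq 2Y_2$ one checks $R_3\geq (m+\tfrac12)Y_2Y_3\geq 0$. On $\{X_2=\tfrac12\}$, substituting $G=1-R_s+(4m+2)\Lambda W^2$ (from the conservation law) into $X_2'$ kills the constant term, after which $Y_1^2\leq 2$, $Y_3\leq 2Y_2$ and $\Lambda\leq 0$ combine to give $X_2'\leq 0$. On $\{2Y_2-Y_3=0\}$ one computes $2Y_2'-Y_3'=4Y_2(X_3-X_2)$, and the remaining $\mathcal{A}_2$-constraint $\phi\geq 0$ forces $X_3-X_2\geq 0$ there, so $(2Y_2-Y_3)'\geq 0$.

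The main obstacle, and the reason Proposition~\ref{technical comparison} was established, is the boundary $\{\phi=0\}$. The approach is to compute $\phi'$ and factor. Using $X_3-X_2=-\tfrac{\sqrt2}{2}(2Y_2-Y_3)$ to eliminate the $(G+\Lambda W^2)$ contribution, one finds that
\begin{equation*}
\phi'=\tfrac{\sqrt2}{2}(2Y_2-Y_3)(1-X_2)-Y_3(2Y_2-Y_3)+R_3-R_2.
\end{equation*}
The key algebraic observation is the factorization
\begin{equation*}
R_3-R_2=(2Y_2-Y_3)\left[-\tfrac12\bigl(2Y_2-(m+1)Y_3\bigr)+\tfrac{1}{8}Y_1^2(2Y_2+Y_3)\right],
\end{equation*}
which arises because $-2Y_2^2+(m+2)Y_2Y_3-\tfrac{m+1}{2}Y_3^2=-\tfrac12(2Y_2-Y_3)(2Y_2-(m+1)Y_3)$ and the $Y_1^2$ terms pair up as $\tfrac18 Y_1^2(2Y_2-Y_3)(2Y_2+Y_3)$. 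After this factorization,
\begin{equation*}
\phi'=(2Y_2-Y_3)\left[\tfrac{\sqrt2}{2}(1-X_2)-Y_2+\tfrac{m-1}{2}Y_3+\tfrac{1}{8}Y_1^2(2Y_2+Y_3)\right].
\end{equation*}
Since $X_2\leq\tfrac12$, the bracket dominates $\tfrac{\sqrt2}{4}-Y_2+\tfrac{m-1}{2}Y_3+\tfrac{1}{8}Y_1^2(2Y_2+Y_3)$, which is precisely the quantity shown to be nonnegative in Proposition~\ref{technical comparison}. Together with $2Y_2-Y_3\geq 0$ this gives $\phi'\geq 0$, completing the invariance check and hence the lemma.
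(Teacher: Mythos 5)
Your proof is correct and follows essentially the same route as the paper: verify each boundary hypersurface of $\mathcal{A}_2$ separately and invoke Proposition~\ref{technical comparison} on the hard piece $\{\phi=0\}$. The only cosmetic difference is that the paper proves the $\{X_2=\tfrac12\}$ case via the equivalent constraint $X_1+4mX_3\geq 0$ (computing $(X_1+4mX_3)'\geq 0$ directly as $R_1+4mR_3-(1+4m)\Lambda W^2$), whereas you substitute the conservation law into $X_2'$; the two computations differ by a sign and a factor of $-\tfrac12$. Your explicit factorization of $R_3-R_2$ as $(2Y_2-Y_3)$ times a bracket is the same algebraic step the paper performs, just stated more cleanly.
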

\begin{proof}
We have two check three inequalities in $\mathcal{A}_2$. Firstly, we have
\begin{equation}
\begin{split}
\langle\nabla(X_3),V_{\leq 0}\rangle\mid_{X_3=0}&=(m+2)Y_2Y_3-\frac{1}{8}Y_1^2Y_3^2-\frac{1}{2}Y_3^2-\Lambda W^2\\
&\geq \frac{m+2}{2}Y_3^2-\frac{3}{4} Y_3^2\\
&\geq 0
\end{split}.
\end{equation}

Note that $X_2\leq \frac{1}{2}$ is equivalent to $X_1+4mX_3\geq 0$ in $\mathcal{C}\cap \mathcal{H}$. We have
\begin{equation}
\begin{split}
&\langle\nabla(X_1+4mX_3),V_{\leq 0}\rangle\mid_{X_1+4mX_3=0}\\
&=\frac{1}{2}Y_1^2Y_2^2+\frac{m}{4}Y_1^2Y_3^2+4m\left((m+2)Y_2Y_3-\frac{1}{8}Y_1^2Y_3^2-\frac{1}{2}Y_3^2\right)-(1+4m)\Lambda W^2\\
&\geq 0
\end{split}.
\end{equation}

As for inequalities concerning $Y_i$'s, we have
\begin{equation}
\begin{split}
\langle\nabla(2Y_2-Y_3),V_{\leq 0}\rangle\mid_{2Y_2-Y_3=0}&=2Y_3(X_3-X_2)\\
&\geq \sqrt{2}Y_3(Y_3-2Y_2)\\
&= 0
\end{split}.
\end{equation}

Finally, we have
\begin{equation}
\begin{split}
&\left.\left\langle\nabla\left(\frac{\sqrt{2}}{2}(2Y_2-Y_3)+X_3-X_2\right),V\right\rangle\right|_{\frac{\sqrt{2}}{2}(2Y_2-Y_3)+X_3-X_2=0}\\
&=\left(\frac{\sqrt{2}}{2}(2Y_2-Y_3)+X_3-X_2\right)(G+\Lambda W^2-1)+\sqrt{2}Y_2(1-X_2)-\frac{\sqrt{2}}{2}Y_3(1+X_2-2X_3)\\
&\quad +(2Y_2-Y_3)\left(\frac{1}{8}Y_1^2\left(2Y_2+Y_3\right)+\frac{m+1}{2}Y_3-Y_2\right)\\
&= (2Y_2-Y_3)\left(\frac{\sqrt{2}}{2}-\frac{\sqrt{2}}{2}X_2+\frac{m-1}{2}Y_3-Y_2+\frac{1}{8}Y_1^2(2Y_2+Y_3)\right)\\
&\quad \text{on replacing all $X_3$ with $X_2+\frac{\sqrt{2}}{2}(Y_3-2Y_2)$}\\
&\geq (2Y_2-Y_3)\left(\frac{\sqrt{2}}{4}+\frac{m-1}{2}Y_3-Y_2+\frac{1}{8}Y_1^2(2Y_2+Y_3)\right) \quad \text{since $X_2\leq \frac{1}{2}$ in $\mathcal{S}$}
\end{split}.
\end{equation}
By Proposition \ref{technical comparison}, the computation result above is non-negative. The proof is complete.
\end{proof}

By looking into the linearization of \eqref{Polynomial Einstein Equation} at $P_0$, $P_{AC-1}$ and $P_{AC-2}$ in Section \ref{Critical Points}. We learn that $\zeta_{(s_1,s_2,s_3)}$ is in $\mathcal{S}$ initially for $s_1,s_2,s_3\geq 0$; $\gamma_{(s_1,s_2,s_3)}$ is in $\mathcal{S}$ initially for $s_1,s_2,s_3\geq 0$; $\Gamma_s$ is in $\mathcal{S}$ initially for $s\in[0,\epsilon)$ for some $\epsilon>0$. Therefore, all these integral curves are defined on $\mathbb{R}$. It is clear that $R_1, R_2, R_3\geq 0$ in $\mathcal{S}$. Hence by Proposition \ref{big eta means big t}, we obtain the following lemma, using the same notation for the integral curve and the metric represented.
\begin{lemma}
\label{completeness}
The following metrics are complete.
\begin{enumerate}
\item
Smooth metrics $\zeta_{(s_1,s_2,s_3)}$, $s_1>0$, $s_2,s_3\geq 0$ defined on $M$;
\item
Smooth metrics $\gamma_{(s_1,s_2,s_3)}$, $s_1,s_2,s_3 \geq 0$ defined on $\mathbb{R}^{4m+4}$;
\item
Singular metrics $\Gamma_s$ with $s\in [0,\epsilon)$ defined on $\mathbb{R}^{4m+4}.$
\end{enumerate}
\end{lemma}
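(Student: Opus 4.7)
The plan is to combine the compact invariant set $\mathcal{S}$ constructed in Section \ref{Compact invariant set} with the linearization analysis of Section \ref{Critical Points}, and then invoke Proposition \ref{big eta means big t}. The strategy is identical for all three families: show that each integral curve enters $\mathcal{S}$ immediately after leaving the relevant critical point; use flow-invariance and compactness of $\mathcal{S}$ to extend the curve to all of $\mathbb{R}$; then check that the hypotheses of Proposition \ref{big eta means big t} are met so as to recover a metric defined on the full geodesic ray $t\in(0,\infty)$.

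First I would verify initial entry into $\mathcal{S}$ using the first-order expansions at $P_0$, $P_{AC-1}$ and $P_{AC-2}$. For $\zeta_{(s_1,s_2,s_3)}$, I would substitute \eqref{linearized zeta negative einstein} into each defining inequality of $\mathcal{S}$, namely $Y_i\geq 0$, $Y_1^2\leq 2$, $X_1-X_2\leq 0$, $2Y_2-Y_3\geq 0$, $\tfrac{\sqrt{2}}{2}(2Y_2-Y_3)+X_3-X_2\geq 0$, $X_2\leq \tfrac{1}{2}$ and $X_3\geq 0$. Since $P_0$ lies on the boundary of several of these (for instance $X_3=0=Y_3$ at $P_0$), the sign of each perturbation is controlled by the corresponding linear combination of $s_1 v_1+s_2 v_2+s_3 v_3$ at the eigenvector level, and the restrictions $s_1>0$, $s_2,s_3\geq 0$ are exactly what is needed to push the curve into the interior of every facet of $\mathcal{S}$ whose boundary meets $P_0$. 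An entirely analogous check at $P_{AC-1}$, using \eqref{linearized gamma negative einstein} and the explicit eigenvectors of Section \ref{AC1 and AC2}, handles $\gamma_{(s_1,s_2,s_3)}$. For $\Gamma_s$ with $s\in[0,\epsilon)$, the curve emanates along the fixed eigenvector $v_1$ at $P_{AC-2}$ (which is easily seen to enter $\mathcal{S}$), and a continuity argument gives a small $\epsilon>0$ for which the additional $s\,e^{\rho_1\eta}v_2$-perturbation does not push the curve out of $\mathcal{S}$ initially.

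Once each integral curve is inside $\mathcal{S}$ at some $\eta_0$, flow-invariance forces it to remain there for all $\eta\geq \eta_0$, and compactness rules out finite-time blow-up. Hence all three families are defined on $\mathbb{R}$. To apply Proposition \ref{big eta means big t} in the negative Einstein case I would then check $R_s\geq 0$ on $\mathcal{S}$. This is immediate from the expressions for $R_1,R_2,R_3$ in Section \ref{Einstein Equation}: on $\mathcal{S}$ we have $Y_1,Y_2,Y_3\geq 0$, while the negative terms $-\tfrac{1}{2}Y_1^2Y_2^2$ and $-\tfrac{1}{8}Y_1^2Y_3^2-\tfrac{1}{2}Y_3^2$ are dominated using $Y_1^2\leq 2$ and $2Y_2\geq Y_3$ (the same inequality that appeared in the proof of compactness \eqref{upper bound Y2}). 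For the Ricci-flat sub-curves $\zeta_{(s_1,s_2,0)}$, $\gamma_{(s_1,s_2,0)}$ and $\Gamma_0$, one observes that the relevant eigenvectors are tangent to $\partial \mathcal{E}=\mathcal{B}_{RF}$, so the curves remain on the Ricci-flat boundary stratum and Proposition \ref{big eta means big t} is used in its Ricci-flat form.

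The main obstacle will be the bookkeeping in the first step: one must simultaneously check every facet of $\mathcal{S}$ against the leading term of the linearized solution, and the most delicate facet is $\tfrac{\sqrt{2}}{2}(2Y_2-Y_3)+X_3-X_2\geq 0$, which mixes several coordinates and whose satisfaction at the linearized level relies on cancellations among the entries of $v_1$, $v_2$ and $v_3$. One also needs the observation that $v_1, v_2$ are tangent to $\partial \mathcal{E}$ at $P_0$ and $P_{AC-1}$ (established in Section \ref{Critical Points} via orthogonality with $N_{\partial\mathcal{E}}$) to ensure the Ricci-flat sub-families stay on $\mathcal{B}_{RF}$, and the asymptotic control of the error in \eqref{linearized zeta negative einstein} and \eqref{linearized gamma negative einstein} (guaranteed by Remark \ref{different eigenvalues}) to make sure the leading-order sign analysis is not spoiled by lower-order terms.
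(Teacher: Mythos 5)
Your proposal matches the paper's proof: verify initial entry into $\mathcal{S}$ by evaluating the linearized solutions against the facets of $\mathcal{S}$ that are tight at $P_0$, $P_{AC-1}$, $P_{AC-2}$, conclude from flow-invariance and compactness that the integral curves are defined on $\mathbb{R}$, note $R_1,R_2,R_3\geq 0$ in $\mathcal{S}$, and invoke Proposition \ref{big eta means big t}. The paper records this argument in a single short paragraph immediately after the flow-invariance lemma for $\mathcal{S}$, and your more detailed unpacking (including the observation that the Ricci-flat sub-families stay on $\partial\mathcal{E}=\mathcal{B}_{RF}$ because $v_1,v_2$ are orthogonal to $N_{\partial\mathcal{E}}$) is entirely consistent with it.
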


\section{Asymptotics}
\label{Asymptotic}
We divide this section into two parts. We first study the asymptotics for the Ricci-flat metrics obtained in Theorem \ref{zeta curve}-\ref{Gamma curve}. Then we study the asymptotics for the negative Einstein metrics. Without further specifying, we use $\Theta$ to denote any of the Einstein metrics in Lemma \ref{completeness}. A general property for a $\Theta$ is the following.
\begin{proposition}
\label{nonnegative X}
All $X_i$'s are positive along each $\Theta$.
\end{proposition}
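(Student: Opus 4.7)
The plan is to combine the linearizations from Section \ref{Critical Points} with the flow-invariant inequalities defining $\mathcal{S}$. First I would establish that the auxiliary coordinates $Y_1,Y_2,Y_3$ are strictly positive along $\Theta$: since each satisfies an ODE of the form $Y_j'=Y_j\cdot(\,\cdot\,)$ in \eqref{Polynomial Einstein Equation}, its sign cannot change along an orbit. The critical points $P_0$, $P_{AC-1}$, $P_{AC-2}$ all have $Y_1=\sqrt{2}>0$ and $Y_2>0$ by direct inspection, and the only $Y_j$ that vanishes at any of them is $Y_3$ at $P_0$; the expansion \eqref{linearized zeta negative einstein}, whose $v_1$-vector has $Y_3$-component $6\sqrt{2}$ weighted by the strictly positive coefficient $s_1$, shows that $Y_3>0$ on $\zeta_{(s_1,s_2,s_3)}$ just past $P_0$, and hence on the whole orbit. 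An analogous inspection of the linearized expansion at $P_{AC-1}$ for $\gamma_{(s_1,s_2,s_3)}$ and at $P_{AC-2}$ for $\Gamma_s$ yields $Y_1,Y_2,Y_3>0$ along all three families.

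Next I would show $X_i>0$ for $\eta$ sufficiently close to $-\infty$. The values of $X_1$ and $X_2$ at $P_0$, $P_{AC-1}$ and $P_{AC-2}$ are manifestly positive, so continuity suffices there. Only $X_3$ vanishes at a starting critical point, namely at $P_0$; but the $X_3$-components of $v_1,v_2,v_3$ are $3(m+2),\,0,\,3$, so \eqref{linearized zeta negative einstein} produces
\[
X_3=3\bigl((m+2)s_1+s_3\bigr)e^{2\eta/3}+O\!\left(e^{(2/3+\delta)\eta}\right),
\]
which is strictly positive near $-\infty$ under the standing assumption $s_1>0,\,s_3\geq 0$ for $\zeta_{(s_1,s_2,s_3)}$. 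At $P_{AC-1}$ and $P_{AC-2}$ one has $X_3=1/(4m+3)>0$ directly.

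Finally, I would argue by contradiction that no $X_i$ can reach zero at a finite $\eta_0$. If it did, then $X_i'(\eta_0)\leq 0$; but from \eqref{Polynomial Einstein Equation}, $X_i'|_{X_i=0}=R_i-\Lambda W^2\geq R_i$ since $\Lambda\leq 0$. A case-by-case inspection of the $R_i$ together with $Y_j>0$ and the defining inequalities $Y_1^2\leq 2$ and $2Y_2\geq Y_3$ of $\mathcal{S}$ yields $R_1=\tfrac{1}{2}Y_1^2Y_2^2+\tfrac{m}{4}Y_1^2Y_3^2>0$ and $R_2=(2-\tfrac{1}{2}Y_1^2)Y_2^2+\tfrac{m}{2}Y_3^2\geq Y_2^2+\tfrac{m}{2}Y_3^2>0$ immediately, while for $R_3=(m+2)Y_2Y_3-\tfrac{1}{8}Y_1^2Y_3^2-\tfrac{1}{2}Y_3^2$ using $Y_1^2\leq 2$ bounds it from below by $(m+2)Y_2Y_3-\tfrac{3}{4}Y_3^2$, and then $2Y_2\geq Y_3$ gives $R_3\geq \tfrac{2m+1}{4}Y_3^2>0$. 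This contradicts $X_i'(\eta_0)\leq 0$ and completes the argument. The most delicate point is the estimate for $R_3$, which is the only place where both flow-invariant inequalities of $\mathcal{S}$ must be combined simultaneously; the other $X_i$'s fall out essentially from the positivity of $Y_j$ alone.
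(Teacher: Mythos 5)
Your proposal is correct and rests on the same mechanism as the paper's: one shows that $X_i$ cannot reach zero in finite $\eta$ because at such a point $X_i'|_{X_i=0}=R_i-\Lambda W^2>0$ forces the curve back into $\{X_i>0\}$. The paper's version is terser and has a small informal step — it writes $X_2'|_{X_2=0}\geq R_2\geq 0$ and calls this a contradiction, which strictly requires the inequality to be strict, and it treats $X_3$ by simply appealing to the definition and flow-invariance of $\mathcal{S}$ — whereas you supply the missing strictness by first establishing $Y_j>0$ along the orbit (from $Y_j'=Y_j\cdot(\cdot)$ plus inspection of the starting critical points and their unstable eigenvectors) and then deriving $R_1,R_2,R_3>0$ from $Y_j>0$, $Y_1^2\leq 2$ and $2Y_2\geq Y_3$. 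This is a genuine tightening of the paper's argument rather than a different route; your pinpointing of the $R_3$ estimate as the only place where both flow-invariant inequalities of $\mathcal{S}$ are needed simultaneously is accurate and worth keeping.
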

\begin{proof}
By the definition of $\mathcal{S}$, we know that $X_3> 0$ along each of the integral curves. It is also clear that $R_i$'s are non-negative in $\mathcal{S}$. Suppose $X_2$ reaches zero for some $\eta_*\in\mathbb{R}$ along $\Theta$. Then at that point we have
$$\left.\frac{d}{d\eta}\right|_{\eta=\eta_*}X_2(\Theta(\eta))=(X_2(G+\Lambda W^2-1)+R_2-\Lambda W^2)(\Theta(\eta_*))\geq R_2(\Theta(\eta_*))\geq 0,$$
a contradiction. Similar argument can be used to prove that $X_1$ must be positive along $\Theta$.
\end{proof}

\subsection{Asymptotics for Ricci-flat Metrics}
All discussion in this section is restricted on $\mathcal{B}_{RF}$, where the function $W$ vanishes. In the case $m=1$, the asymptotic limit for $\gamma_{(s_1,s_2,0)}$ was rigorously proven to be ALC by \cite{bazaikin_new_2007}. In this section, we provide another proof and generalize the result for $m\geq 1$.

\begin{proposition}
\label{X1Y1 vanish}
Let $\Theta$ be any of $\zeta_{(s_1,s_2,0)}$ with $s_2>0$, $\gamma_{(s_1,s_2,0)}$ with $s_2>0$ or $\Gamma$ in Theorem \ref{zeta curve}-\ref{Gamma curve}, we have 
$\lim\limits_{\eta\to\infty}Y_1(\Theta(\eta))=0$ and $\lim\limits_{\eta\to\infty}X_1(\Theta(\eta))=0$.
\end{proposition}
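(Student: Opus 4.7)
The plan is to use monotonicity of $Y_1$ on the compact invariant set $\mathcal{S}$ together with a LaSalle-type analysis of the $\omega$-limit set. Since $Y_1' = Y_1(X_1 - X_2)$ and $\mathcal{S} \subset \mathcal{A}_1$ enforces $X_1 - X_2 \leq 0$, the function $Y_1$ is non-increasing along $\Theta$. The linearization analysis in Section \ref{Critical Points} shows that the $Y_1$-component of the relevant unstable eigenvector at each of $P_0$, $P_{AC-1}$, $P_{AC-2}$ is strictly negative when weighted by the prescribed parameters ($s_2>0$ for $\zeta,\gamma$; the $v_2$-direction for $\Gamma$), so $Y_1$ strictly decreases off its initial value $\sqrt{2}$ within finite $\eta$-time. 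Consequently $y_1^\infty := \lim_{\eta \to \infty} Y_1(\Theta(\eta))$ exists and satisfies $0 \leq y_1^\infty < \sqrt{2}$.

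The heart of the argument is ruling out $y_1^\infty > 0$. Under that assumption $\omega(\Theta) \subset \mathcal{S}$ is compact, connected, flow-invariant, and has $Y_1 \equiv y_1^\infty$ throughout, so $X_1 \equiv X_2$ on $\omega(\Theta)$ (from $Y_1' \equiv 0$). Specializing \eqref{X_1-X_2} to $\mathcal{B}_{RF}$ gives
\[
(X_1 - X_2)' = (X_1 - X_2)(G - 1) + (Y_1^2 - 2)\left(Y_2^2 + \tfrac{m}{4}Y_3^2\right),
\]
so invariance of $\omega(\Theta)$ together with $y_1^\infty < \sqrt{2}$ forces $Y_2 \equiv Y_3 \equiv 0$ there. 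Every point of $\omega(\Theta)$ is then a critical point of type (6) from Section \ref{Critical Points}: $(a,a,b,y_1^\infty,0,0)$ with $3a+4mb=3a^2+4mb^2=1$. Solving the resulting quadratic $(12m+9)a^2 - 6a + (1-4m) = 0$ gives $a = \frac{3 \pm 2\sqrt{6m(2m+1)}}{12m+9}$; direct check shows that for $m \geq 1$ one root yields $a<0$ while the other yields $b = (1-3a)/(4m) < 0$, contradicting $\omega(\Theta) \subset \mathcal{S} \cap \{X_i \geq 0\}$ (positivity at the limit following from Proposition \ref{nonnegative X} together with the constraint $X_3 \geq 0$ from $\mathcal{A}_2$). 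Hence $y_1^\infty = 0$.

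To deduce $X_1 \to 0$, we use $\omega(\Theta) \subset \{Y_1 = 0\}$. There $R_1 \equiv 0$, so $X_1' = X_1(G - 1) \leq 0$ (since $1 - G = R_s \geq 0$ on $\mathcal{B}_{RF}$). On any orbit contained in $\omega(\Theta)$, which is defined for all $\tau \in \mathbb{R}$, monotonicity and boundedness of $X_1$ force $X_1$ to be constant, whence $X_1(G - 1) \equiv 0$. If $X_1 > 0$ at some point of $\omega(\Theta)$, then $G \equiv 1$ on the orbit through that point, giving $R_s \equiv 0$; combined with $Y_1 = 0$ and the $\mathcal{S}$-constraint $Y_2 \geq Y_3/2$, a short analysis of $R_s = 4Y_2^2 + 4m(m+2)Y_2Y_3 - mY_3^2$ forces $Y_2 = Y_3 = 0$, reducing the orbit to a single type-(7) critical point. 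But for $(x_1,x_2,x_3,0,0,0)$ with $x_i \geq 0$ and $x_1+2x_2+4mx_3 = x_1^2+2x_2^2+4mx_3^2 = 1$, the elementary bound $x_i^2 \leq (\max_j x_j)\,x_i$ summed against the weights $(1,2,4m)$ yields $\max_i x_i \geq 1$; combined with the $\mathcal{S}$-ordering $x_1 \leq x_2 \leq x_3$ this forces $x_3 \geq 1$, whence $x_1 + 2x_2 = 1 - 4mx_3 \leq 1 - 4m < 0$ for $m \geq 1$, contradicting $x_1,x_2 \geq 0$. Therefore $X_1 \equiv 0$ on $\omega(\Theta)$, proving $X_1 \to 0$.

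The principal obstacle is the classification of $\omega$-limit points: although monotonicity of $Y_1$ is immediate, the system supports whole families of critical points (types (6) and (7) in Section \ref{Critical Points}) whose $Y_1$ or $X_1$ coordinates are bounded away from zero, and excluding them depends delicately on the ordering and positivity constraints built into $\mathcal{S}$ together with algebraic features of the Einstein equations specific to $m \geq 1$.
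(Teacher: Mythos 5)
Your argument is a genuinely different route from the paper's, built around a LaSalle-type analysis of the $\omega$-limit set rather than the pointwise comparison estimates the paper uses. For $Y_1\to 0$ your version is clean and arguably more conceptual: assuming $Y_1\to y_1^\infty\in(0,\sqrt2)$, you use the constancy of $Y_1$ on $\omega(\Theta)$ to force $X_1\equiv X_2$, then the invariance of $\omega(\Theta)$ together with $(X_1-X_2)'=(X_1-X_2)(G-1)+(Y_1^2-2)\bigl(Y_2^2+\tfrac m4Y_3^2\bigr)$ to force $Y_2\equiv Y_3\equiv 0$, landing squarely on type-(6) critical points, which your quadratic computation rules out. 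The paper instead shows $\int^\infty|X_2-X_1|\,d\eta<\infty$ and then derives a uniform lower bound $R_2-R_1\geq\delta$ (the type-(6) points enter only in establishing that bound by contradiction); this yields the same conclusion by a differential inequality on $X_2-X_1$. Both are correct; yours avoids the integral estimate at the cost of more structural dynamical-systems machinery. For $X_1\to 0$ the paper uses a comparison function $\sqrt{2m+1}\,Y_1-X_1$, showing it stays nonnegative by a boundary-derivative computation, and then $Y_1\to 0$ squeezes $X_1\to 0$. Your approach is again via $\omega(\Theta)$ and type-(7) critical points.

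There is, however, a concrete gap in your $X_1\to 0$ step. You write that on any orbit in $\omega(\Theta)$, ``monotonicity and boundedness of $X_1$ force $X_1$ to be constant.'' This is false as stated: a monotone bounded function on $\mathbb R$ need not be constant (think of $\arctan$, or of the nonconstant heteroclinic orbit of $\dot x=-x(1-x)$ on the compact invariant interval $[0,1]$). Note also that $X_1$ is \emph{not} a Lyapunov function along the original trajectory $\Theta$ (since $R_1>0$ off $\{Y_1=0\}$), only on the limit set $\omega(\Theta)\subset\{Y_1=0\}$, so the standard LaSalle principle cannot be invoked either. The argument is salvageable: let $M:=\max_{\omega(\Theta)}X_1$, achieved at some $p$. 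The backward orbit from $p$ stays in $\omega(\Theta)$, where $X_1'=X_1(G-1)\leq 0$, so $X_1$ is nondecreasing backward from $p$; being bounded above by $M$, it must be identically $M$ on the backward orbit. If $M>0$, this forces $G\equiv 1$ on the backward orbit, hence $R_s\equiv 0$, hence (by your $R_s$ analysis) $Y_2\equiv Y_3\equiv 0$, and then the backward orbit is a single type-(7) critical point with $x_1=M>0$, which your $\max_i x_i\geq 1$, $x_3\geq 1$ computation rules out. You should replace the constancy claim by this maximizer argument (or an equivalent chain-transitivity argument); as written, the step does not hold.

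Two minor checks you got right and that deserve credit: the reduction to $3a^2+4mb^2=3a+4mb=1$ and its root analysis are correct, and your Cauchy--Schwarz-style bound $1=\sum w_ix_i^2\leq(\max_j x_j)\sum w_ix_i=\max_j x_j$, combined with the ordering $x_1\leq x_2\leq x_3$ from $\mathcal A_1$ and $\mathcal A_2$ (the latter giving $X_3\geq X_2$ once $Y_2=Y_3=0$), correctly excludes type-(7) points with positive $x_1$ for $m\geq 1$.
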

\begin{proof}
Since $Y_1'=Y_1(X_1-X_2)< 0$
along each of the integral curves, we know that $Y_1$ decreases to some $l\in[0,\sqrt{2})$ along $\Theta$. Suppose $l\neq 0$, then there exists some sequence $\{\eta_k\}_{k=1}^\infty$ with $\lim\limits_{k\to \infty}\eta_k=\infty$ that $\lim\limits_{k\to \infty}(X_2-X_1)(\Theta(\eta_k))=0$. 

On the other hand, we claim that there exists some $\delta>0$ such that $R_2-R_1\geq \delta$ along $\Theta$. Suppose not, then there exists some sequence $\{\tilde{\eta}_k\}_{k=1}^\infty$ with $\lim\limits_{k\to \infty}\tilde{\eta}_k=\infty$ such that 
$$
\lim\limits_{k\to \infty}(R_2-R_1)(\Theta(\tilde{\eta}_k))=\lim\limits_{k\to \infty}\left[(2-Y_1^2)\left(Y_2^2+\frac{m}{4}Y_3^2\right)\right](\Theta(\tilde{\eta}_k))=0.
$$
Therefore, for the same sequence $\{\tilde{\eta}_k\}_{k=1}^\infty$, it is necessary that
$$
\lim\limits_{k\to \infty}Y_2(\Theta(\tilde{\eta}_k))=\lim\limits_{k\to \infty}Y_3(\Theta(\tilde{\eta}_k))=0.
$$
Since $1-G-R_s=0$ on $\mathcal{B}_{RF}$, we conclude that there exists a point in the $\omega$-limit set of $\Theta$ of the form $(a,a,b,y_1,0,0)$, with $y_1\neq 0$ and $3a^2+4mb^2=3a+4mb=1$. Such a point is a critical point of type 6 as in Section \ref{Critical Points}. It is clear that one of $a$ and $b$ must be negative, a contradiction to Proposition \ref{nonnegative X}. 

Observe \eqref{X_1-X_2}, we can find a small enough $\epsilon>0$ such that $X_2-X_1\leq \epsilon$ implies
\begin{equation}
\begin{split}
(X_2-X_1)'&=(X_2-X_1)(G-1)+R_2-R_1\\
&\geq (X_2-X_1)(G-1)+\delta\\
&\geq -\epsilon|G-1|+\delta\\
&>0
\end{split}
\end{equation}

Hence $X_2-X_1$ stays positive and does not tend to zero along $\Theta$. We reach a contradiction. The limit for $Y_1$ must be $0$. 

Note that $\sqrt{2m+1}Y_1-X_1$ is positive initially along each $\Theta$. Suppose $\sqrt{2m+1}Y_1-X_1=0$ for the first time at some $\eta_*$, then at $\Theta(\eta_*)$ we have
\begin{equation}
\label{derivative of X2-X1}
\begin{split}
(\sqrt{2m+1}Y_1-X_1)'(\Theta(\eta_*))&=\sqrt{2m+1}Y_1(X_1-X_2-G+1)-R_1
\end{split}
\end{equation}
By the identity $X_1+2X_2+4mX_3=1$, we have 
\begin{equation}
\label{upper bound for G}
\begin{split}
G&=X_1^2+2X_2^2+4mX_3^2\\
&=\frac{2m+1}{2m}X_2\left(2X_2-\frac{2}{2m+1}(1-X_1)\right)+\frac{4m+1}{4m}X_1^2-\frac{X_1}{2m}+\frac{1}{4m}\\
\end{split}.
\end{equation}
Since $1-X_1-2X_2=4mX_3\geq 0$ by Proposition \ref{nonnegative X}, the first term of the computation above is no larger than $\frac{2m+1}{2m}\left(
\frac{1-X_1}{2}\right)\left(1-X_1-\frac{2}{2m+1}(1-X_1)\right)$ for any fixed $X_1$. Hence we have 
\begin{equation}
\begin{split}
G&\leq \frac{3}{2}X_1^2-X_1+\frac{1}{2}
\end{split}
\end{equation}
by replacing $X_2$ with $\frac{1-X_1}{2}$ in \eqref{upper bound for G}. As $X_2\geq X_1$ in $\mathcal{S}$, it is clear that $X_1\in\left[0,\frac{1}{3}\right]$. Hence we know that $G\leq \frac{1}{2}$ at $\Theta(\eta_*)$. Then \eqref{derivative of X2-X1} continues as
\begin{equation}
\begin{split}
(\sqrt{2m+1}Y_1-X_1)'(\Theta(\eta_*))&\geq \sqrt{2m+1}Y_1\left(X_1-X_2+\frac{1}{2}\right)-R_1\\
&= \sqrt{2m+1}Y_1\left(\frac{3}{2}X_1+2mX_3\right)-R_1\\
&\geq \frac{3}{2}\sqrt{2m+1}Y_1X_1-R_1\quad \text{as $X_3\geq 0$ in $\mathcal{S}$}\\
&= Y_1^2\left(\frac{3(2m+1)}{2}-\frac{1}{2}Y_2^2-\frac{m}{4}Y_3^2\right)\quad \text{on replacing $X_1$ with $\sqrt{2m+1}Y_1$}\\
&\geq Y_1^2\left(\frac{3(2m+1)}{2}-\frac{2m+1}{2}Y_2^2\right)\quad \text{since $2Y_2-Y_3\geq 0$}\\
&\geq 0 \quad \text{by \eqref{upper bound Y2}}
\end{split}.
\end{equation}
Hence $\sqrt{2m+1}Y_1-X_1\geq 0$ along $\Theta$. As $\lim\limits_{\eta\to\infty}Y_1(\Theta(\eta))=0$, we must have 
$\lim\limits_{\eta\to\infty}X_1(\Theta(\eta))=0$.
\end{proof}

\begin{remark}
\label{bohm functional}
The B\"ohm functional introduced in \cite{bohm_non-compact_1999} becomes $\frac{Y_2^{2m+3}Y_3^{2m}}{Y_1}$ and it is clear that
$$
\left(\frac{Y_2^{2m+3}Y_3^{2m}}{Y_1}\right)'=\frac{Y_2^{2m+3}Y_3^{2m}}{Y_1}\left((4m+3)G-1\right)\geq 0.
$$
Since $Y_1$ converges to $0$, the functional blow up at infinity instead of converging to a finite number. This brings up a difficulty in describing the $\omega$-limit set, which does not occur in two-summand case. One may consider the B\"ohm functional $Y_2^{2m+2}Y_3^{2m}$ for the two-summand type subsystem on $\mathcal{B}_{ALC}$. However, the functional only demonstrate monotonicity in the subsystem. 
\end{remark}

Asymptotic limit for integral curves of two-summand type are known \cite{wink_cohomogeneity_2017}\cite{chi_cohomogeneity_2019}. For  $\mathcal{B}_{Rd}$, we know that $\lim\limits_{\eta\to\infty}\gamma_{(1,0,0)}=\lim\limits_{\eta\to\infty}\zeta_{(1,0,0)}=P_{AC-2}$. As for $\mathcal{B}_{FS}$, we have the following.
\begin{lemma}
For all $m\geq 1$, we have
$\lim\limits_{\eta\to \infty}\gamma_{(0,1,0)}=P_{ALC-1}$.
\end{lemma}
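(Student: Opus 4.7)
The plan is to confine $\gamma_{(0,1,0)}$ to the flow-invariant two-summand stratum $\mathcal{B}_{FS} \cap \mathcal{B}_{RF}$ and then identify its unique limit using Proposition \ref{X1Y1 vanish} together with the list of critical points from Section \ref{Critical Points}.

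First I would verify that the eigenvector $v_2$ from Section \ref{AC1 and AC2} is tangent to $\mathcal{B}_{FS}$ at $P_{AC-1}$. This is immediate from its explicit entries: the $X_2$ and $X_3$ components of $v_2$ are both $\sqrt{2}$, and $2(-(4m+3)) - (-(8m+6)) = 0$ for the $2Y_2 - Y_3$ component. Since $P_{AC-1}$ itself lies in $\mathcal{B}_{FS}$ and $\gamma_{(0,1,0)}$ emanates from $P_{AC-1}$ purely along $v_2$ (because $s_1 = s_3 = 0$), the curve is trapped in $\mathcal{B}_{FS} \cap \mathcal{B}_{RF} \cap \mathcal{S}$ for all $\eta \in \mathbb{R}$ by flow-invariance together with Lemma \ref{completeness}.

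Next I would apply Proposition \ref{X1Y1 vanish} (whose hypothesis $s_2 > 0$ is satisfied here) to conclude $X_1, Y_1 \to 0$ along the curve. The $\omega$-limit set is then a nonempty compact connected flow-invariant subset of $\mathcal{S} \cap \mathcal{B}_{FS} \cap \mathcal{B}_{RF} \cap \{X_1 = Y_1 = 0\}$. On this stratum, $X_1 + 2X_2 + 4mX_3 = 1$ together with $X_2 = X_3$ forces $X_2 = X_3 = \frac{1}{4m+2}$, while $Y_3 = 2Y_2$ combined with the Ricci-flat conservation law $1 - G - R_s = 0$ reduces to a single quadratic in $Y_2$ whose positive root is exactly the $Y_2$-entry of $P_{ALC-1}$. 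Scanning the critical-point list in Section \ref{Critical Points} confirms that $P_{ALC-1}$ is the only candidate satisfying all of these constraints for $m \geq 1$.

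The main obstacle is guaranteeing that the algebraic identification above really collapses the $\omega$-limit set to a single point rather than leaving room for heteroclinic fragments. Since every coordinate is uniquely pinned by the constraints just described, the stratum is the singleton $\{P_{ALC-1}\}$, forcing the $\omega$-limit set to equal it. As a cross-check, the reduced dynamics on $\mathcal{B}_{FS} \cap \mathcal{B}_{RF}$ is precisely the two-summand subsystem analysed in \cite{berard-bergery_sur_1982} and \cite[Theorem 9.130]{besse_einstein_2008}, where $P_{ALC-1}$ is known to be the global ALC sink of the corresponding almost K\"ahler--Einstein flow; this independently confirms the convergence and rules out any cycling inside the stratum.
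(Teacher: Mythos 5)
Your argument is correct and reaches the same conclusion, but the final step differs from the paper's. Both you and the paper use the fact that $\gamma_{(0,1,0)}$ lies in $\mathcal{B}_{FS}\cap\mathcal{B}_{RF}\cap\mathcal{S}$ and both invoke Proposition \ref{X1Y1 vanish} to get $X_1, Y_1 \to 0$. Where the paper then computes
$Y_2' = Y_2\bigl((4m+2)X_2-1\bigr)\bigl((4m+3)X_2-1\bigr) \le 0$
on $\mathcal{B}_{FS}$ (using $X_2 \in [\tfrac{1}{4m+3},\tfrac{1}{4m+2}]$) to deduce that $Y_2$ converges monotonically, you instead restrict the conservation law $1-G-R_s=0$ to the slice $\mathcal{B}_{FS}\cap\mathcal{B}_{RF}\cap\{X_1=Y_1=0\}$ and observe that with $X_2=X_3=\tfrac{1}{4m+2}$ and $Y_3=2Y_2$ fixed, the equation collapses to $Y_2^2 = \tfrac{4m+1}{(4m+2)^2(2m+2)}$, which is exactly the $P_{ALC-1}$ value. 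Since the $\omega$-limit set of a bounded orbit is nonempty and must lie in this slice, and the slice is a singleton, convergence follows without needing the monotonicity computation. Your route is arguably cleaner: it replaces the sign analysis of $Y_2'$ with a purely algebraic identification, at the cost of a slightly more abstract appeal to the structure of $\omega$-limit sets. The explicit tangency check that $v_2$ satisfies the defining equations of $\mathcal{B}_{FS}$ ($X_2$-component equals $X_3$-component, $2Y_2$-component equals $Y_3$-component) is a useful detail the paper only asserts; the closing citation to \cite{berard-bergery_sur_1982} and \cite[Theorem 9.130]{besse_einstein_2008} is a sanity check rather than part of the logic.
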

\begin{proof}
The integral curve $\gamma_{(0,1,0)}$ lies in $\mathcal{B}_{FS}$, where $X_2\equiv X_3$ and $2Y_2\equiv Y_3$. By the definition of $\mathcal{S}$, we know that 
$$
(4m+3)X_2\geq X_1+(4m+2)X_2=1.
$$
Combining Proposition \ref{nonnegative X}, we know that $X_2\in \left[\frac{1}{4m+3},\frac{1}{4m+2}\right]$ along $\gamma_{(0,1,0)}$. Along the integral curve we have
\begin{equation}
\begin{split}
Y_2'&=Y_2(G-X_2)\\
&=Y_2((4m+2)X_2-1)((4m+3)X_2-1)\\
&\quad\text{on replacing $X_1$ with $1-(4m+2)X_2$ and $X_3$ with $X_2$}\\
&\leq 0
\end{split}.
\end{equation}
Hence $Y_2$ converges along $\gamma_{(0,1,0)}$.
Since we know that $X_1$ and $Y_1$ converge to $0$ along $\gamma_{(0,1,0)}$ by Proposition \ref{X1Y1 vanish}, we learn that $\lim\limits_{\eta\to \infty} X_2\left(\gamma_{{(0,1,0)}}(\eta)\right)=\frac{1}{4m+2}$. Hence the limit must be $P_{ALC-1}$.
\end{proof}

In order to study the asymptotics of the other integral curves of Ricci-flat metrics, we need the following propositions.
\begin{proposition}
\label{very technical}
Let $\Theta$ be any of $\gamma_{(s_1,s_2,0)}$ with $s_2> 0$, $\zeta_{(s_1,s_2,0)}$ with $s_2>0$ or $\Gamma_0$ in Theorem \ref{zeta curve}-\ref{Gamma curve}. There exists a neighborhood $U$ around $P_{ALC-1}$ such that $\big(\frac{\sqrt{2}}{2}(2Y_2-Y_3)+X_3-X_2\big)'(\Theta(\eta))>0$ as long as $\Theta(\eta)\in U\cap \{X_2-X_3>0\}$.
\end{proposition}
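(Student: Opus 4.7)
The plan is to compute $f' := \bigl(\frac{\sqrt{2}}{2}(2Y_2 - Y_3) + X_3 - X_2\bigr)'$ along the Ricci-flat subsystem on $\mathcal{B}_{RF}$ and to reorganize it into the form $f' = f\cdot B + v\cdot C$, where $v := X_2 - X_3$ and $B, C$ are explicit functions of $(X_i, Y_i)$. This decomposition is natural for the problem: membership of $\Theta(\eta)$ in $\mathcal{S}$ forces $f \geq 0$ by the definition of $\mathcal{A}_2$, the hypothesis gives $v > 0$, and $P_{ALC-1}$ lies on $\{f = 0\}\cap\{v = 0\}$, so it will suffice to show $B(P_{ALC-1}) > 0$ and $C(P_{ALC-1}) > 0$ and then invoke continuity.

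For the derivation, I would set $u := 2Y_2 - Y_3$ and use $V_{\Lambda \leq 0}$ with $W \equiv 0$ to obtain
\[
u' = u(G - X_2) - 2 Y_3 v, \qquad v' = v(G - 1) + R_2 - R_3.
\]
Combining these with $\frac{\sqrt{2}}{2}u = f + v$ gives
\[
f' = (G - X_2) f + (1 - X_2 - \sqrt{2}Y_3) v + (R_3 - R_2).
\]
The key algebraic step is to factor $R_3 - R_2 = u\cdot A$, with
\[
A := -Y_2 + \tfrac{m+1}{2}Y_3 + \tfrac{Y_1^2}{8}(2Y_2 + Y_3),
\]
which I would verify by direct expansion (both the $Y_1$-free part $-2Y_2^2 + (m+2)Y_2 Y_3 - \tfrac{m+1}{2}Y_3^2$ and the $Y_1^2$-piece $\tfrac{Y_1^2}{8}(4Y_2^2 - Y_3^2)$ have $2Y_2 - Y_3$ as a factor). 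Substituting $u = \sqrt{2}(f+v)$ one more time yields $f' = fB + vC$ with
\[
B = (G - X_2) + \sqrt{2}A, \qquad C = (1 - X_2 - \sqrt{2}Y_3) + \sqrt{2}A.
\]

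Next I would evaluate at $P_{ALC-1}$, where $Y_1 = 0$, $Y_3 = 2Y_2$, and $G = X_2 = \frac{1}{4m+2}$. These immediately give $A^* = -Y_2^* + (m+1)Y_2^* = m Y_2^* > 0$ and $B^* = \sqrt{2} m Y_2^* > 0$. For $C$, a short computation yields
\[
C^* = \frac{4m+1}{4m+2} + \sqrt{2}(m - 2) Y_2^*,
\]
which is manifestly positive for $m \geq 2$. For $m = 1$ one checks directly that $Y_2^* = \frac{\sqrt{5}}{12}$, so $C^* = \frac{5}{6} - \frac{\sqrt{10}}{12} > 0$. Hence $B^*, C^* > 0$ for every $m \geq 1$.

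Finally, continuity furnishes an open neighborhood $U$ of $P_{ALC-1}$ on which $B, C > 0$. For any $\Theta(\eta) \in U \cap \{X_2 - X_3 > 0\}$, invariance of $\mathcal{S}$ gives $f(\Theta(\eta)) \geq 0$ and the hypothesis gives $v(\Theta(\eta)) > 0$, so $f'(\Theta(\eta)) = fB + vC \geq vC > 0$, as desired. The main (and essentially only) obstacle will be spotting the factorization $R_3 - R_2 = uA$; without this identity one cannot cleanly decouple the $f$- and $v$-contributions, and the two positivity hypotheses in the statement would not combine to give sign control on $f'$. Once this factorization is in hand, the proof is a routine critical-point evaluation plus a continuity argument.
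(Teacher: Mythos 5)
Your proof is correct and follows essentially the same strategy as the paper: both hinge on the factorization $R_3 - R_2 = (2Y_2 - Y_3)\bigl(-Y_2 + \tfrac{m+1}{2}Y_3 + \tfrac{1}{8}Y_1^2(2Y_2+Y_3)\bigr)$, both exploit $f \geq 0$ (from invariance of $\mathcal{S}$) together with the hypothesis $v = X_2 - X_3 > 0$, and both finish by checking positivity of the relevant coefficients at $P_{ALC-1}$ and invoking continuity. Your bookkeeping is slightly cleaner: you rewrite $f'$ \emph{exactly} as $fB + vC$ with $B = (G - X_2) + \sqrt{2}A$ and $C = (1 - X_2 - \sqrt{2}Y_3) + \sqrt{2}A$, and evaluate these at $P_{ALC-1}$, whereas the paper first throws away the nonnegative term $fG$ and then coarsens further via $X_3 \leq \tfrac{1}{4m}$ to obtain a lower bound of the form $\epsilon_1 c_1 + \epsilon_2 c_2$ in $\epsilon_1 = v$, $\epsilon_2 = u$; the verification at $P_{ALC-1}$ (e.g.\ for $m = 1$, that $m Y_2^* > \tfrac{\sqrt{2}}{8m}$) is then marginally less transparent than your direct check that $B^*, C^* > 0$. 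The substance and the ``only obstacle'' you identify are exactly the same as in the paper's argument.
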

\begin{proof}
Fix any $\eta\in\mathbb{R}$. Let $\epsilon_1=(X_2-X_3)(\Theta(\eta))$ and $\epsilon_2=(2Y_2-Y_3)(\Theta(\eta))$. We know that $\epsilon_2$ and $\frac{\sqrt{2}}{2}\epsilon_2-\epsilon_1$ are positive since $\Theta$ is in $\mathcal{S}$. Note that
\begin{equation}
\begin{split}
&\left(\frac{\sqrt{2}}{2}(2Y_2-Y_3)+X_3-X_2\right)'\\
&=\left(\frac{\sqrt{2}}{2}(2Y_2-Y_3)+X_3-X_2\right)G-(X_3-X_2)-\sqrt{2}Y_2X_2-\frac{\sqrt{2}}{2}Y_3(X_2-2X_3)\\
&\quad +(2Y_2-Y_3)\left(\frac{1}{8}Y_1^2\left(2Y_2+Y_3\right)+\frac{m+1}{2}Y_3-Y_2\right)\\
&=\left(\frac{\sqrt{2}}{2}(2Y_2-Y_3)+X_3-X_2\right)G-(X_3-X_2)-\frac{\sqrt{2}}{2}(2Y_2-Y_3)X_3-\frac{\sqrt{2}}{2}(2Y_2+Y_3)(X_2-X_3)\\
&\quad +(2Y_2-Y_3)\left(\frac{1}{8}Y_1^2\left(2Y_2+Y_3\right)+\frac{m+1}{2}Y_3-Y_2\right)\\
&\geq  \left(\frac{\sqrt{2}}{2}\epsilon_2-\epsilon_1\right) G+\epsilon_1-\frac{\sqrt{2}}{2}\epsilon_2 X_3-\frac{\sqrt{2}}{2}(2Y_2+Y_3)\epsilon_1+\epsilon_2\left(\frac{m+1}{2}Y_3-Y_2\right)\\
&\geq \epsilon_1\left(1-\frac{\sqrt{2}}{2}(2Y_2+Y_3)\right)+\epsilon_2\left(\frac{m+1}{2}Y_3-Y_2-\frac{\sqrt{2}}{8m}\right)
\end{split}.
\end{equation}
It is straightforward to check that coefficients of $\epsilon_1$ and $\epsilon_2$ above are positive at $P_{ALC-1}$. Hence we can find a neighborhood $U$ around $P_{ALC-1}$ in which coefficients of $\epsilon_1$ and $\epsilon_2$ above are positive. If $\Theta(\eta_*)\in U\cap \{X_2-X_3>0\}$, then we see that $\left(\frac{\sqrt{2}}{2}(2Y_2-Y_3)+X_3-X_2\right)'(\Theta(\eta_*))$ must be positive.
\end{proof}

\begin{lemma}
Let $\Theta$ be any of $\gamma_{(s_1,s_2,0)}$ with $s_2>0$, $\zeta_{(s_1,s_2,0)}$ with $s_2>0$ and $\Gamma_0$ in Theorem \ref{zeta curve}-\ref{Gamma curve}, we have $\lim\limits_{\eta\to\infty}\Theta(\eta)=P_{ALC-2}$ 
\end{lemma}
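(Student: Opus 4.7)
By Proposition \ref{X1Y1 vanish}, $X_1$ and $Y_1$ tend to $0$ along $\Theta$, so $\omega(\Theta)\subseteq \mathcal{B}_{ALC}\cap\mathcal{B}_{RF}\cap\mathcal{S}$, a flow-invariant set on which the dynamics reduces to the two-summand subsystem for the triple $(Sp(m)U(1),Sp(m)Sp(1),Sp(m+1))$. The plan is to pin down $\omega(\Theta)$ via a B\"ohm-type monotone functional, and then exclude $P_{ALC-1}$ using Proposition \ref{very technical}.

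Introduce $\mathcal{F}=Y_2^{2m+2}Y_3^{2m}$. Using the evolution equations for $Y_2$ and $Y_3$, a short computation gives $\mathcal{F}'/\mathcal{F}=(4m+2)G-1+X_1$ on $\mathcal{B}_{RF}$, which on $\mathcal{B}_{ALC}\cap\mathcal{B}_{RF}$ reduces to $(4m+2)G-1\geq 0$: indeed, Cauchy--Schwarz applied to the constraint $2X_2+4mX_3=1$ gives $G=2X_2^2+4mX_3^2\geq \tfrac{1}{4m+2}$, with equality iff $X_2=X_3=\tfrac{1}{4m+2}$. Hence $\mathcal{F}$ is non-decreasing on the reduced subsystem; being bounded on the compact flow-invariant set $\omega(\Theta)$, it must be constant there, forcing $X_2=X_3=\tfrac{1}{4m+2}$ throughout $\omega(\Theta)$. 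Preservation of $X_2=X_3$ under the flow further forces $R_2=R_3$, which with $Y_1=0$ factors as $\tfrac{1}{2}(2Y_2-Y_3)(2Y_2-(m+1)Y_3)=0$; combined with $G+R_s=1$, the pair $(Y_2,Y_3)$ is pinned to one of the two ALC critical values. By connectedness of $\omega(\Theta)$, the $\omega$-limit set equals either $\{P_{ALC-1}\}$ or $\{P_{ALC-2}\}$.

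It remains to exclude $\omega(\Theta)=\{P_{ALC-1}\}$. Let $f=\tfrac{\sqrt{2}}{2}(2Y_2-Y_3)+X_3-X_2$, which is nonnegative on $\mathcal{S}$ (by the construction of $\mathcal{A}_2$) and satisfies $f(P_{ALC-1})=0$. If $\Theta\to P_{ALC-1}$, then eventually $\Theta(\eta)$ lies in the neighborhood $U$ from Proposition \ref{very technical} and $f(\Theta(\eta))\to 0^+$. Linearizing the reduced flow on $\mathcal{B}_{ALC}\cap\mathcal{B}_{RF}$ at $P_{ALC-1}$ in coordinates $(X_2-X_3,2Y_2-Y_3)$ yields a matrix of negative determinant, so $P_{ALC-1}$ is a saddle whose stable manifold in $\mathcal{B}_{RF}$ has positive codimension. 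The idea is then to combine Proposition \ref{very technical}, which gives $f'>0$ on $U\cap\{X_2-X_3>0\}$, with this saddle structure: if $X_2-X_3>0$ persists on $\Theta\cap U$, then $f$ is strictly increasing there, contradicting $f\to 0^+$ while $f\geq 0$; if instead $X_2-X_3\leq 0$ holds infinitely often, the eigendirection analysis places $\Theta$ off the stable manifold of $P_{ALC-1}$. Either alternative yields a contradiction, so $\omega(\Theta)=\{P_{ALC-2}\}$.

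The main obstacle is making the exclusion step rigorous. The invariant set $\mathcal{B}_{FS}$, where $X_2=X_3$ and $2Y_2=Y_3$, contains the exceptional orbit $\gamma_{(0,1,0)}$ that genuinely does flow to $P_{ALC-1}$; so the argument must use the parameter ranges in the statement to separate $\Theta$ uniformly from the stable manifold of $P_{ALC-1}$ in $\mathcal{B}_{RF}$ for all large $\eta$, not only near the emanation critical point. Control of the sign of $X_2-X_3$ along the whole tail of $\Theta$, via a barrier inequality in the spirit of the proof of Proposition \ref{very technical}, is the technical heart of the proof.
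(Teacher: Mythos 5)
Your overall strategy — reduce to $\mathcal{B}_{ALC}\cap\mathcal{B}_{RF}$ via Proposition~\ref{X1Y1 vanish}, exploit the B\"ohm functional there, and invoke Proposition~\ref{very technical} to rule out $P_{ALC-1}$ — is in the same spirit as the paper's, but two of your steps do not go through as written, and you acknowledge one of them yourself.

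First, the claim that $\mathcal{F}=Y_2^{2m+2}Y_3^{2m}$ ``being bounded on the compact flow-invariant set $\omega(\Theta)$, must be constant there'' is not a valid inference. A non-decreasing function on a compact, connected, flow-invariant set need not be constant (consider $x'=x(1-x)$ on $[0,1]$ with $\mathcal{F}=x$). What does follow is that for each $p\in\omega(\Theta)$, $\mathcal{F}$ is constant on $\omega(p)\subseteq\omega(\Theta)$, hence $\{\mathcal{F}'=0\}$ --- and so $\{X_2=X_3=\tfrac{1}{4m+2}\}$ --- meets $\omega(\Theta)$. But this is much weaker than $\omega(\Theta)\subseteq\{P_{ALC-1},P_{ALC-2}\}$: it does not preclude $\omega(\Theta)$ containing a heteroclinic orbit in $\mathcal{B}_{ALC}\cap\mathcal{B}_{RF}$ joining two critical points. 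The subsequent ``by connectedness'' step therefore does not apply, and you cannot conclude the $\omega$-limit set is a singleton. The paper avoids this entirely by only establishing $P_{ALC-2}\in\omega(\Theta)$ (and then invoking that $P_{ALC-2}$ is a sink), rather than first trying to collapse $\omega(\Theta)$ to a point.

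Second, the exclusion step is incomplete, as you yourself say. In your alternative ``$X_2-X_3\leq 0$ holds infinitely often,'' the assertion that ``eigendirection analysis places $\Theta$ off the stable manifold of $P_{ALC-1}$'' is unsupported: a recurring sign alone does not locate $\Theta$ relative to a codimension-one stable manifold. The paper's route is different and decisive: it splits on whether $X_3-X_2$ vanishes finitely or infinitely many times. In the finite case it first upgrades ``$P_{ALC-1}\in\omega(\Theta)$'' to genuine convergence $\Theta\to P_{ALC-1}$ (using the monotonicity of $X_3-X_2$ deduced from $Y_2/Y_3\to 1/2$ and $R_s\geq 0$), and only then applies Proposition~\ref{very technical} on $U\cap\{X_2-X_3>0\}$ to contradict $f\to 0$. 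In the infinite case, it extracts from the sign changes of $R_3-R_2$ a sequence along which $2Y_2-(m+1)Y_3\to 0$ and $X_3-X_2\geq 0$, producing a non-critical point $P_*\in\omega(\Theta)\cap\mathcal{B}_{ALC}$, then flows it inside the two-dimensional invariant set $\mathcal{B}_{ALC}$ where the B\"ohm functional is legitimately monotone along a single orbit $\widetilde\Theta$, concluding $P_{ALC-2}\in\omega(\widetilde\Theta)\subseteq\omega(\Theta)$. Both cases terminate with the sink property of $P_{ALC-2}$. To repair your proof you would need to replace the ``constant on $\omega(\Theta)$'' step with an argument of this kind, and replace the eigendirection heuristic in the exclusion step with a concrete barrier or convergence argument along the actual orbit $\Theta$, not just in its $\omega$-limit set.
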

\begin{proof}
Suppose the function $X_3-X_2$ vanishes finitely many times along $\Theta$. Then it eventually has a sign. Since $\left(\frac{Y_2}{Y_3}\right)'=2\frac{Y_2}{Y_3}(X_3-X_2)$, the function $\frac{Y_2}{Y_3}$ eventually monotonic decreases or increases. Hence $\lim\limits_{\eta\to\infty}\frac{Y_2}{Y_3}(\Theta(\eta))=l$ for some $l$. If $l=0$, then we must have $\lim\limits_{\eta\to\infty}Y_2(\Theta(\eta))=\lim\limits_{\eta\to\infty}Y_3(\Theta(\eta))=0$. By Proposition \ref{X1Y1 vanish}, we conclude that $\lim\limits_{\eta\to\infty}(\Theta(\eta))=(0,a,b,0,0,0)$, where $2a+4mb=2a^2+4mb^2=1$. But then one of $a$ and $b$ must be negative, a contradiction to Proposition \ref{nonnegative X}. Hence we must have $l>0$. 
Then we learn that the $\omega$-limit set of $\Theta$ contains some element in $\left\{\left(0,\frac{1}{4m+2},\frac{1}{4m+2},0,y_2,y_3\right)\mid \frac{y_2}{y_3}=l\right\}\cap \partial \mathcal{E}=\{P_{ALC-1}, P_{ALC-2}\}$. Suppose $P_{ALC-1}$ were in the $\omega$-limit set. Then $\frac{Y_2}{Y_3}$ converges to $\frac{1}{2}$. Since $\frac{1}{2}$ is the minimum value for $\frac{Y_2}{Y_3}$ in $\mathcal{S}$ and $X_3-X_2$ is assumed to have a sign eventually, we know that $X_3-X_2$ must be negative eventually. Consider
\begin{equation}
\begin{split}
(X_3-X_2)'&=(X_3-X_2)(G-1)+R_3-R_2\\
&=-(X_3-X_2)(R_1+2R_2+4mR_3)\\
&\quad +(2Y_2-Y_3)\left(\frac{1}{8}Y_1^2\left(2Y_2+Y_3\right)+\frac{m+1}{2}Y_3-Y_2\right)
\end{split}.
\end{equation}
Since $\frac{Y_2}{Y_3}$ tends to $\frac{1}{2}$ and it is clear that $R_s=R_1+2R_2+4mR_3\geq 0$ in $\mathcal{S}$, $(X_3-X_2)'$ is eventually positive along $\Theta$. Hence $X_3-X_2$ eventually monotonic increases. Then we conclude that $\Theta$ has to converge to $P_{ALC-1}$. But that implies $\Theta$ eventually enters the set $U\cap\{X_2-X_3>0\}$ constructed in Proposition \ref{very technical} and does not come out, which means that the function $\frac{\sqrt{2}}{2}(2Y_2-Y_3)+X_3-X_2$ cannot converges to zero along $\Theta$. Hence we reach a contradiction. Therefore, $P_{ALC-2}$ is in the $\omega$-limit set of $\Theta$. Since the point is a sink in $\mathcal{B}_{RF}$,  we have $\lim\limits_{\eta\to\infty}\Theta(\eta)=P_{ALC-2}$.

Suppose the function $X_3-X_2$ vanishes infinitely many times along $\Theta$. Then it is necessary that the function $R_3-R_2$ changes sign infinitely many times along $\Theta$. But
$$
R_3-R_2=(2Y_2-Y_3)\left(\frac{1}{8}Y_1^2\left(2Y_2+Y_3\right)+\frac{m+1}{2}Y_3-Y_2\right)
$$
and $Y_1$ converges to $0$ by Proposition \ref{X1Y1 vanish}.
Hence there exists a sequence $\{\eta_k\}_{k=1}^\infty$ with $\lim\limits_{k\to \infty}\eta_k=\infty$ such that $\lim\limits_{k\to \infty}(2Y_2-(m+1)Y_3)(\eta_k)=0$ and $(X_3-X_2)(\eta_k)\geq 0$ for each $k$. Therefore, combining Proposition \ref{nonnegative X}, the $\omega$-limit set of $\Theta$ must contain some point $P_*$ in the set 
$$\{(0,x_2,x_3,0,y_2,y_3)\mid x_2,x_3\geq 0, 2x_2+4mx_3=1, 2y_2-(m+1)y_3=0\}\cap \partial \mathcal{E}.$$
If $P_*=P_{ALC-2}$, then $\Theta$ converges to $P_{ALC-2}$ since the point is a sink in the subsystem restricted on $\mathcal{B}_{RF}$. Suppose $P_*\neq P_{ALC-2}$, then it is not a critical point. Since $\mathcal{B}_{ALC}$ is a $2$-dimensional invariant set and the $\omega$-limit set is flow-invariant, the $\omega$-limit set of $\Theta$ must contain the integral curve $\widetilde{\Theta}$ that contains $P_*$ and lies on $\mathcal{B}_{ALC}$. Note that the reduced system on $\mathcal{B}_{ALC}$ is essentially the two-summand type cohomogeneity one system. 
Based on the study in \cite{wink_cohomogeneity_2017}\cite{chi_cohomogeneity_2019}, we know that $\widetilde{\Theta}$ must converges to $P_{ALC-2}$. Specifically, recall Remark \ref{bohm functional} and consider the B\"ohm functional $Y_2^{2m+2}Y_3^{2m}$. We have
$$
(Y_2^{2m+2}Y_3^{2m})'=Y_2^{2m+2}Y_3^{2m}((4m+2)G-1)\geq 0
$$
when restricted on $\mathcal{B}_{ALC}$. Hence $Y_2^{2m+2}Y_3^{2m}$ increases monotonically to some positive number along $\widetilde{\Theta}$, and the $\omega$-limit set of $\widetilde{\Theta}$ contains some element in $\{P_{ALC-1},P_{ALC-2}\}$. Since $P_{ALC-1}$ is in the boundary of the 2-dimensional invariant set $\mathcal{S}\cap \mathcal{B}_{ALC}$ while $P_{ALC-2}$ is in the interior, one can exclude $P_{ALC-1}$ by perturbing the boundary of $\mathcal{S}\cap \mathcal{B}_{ALC}$. Hence $\widetilde{\Theta}$ converges to $P_{ALC-2}$ and therefore $P_{ALC-2}$ is in the $\omega$-limit set of $\Theta$. The proof is complete.
\end{proof}

The asymptotic limits of all integral curves that represent Ricci-flat metrics are known, as summarized in the following lemma.
\begin{lemma}
\label{Asym for RF}
Asymptotic limits of integral curves in Lemma \ref{completeness} are the following.
\begin{equation}
\begin{split}
&\lim\limits_{\eta\to\infty} \zeta_{(s_1,s_2,0)}=\left\{\begin{array}{ll}
P_{AC-2}& s_2=0\\
P_{ALC-2}& s_1,s_2>0\\
\end{array}\right., \quad \lim\limits_{\eta\to\infty} \gamma_{(s_1,s_2,0)}=\left\{\begin{array}{ll}
P_{AC-2}& s_2=0\\
P_{ALC-2}& s_1,s_2>0\\
P_{ALC-1}& s_1=0\\
\end{array}\right., \\
&\lim\limits_{\eta\to\infty} \Gamma_0=P_{ALC-2}.
\end{split}
\end{equation}

\end{lemma}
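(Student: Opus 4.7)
The plan is to exhaust the finite list of parameter choices and invoke, in each case, either a two-summand result applicable on an invariant subset of $\partial\mathcal{E}\cap \mathcal{P}=\mathcal{B}_{RF}$, or the preceding lemma that already established $P_{ALC-2}$ as the limit for the generic ALC curves. The key observation is that each integral curve considered here is pinned to a specific invariant stratum by the eigenvector decomposition at its source critical point, so that the asymptotic behavior can be read off from the corresponding reduced subsystem.

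First I would handle the AC cases $s_2=0$. Since $(s_1,s_2,s_3)\in\mathbb{S}^2$ with $s_3=0$, the only relevant curves are $\zeta_{(1,0,0)}$ and $\gamma_{(1,0,0)}$. Using the linearizations computed in Section \ref{Critical Points}, the unstable eigenvectors $v_1, v_3$ at $P_0$ and $v_1$ at $P_{AC-1}$ are tangent to $\mathcal{B}_{Rd}\cap \mathcal{B}_{RF}$. Hence $\zeta_{(1,0,0)}$ and $\gamma_{(1,0,0)}$ remain in this two-summand invariant set, and the cited results in \cite{bohm_non-compact_1999}\cite{wink_cohomogeneity_2017}\cite{chi_cohomogeneity_2019} identify $P_{AC-2}$ as their limit.

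Next, for $\gamma_{(0,1,0)}$, the unstable eigenvector $v_2$ at $P_{AC-1}$ is tangent to $\mathcal{B}_{FS}$, so the curve lies in the Fubini--Study invariant set. The convergence $\lim_{\eta\to\infty}\gamma_{(0,1,0)}=P_{ALC-1}$ has already been proved in the earlier lemma via the monotonicity of $Y_2$ and the bounds $\tfrac{1}{4m+3}\leq X_2\leq \tfrac{1}{4m+2}$ forced by $\mathcal{S}$. Finally, the remaining cases---$\zeta_{(s_1,s_2,0)}$ and $\gamma_{(s_1,s_2,0)}$ with $s_1,s_2>0$, and $\Gamma_0$---are the precise content of the preceding lemma and yield $P_{ALC-2}$.

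Since every case reduces immediately to a result already established, there is no remaining analytic obstacle; the only work is to verify that the eigenvector data at the source critical points correctly sort the integral curves into the claimed invariant strata, which is a direct inspection of the explicit eigenvectors listed in Section \ref{Critical Points}.
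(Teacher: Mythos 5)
Your proposal is correct and matches the paper's treatment: Lemma \ref{Asym for RF} is a summary lemma, collecting the two-summand results on $\mathcal{B}_{Rd}$, the Fubini--Study case $\gamma_{(0,1,0)}$, and the preceding lemma identifying $P_{ALC-2}$ as the limit of all the remaining curves. One small inaccuracy to flag: you assert that $v_3$ at $P_0$ is tangent to $\mathcal{B}_{Rd}\cap\mathcal{B}_{RF}$, but $v_3$ is tangent only to $\mathcal{B}_{Rd}$, not to $\mathcal{B}_{RF}$---checking against $N_{\partial\mathcal{E}}(P_0)$ gives a nonzero pairing, which is consistent with the fact that $\zeta_{(s_1,0,s_3)}$ with $s_3>0$ is a negative Einstein (not Ricci-flat) curve. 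This does not affect your conclusion since $\zeta_{(1,0,0)}$ involves only $v_1$, but the statement as written conflates tangency to $\mathcal{B}_{Rd}$ with tangency to $\mathcal{B}_{Rd}\cap\mathcal{B}_{RF}$.
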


\subsection{Asymptotics for Negative Einstein metrics}
\begin{proposition}
\label{when in PAH}
Points in $\mathcal{S}$ with $G+\Lambda W^2=0$ must lie in the 1-dimensional stable manifold $P_{AH}$.
\end{proposition}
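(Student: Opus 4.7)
The plan is to argue by a sharp Cauchy--Schwarz estimate on $G$ combined with the non-negativity of $R_s$ on $\mathcal{S}$. Recall that on $\mathcal{E}$ we have $W^{2}=(1-G-R_s)/(-(4m+2)\Lambda)$, so since $\Lambda=-(4m+3)$ the hypothesis $G+\Lambda W^{2}=0$ is equivalent to the algebraic identity
\begin{equation*}
(4m+3)G+R_s=1.
\end{equation*}
Thus the proposition reduces to showing that in $\mathcal{S}$ this identity forces $X_1=X_2=X_3=\frac{1}{4m+3}$ and $Y_2=Y_3=0$, while $Y_1$ is free in $[0,\sqrt{2}]\subset[0,\infty)$; this is exactly the $P_{AH}$ locus.

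First I would establish the lower bound $G\geq \frac{1}{4m+3}$ on $\mathcal{H}$. This is the Cauchy--Schwarz inequality
\begin{equation*}
(X_1+2X_2+4mX_3)^{2}\le (1+2+4m)(X_1^{2}+2X_2^{2}+4mX_3^{2})
\end{equation*}
applied to the weights $(1,2,4m)$, using the constraint $X_1+2X_2+4mX_3=1$; equality holds iff $X_1=X_2=X_3$, and together with the same constraint this forces the common value $\frac{1}{4m+3}$.

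Next I would verify that $R_s=R_1+2R_2+4mR_3\ge 0$ on $\mathcal{S}$ and that each summand is non-negative. For $R_1$ this is immediate. For $R_2=Y_2^{2}(2-\tfrac{1}{2}Y_1^{2})+\tfrac{m}{2}Y_3^{2}$ this follows from $Y_1^{2}\le 2$, with the coefficient of $Y_2^{2}$ strictly positive. For $R_3=(m+2)Y_2Y_3-\tfrac{1}{8}Y_1^{2}Y_3^{2}-\tfrac{1}{2}Y_3^{2}$, I would use $Y_1^{2}\le 2$ and $2Y_2\ge Y_3$ from the definition of $\mathcal{S}$ to bound $R_3\ge\bigl(\tfrac{m+2}{2}-\tfrac{3}{4}\bigr)Y_3^{2}=\tfrac{2m+1}{4}Y_3^{2}\ge 0$.

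Combining these, $(4m+3)G+R_s\ge 1$ on $\mathcal{S}\cap\mathcal{H}$, with equality forcing simultaneously $X_1=X_2=X_3=\tfrac{1}{4m+3}$ and $R_1=R_2=R_3=0$. The vanishing of $R_2$, given the strictly positive coefficient $2-\tfrac{1}{2}Y_1^{2}\ge 1$, yields $Y_2=Y_3=0$; then $R_1=R_3=0$ follow automatically. Hence the point lies on the line $\{(\tfrac{1}{4m+3},\tfrac{1}{4m+3},\tfrac{1}{4m+3},Y_1,0,0)\}=P_{AH}$, as claimed. There is no serious obstacle here: the only thing to be careful about is that the non-negativity of $R_3$ genuinely uses both defining inequalities of $\mathcal{S}$, namely $Y_1^{2}\le 2$ and $2Y_2\ge Y_3$, so the conclusion is really a statement about $\mathcal{S}$ rather than $\mathcal{E}$ alone.
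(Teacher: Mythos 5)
Your proof is correct and takes essentially the same approach as the paper's: the Cauchy--Schwarz lower bound $G\ge\frac{1}{4m+3}$ together with $R_s\ge 0$ on $\mathcal{S}$, with the hypothesis forcing equality in both. The paper reaches the same conclusion through a short chain of inequalities on $-\Lambda W^2$, while you rephrase the hypothesis directly as $(4m+3)G+R_s=1$; this is just a notational difference. Your version is somewhat more complete in that you explicitly verify $R_s\ge 0$ summand-by-summand on $\mathcal{S}$ (the paper defers this to an earlier remark) and you spell out how $R_s=0$ forces $Y_2=Y_3=0$ via the strictly positive coefficient in $R_2$, a step the paper leaves implicit.
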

\begin{proof}
By the definition of the function $W$, we have $1-G-R_s=-(4m+2)\Lambda W^2$ in $\mathcal{E}$. Since $X_1+2X_2+4mX_3=1$ is held, we obtain the lower bound for $G\geq \frac{1}{4m+3}$ using Cauchy--Schwarz inequality. We have
\begin{equation}
\label{range of w}
\begin{split}
1-\frac{1}{4m+3}&\geq 1-G \\
&\geq -(4m+2)\Lambda W^2 \quad \text{since $R_s\geq 0$ in $\mathcal{S}$}
\end{split}.
\end{equation}
 Hence $-\Lambda W^2\leq \frac{1}{4m+3}$ in $\mathcal{S}.$
But by the assumption on the point, we have $0=G+\Lambda W^2 \geq \frac{1}{4m+3}+\Lambda W^2$. Hence we are forced to have $-\Lambda W^2=\frac{1}{4m+3}$ and $G=\frac{1}{4m+3}$. Then $R_s$ is forced to vanish at such a point. The point must lie in $P_{AH}$.
\end{proof}

\begin{lemma}
\label{Asym for negative Einstein}
Let $\Theta$ be any of integral curves $\zeta_{(s_1,s_2,s_3)}$ with $s_3>0$, $\gamma_{(s_1,s_2,s_3)}$ with $s_3>0$ or $\Gamma_s$ in Lemma \ref{completeness} with $s>0$. We have $\lim\limits_{\eta\to\infty} \Theta=P_{AH}(y_1)$ for some $y_1\in[0,\sqrt{2}]$.
\end{lemma}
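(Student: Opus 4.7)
The plan is to use $W$ as a Lyapunov function in a LaSalle-type argument, supplemented by monotonicity of $Y_1$ to pin down a unique limit point.

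First I would establish that $W$ is monotonically non-decreasing along $\Theta$. From the proof of Proposition \ref{when in PAH} (in particular the range inequality \eqref{range of w}), the bound $-\Lambda W^{2} \le \tfrac{1}{4m+3}$ holds throughout $\mathcal{S}$; combined with $G \ge \tfrac{1}{4m+3}$ from Cauchy--Schwarz applied to $X_{1}+2X_{2}+4mX_{3}=1$, this gives $G+\Lambda W^{2}\ge 0$ on all of $\mathcal{S}$, with equality forcing the point to lie in $P_{AH}$. Since $W' = W(G+\Lambda W^{2})$ and $W\ge 0$, we conclude $W'\ge 0$ along $\Theta$.

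Next, compactness of $\mathcal{S}$ gives $W\to W_{\infty}\ge 0$. I would then verify $W_{\infty}>0$ using the linearizations in Section \ref{Critical Points}. For $\zeta_{(s_{1},s_{2},s_{3})}$ with $s_{3}>0$, the eigenvector $v_{3}$ at $P_{0}$ is not in the tangent space of $\partial \mathcal{E}$ (only $v_{1},v_{2}$ are orthogonal to $N_{\partial\mathcal{E}}(P_{0})$); similarly the $v_{3}$ eigenvector at $P_{AC-1}$ is not tangent to $\partial\mathcal{E}$, so $\gamma_{(s_{1},s_{2},s_{3})}$ with $s_{3}>0$ leaves $\partial\mathcal{E}$ immediately; and for $\Gamma_{s}$ with $s>0$ the $v_{1}$ direction at $P_{AC-2}$ (the hyperbolic-cone direction) already pushes the curve into $\mathrm{int}(\mathcal{E})$. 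In each case $W$ becomes strictly positive for some finite $\eta$, whence $W_{\infty}>0$.

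Now I would extract the $\omega$-limit set conclusion. The set $\omega(\Theta)\subset \mathcal{S}$ is nonempty, compact and flow-invariant. Because $W$ is monotone and converges to $W_{\infty}$, it is identically $W_{\infty}$ on $\omega(\Theta)$, and flow-invariance forces $W'\equiv 0$ on every trajectory inside $\omega(\Theta)$. With $W=W_{\infty}>0$, this gives $G+\Lambda W^{2}\equiv 0$ on $\omega(\Theta)$, and Proposition \ref{when in PAH} then yields $\omega(\Theta)\subset P_{AH}$.

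Finally, to upgrade this to convergence to a single point, I would invoke the monotonicity of $Y_{1}$. In $\mathcal{S}\subset \mathcal{A}_{1}$ we have $X_{1}-X_{2}\le 0$, hence $Y_{1}' = Y_{1}(X_{1}-X_{2})\le 0$; since $Y_{1}$ is also bounded below by $0$, it converges to some $y_{1}^{*}\in[0,\sqrt{2}]$. Because $Y_{1}$ is precisely the coordinate parametrizing the critical manifold $P_{AH}=\{P_{AH}(y_{1})\mid y_{1}\ge 0\}$, the inclusion $\omega(\Theta)\subset P_{AH}$ combined with $Y_{1}\to y_{1}^{*}$ pins down $\omega(\Theta)=\{P_{AH}(y_{1}^{*})\}$, completing the proof. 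The main obstacle is step two, verifying $W_{\infty}>0$ across all three families, since the linearizations at $P_{0}$, $P_{AC-1}$ and $P_{AC-2}$ have rather different normal-to-$\partial\mathcal{E}$ structures and must each be handled by a separate eigenvector computation.
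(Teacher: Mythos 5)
Your proof is correct and follows the same Lyapunov route as the paper: use $W$ as a non-decreasing function (via $G\ge\frac{1}{4m+3}$ and $-\Lambda W^2\le\frac{1}{4m+3}$), conclude that along the $\omega$-limit set $G+\Lambda W^2=0$, and apply Proposition \ref{when in PAH} to land in $P_{AH}$. The paper's version is terser: it extracts a sequence $\eta_k$ with $(G+\Lambda W^2)(\Theta(\eta_k))\to 0$, notes some point of $P_{AH}$ lies in $\omega(\Theta)$, and then invokes Lemma \ref{Critical in W neq 0} (the normally hyperbolic, purely attracting structure of the equilibrium curve $P_{AH}$) to conclude convergence to a single point. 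You make explicit two steps the paper leaves implicit and which are genuinely worth spelling out: (a) that $W_\infty>0$, which you justify by checking that for each starting critical point ($P_0$, $P_{AC-1}$, $P_{AC-2}$) the eigenvector activated by $s_3>0$ (resp.\ $s>0$) is not tangent to $\partial\mathcal{E}$, forcing the trajectory immediately off the Ricci-flat locus, and (b) the uniqueness of the limit point, which you derive from the elementary monotone decrease of $Y_1$ rather than from the center-manifold/normal-hyperbolicity argument behind Lemma \ref{Critical in W neq 0}. Both clarifications are sound; the $Y_1$-monotonicity argument is a more self-contained way to pin down the limit. Overall the structure of the argument is the same as the paper's, just more fully justified.
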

\begin{proof}
Since these integral curves are trapped in $\mathcal{S}$, we have 
$
1-\frac{1}{4m+3}\geq -(4m+2)\Lambda W^2
$
as in \eqref{range of w}. Then $W'=W(G+\Lambda W^2)\geq 0$. Hence the function $W$ is increasing along $\Theta$ and converges to some positive number. Then there exists a sequence $\{\eta_k\}_{k=1}^\infty$ with $\lim\limits_{k\to\infty}\eta_k=\infty$ such that $\lim_{k\to \infty}(G+\Lambda W^2)(\Theta(\eta_k))=0$. Therefore, some subset of $P_{AH}$ is in the $\omega$-limit set of these integral curves by Proposition \ref{when in PAH}. The proof is complete by Lemma \ref{Critical in W neq 0}.
\end{proof}

For $\zeta_{(s_1,0,s_3)}$ and $\gamma_{(s_1,0,s_3)}$, we know that they converge to $P_{AH}(\sqrt{2})$. We are yet to determine what point in $P_{AH}$ that $\zeta_{(s_1,s_2,s_3)}$ and $\gamma_{(s_1,s_2,s_3)}$ converges to if $s_2>0$. Note that although $Y_1$ decreases in this case, it does not necessarily need to converge to zero.

\section{Relation to Special Holonomy}
In this section, we check the holonomy of Einstein metrics in Theorem \ref{zeta curve}-\ref{Gamma curve}. Some known results are recovered. 

\label{Holonomy}
\subsection{Negative K\"ahler--Einstein and Calabi--Yau}
\label{Calabi--Yau}
We recover K\"ahler--Einstein metrics with a complex structure $\mathcal{I}$ in \cite{dancer_kahler-einstein_1998} that is preserved by the action of $G$. Recall Remark \ref{group L} that $L=Sp(m)U(1)U(1)$.
If $dt^2+g_{G/K}(t)$ is K\"ahler--Einstein, then the coadjoint orbit $G/L=\mathbb{CP}^{2m+1}$ is K\"ahler for each $t$. Consequently, the cohomogeneity one K\"ahler--Einstein condition boils down to
\begin{equation}
\label{Kahler condition}
\begin{split}
&c\dot{c}=\frac{a}{4}\\
&2c^2=b^2
\end{split}.
\end{equation}
The second equation above is equivalent to the coadjoint orbit $G/L$ being K\"ahler. In the new coordinate with variables defined in \eqref{new variable}, integral curves that represent K\"ahler--Einstein metrics must lie in 
$$
\mathcal{B}_{KE}:=\mathcal{B}_{FS}\cap \left\{X_3\equiv \frac{1}{4}Y_1Y_3\right\}.
$$
We check the following.
\begin{proposition}
The set $\mathcal{B}_{KE}$ is invariant.
\end{proposition}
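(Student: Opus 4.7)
The plan is to exploit the fact, already established earlier in Section~\ref{Einstein Equation}, that $\mathcal{B}_{FS}$ is flow-invariant. Since $\mathcal{B}_{KE}=\mathcal{B}_{FS}\cap\{X_3-\tfrac{1}{4}Y_1Y_3\equiv 0\}$, it suffices to show that the single extra function $F:=X_3-\tfrac{1}{4}Y_1Y_3$ has vanishing $\eta$-derivative on $\mathcal{B}_{KE}$. First I would write, using \eqref{Polynomial Einstein Equation},
\begin{equation*}
F' = X_3(G+\Lambda W^2-1)+R_3-\Lambda W^2 - \tfrac{1}{4}Y_1Y_3\bigl[(X_1-X_2)+(G+\Lambda W^2 + X_2 - 2X_3)\bigr],
\end{equation*}
and then substitute the three defining relations of $\mathcal{B}_{KE}$: $X_2=X_3=\tfrac{1}{4}Y_1Y_3$, $Y_2=\tfrac{1}{2}Y_3$, together with $X_1=1-(4m+2)X_3=1-\tfrac{2m+1}{2}Y_1Y_3$ coming from the hyperplane $\mathcal{H}$.

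The second step is a cancellation: replacing $X_3$ by $\tfrac{1}{4}Y_1Y_3$ in the first bracket makes the $(G+\Lambda W^2)$ contributions in the two pieces cancel, and the surviving expression is
\begin{equation*}
F'\bigl|_{\mathcal{B}_{KE}} = -\tfrac{1}{4}Y_1Y_3(1+X_1) + \tfrac{1}{8}Y_1^2Y_3^2 + R_3 - \Lambda W^2,
\end{equation*}
after which $1+X_1$ is rewritten in terms of $Y_1Y_3$ and $R_3$ is evaluated with $Y_2=\tfrac{1}{2}Y_3$, giving a polynomial in $Y_1,Y_3,\Lambda W^2$ only. The remaining task is to eliminate $\Lambda W^2$, and for this I would invoke the conservation law \eqref{Polynomial Conservation Law} in the form $\Lambda W^2 = \frac{R_s+G-1}{4m+2}$, computing $G$, $R_1$, $R_2$, $R_3$, hence $R_s$, explicitly on $\mathcal{B}_{KE}$ as polynomials in $(Y_1,Y_3)$. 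A direct monomial match should then show that $F'|_{\mathcal{B}_{KE}}$ vanishes identically; in particular the $Y_3^2$, $Y_1Y_3$ and $Y_1^2Y_3^2$ coefficients each cancel.

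The main obstacle is merely bookkeeping: several factors of $(2m+1)$ appear simultaneously from the normalization $1/(4m+2)$ in the conservation law and from the expression for $X_1$, so one must be careful with coefficients. Conceptually there is no difficulty, because the relations \eqref{Kahler condition} translate into a K\"ahler condition that, being compatible with the Ricci tensor having type $(1,1)$, is automatically preserved by the cohomogeneity-one Einstein flow. If a cleaner argument is desired, one may alternatively verify invariance in two stages: show first that $X_3-\tfrac{1}{4}Y_1Y_3$ is preserved on the Ricci-flat stratum $\partial\mathcal{E}$ (where $W\equiv 0$) using only $G$ and $R_s$, and then note that the full calculation carries over verbatim once $\Lambda W^2$ is replaced by its conservation-law value. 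Either route yields the invariance of $\mathcal{B}_{KE}$.
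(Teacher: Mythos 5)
Your proposal is correct and follows essentially the same route as the paper's proof: both use the already-established invariance of $\mathcal{B}_{FS}$, compute $\bigl(X_3-\tfrac14 Y_1Y_3\bigr)'$ on $\mathcal{B}_{KE}$ after eliminating $X_1,X_2,X_3,Y_2$ via the constraints, and reduce the result to $\tfrac{m+1}{2}Y_3^2+\tfrac{2m+1}{8}Y_1^2Y_3^2-\tfrac12 Y_1Y_3-\Lambda W^2$, which vanishes by the conservation law restricted to $\mathcal{B}_{KE}$. The only cosmetic difference is that the paper first packages the conservation law into the relation \eqref{KE Polynomial Conservation Law} and then cites it, whereas you propose to substitute $\Lambda W^2=(R_s+G-1)/(4m+2)$ directly and match monomials; the underlying computation is identical, and your intermediate expression $-\tfrac14 Y_1Y_3(1+X_1)+\tfrac18 Y_1^2Y_3^2+R_3-\Lambda W^2$ agrees with the paper's after the same eliminations.
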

\begin{proof}
It is clear that $\mathcal{B}_{FS}$ is invariant. If $X_3=\frac{1}{4}Y_1Y_3$ in $\mathcal{B}_{FS}$, then $X_1=1-2X_2-4mX_3=1-(4m+2)X_3=1-\frac{2m+1}{2}Y_1Y_3$ in $\mathcal{B}_{FS}$. Hence on $\mathcal{B}_{KE}$, we can eliminate all $X_i$'s and $Y_2$ in \eqref{Polynomial Conservation Law} and obtain the following.
\begin{equation}
\label{KE Polynomial Conservation Law}
\begin{split}
\frac{m+1}{2}Y_3^2+\frac{2m+1}{8}Y_1^2Y_3^2-\frac{1}{2}Y_1Y_3-\Lambda W^2=0
\end{split}
\end{equation}
On the other hand, we have
\begin{equation}
\begin{split}
&\left.\left\langle\nabla\left(X_3-\frac{1}{4}Y_1Y_3\right),V_{\leq 0}\right\rangle\right|_{X_3-\frac{1}{4}Y_1Y_3=0}\\
&=\left(X_3-\frac{1}{4}Y_1Y_3\right)(G+\Lambda W^2-1)\\
&\quad + (m+2)Y_2Y_3-\frac{1}{8}Y_1^2Y_3^2-\frac{1}{2}Y_3^2-\Lambda W^2-\frac{1}{4}Y_1Y_3(1+X_1-2X_3)\\
&= \frac{m+2}{2}Y_3^2-\frac{1}{8}Y_1^2Y_3^2-\frac{1}{2}Y_3^2-\Lambda W^2-\frac{1}{4}Y_1Y_3(2-(m+1)Y_1Y_3)\\
&\quad \text{Use definition of $\mathcal{B}_{KE}$ to eliminate $Y_2$ and $X_i$'s}\\
&=\frac{m+1}{2}Y_3^2+\frac{2m+1}{8}Y_1^2Y_3^2-\frac{1}{2}Y_1Y_3-\Lambda W^2\\
&=0 \quad \text{by \eqref{KE Polynomial Conservation Law}}
\end{split}.
\end{equation}
Hence $\mathcal{B}_{KE}$ is invariant.
\end{proof}
Hence $\mathcal{B}_{KE}$ is an 2-dimensional invariant set. It straightforward to check that $\mathcal{B}_{KE}$ only contains critical points $P_{AC-1}$, $(1,0,0,0,0,0)$ and $\left(-\frac{4m+1}{4m+3},\frac{2}{4m+3},\frac{2}{4m+3},0,0,0\right)$ listed in Section \ref{Critical Points}. The last two critical points are of type 7 in Section \ref{Critical Points}. Since $\mathcal{B}_{KE}$  does not contain $P_0$, $P_{AC-2}$, $P_{ALC-1}$, $P_{ALC-2}$ or any point on $P_{AH}$, no integral curve of Theorem \ref{zeta curve}-\ref{Gamma curve} lies in $\mathcal{B}_{KE}$.

One can check that there are integral curves emanating from $(1,0,0,0,0,0)$. They represent K\"ahler--Einstein metrics constructed in \cite{berard-bergery_sur_1982}\cite[Theorem 9.129]{besse_einstein_2008}. In particular, $\mathcal{B}_{KE}\cap \mathcal{B}_{RF}$ is a 1-dimensional invariant set that contains $P_{AC-1}$ and $(1,0,0,0,0,0)$. The part that ``joins'' these two critical points is exactly the 
image of the integral curve that emanates from $(1,0,0,0,0,0)$ and tends to $P_{AC-1}$, representing a Calabi--Yau metric with a $\mathbb{CP}^{2m+1}$ bolt and an AE limit.

\subsection{Quaternionic K\"ahler and Hyper-K\"ahler}
\label{QK and HK}
By \cite{dancer_quaternionic_1999}, the existence of the triple of almost complex structures forces $a$ and $b$ to be linear function in $t$ and $\frac{a}{b}=\sqrt{2}$. Therefore, any integral curve that represents a hyperK\"ahler metric or a quaternionic K\"ahler metric must lie in the invariant set $\mathcal{B}_{Rd}$. For a quaternionic K\"ahler metric with normalized Einstein constant $\Lambda=-(4m+3)$,  the closedness of the fundamental 4-form implies
\begin{equation}
\label{Tricomplex condition}
\begin{split}
&c\dot{c}=\frac{a}{4}\\
&2c^2=b^2+\frac{2}{m+3}\Lambda W^2
\end{split}.
\end{equation}
Therefore, integral curves that represent quaternionic K\"ahler metrics must lie in the following set.
$$\mathcal{B}_{QK}:=\mathcal{B}_{Rd}\cap \left\{Y_3^2-2Y_2Y_3+\frac{2}{m+3}\Lambda W^2\equiv 0\right\} \cap \left\{X_3-\frac{1}{4}Y_1Y_3\equiv 0\right\}.$$
\begin{proposition}
The set $\mathcal{B}_{QK}$ is invariant.
\end{proposition}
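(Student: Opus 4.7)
The plan mirrors the argument just completed for $\mathcal{B}_{KE}$. Since $\mathcal{B}_{Rd}$ is already known to be flow-invariant, it suffices to verify that the two remaining defining functions
$$F_1:=X_3-\tfrac{1}{4}Y_1Y_3,\qquad F_2:=Y_3^2-2Y_2Y_3+\tfrac{2}{m+3}\Lambda W^2$$
have derivatives along $V_{\Lambda\leq 0}$ that vanish on $\mathcal{B}_{QK}$. On $\mathcal{B}_{Rd}$ we have $Y_1\equiv\sqrt{2}$ (so $Y_1'=0$), $X_1=X_2$, and $3X_2+4mX_3=1$; imposing $F_1=0$ forces $X_3=\tfrac{\sqrt{2}}{4}Y_3$ and $X_2=\tfrac{1}{3}-\tfrac{\sqrt{2}m}{3}Y_3$, while $F_2=0$ determines $Y_2=\tfrac{1}{2}Y_3+\tfrac{\Lambda W^2}{(m+3)Y_3}$. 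My strategy is to substitute these expressions into $F_1'$ and $F_2'$ and show that both reduce to a scalar multiple of the single polynomial
$$A(Y_3,\Lambda W^2):=2\sqrt{2}(m+3)Y_3-(4m+3)(m+3)Y_3^2+6\Lambda W^2,$$
and then to show that $A\equiv 0$ on $\mathcal{B}_{QK}$ by extracting it as a factor of the conservation law of $\mathcal{E}$.

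For $F_1'$ the flow equations give
$$F_1'=X_3(G+\Lambda W^2-1)+R_3-\Lambda W^2-\tfrac{1}{4}Y_1Y_3(G+\Lambda W^2+X_2-2X_3);$$
the substitution $X_3=\tfrac{1}{4}Y_1Y_3$ kills the $(G+\Lambda W^2)$ contributions, and after expanding $R_3=(m+2)Y_2Y_3-\tfrac{3}{4}Y_3^2$ (using $Y_1^2=2$), using $F_2=0$ to replace $Y_2Y_3$, and substituting for $X_2$, one obtains $F_1'=-\tfrac{1}{6(m+3)}A$. For $F_2'$, applying the flow equations for $Y_2,Y_3$ together with $(W^2)'=2W^2(G+\Lambda W^2)$, the $(G+\Lambda W^2)$ terms group with coefficient exactly $2F_2$, which vanishes on $\mathcal{B}_{QK}$; the remainder is $2Y_3^2(X_2-2X_3)+4Y_2Y_3X_3$, and the same substitutions turn it into $F_2'=\tfrac{\sqrt{2}Y_3}{6(m+3)}\,A$.

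It then remains to show $A\equiv 0$ on $\mathcal{B}_{QK}$. Plugging the parametrization into the defining equation $1-G-R_s+(4m+2)\Lambda W^2=0$ of $\mathcal{E}$ and clearing denominators yields a polynomial in $Y_3$ and $\Lambda W^2$, and the crucial identity to verify is
$$12(m+3)^2Y_3^2\bigl[1-G-R_s+(4m+2)\Lambda W^2\bigr]=A\cdot B,$$
where $B:=-2\sqrt{2}(m+3)Y_3-(8m+3)(m+3)Y_3^2+6\Lambda W^2$. Direct evaluation at $P_{AC-1}\in\mathcal{B}_{QK}$, where $Y_3=\tfrac{2\sqrt{2}}{4m+3}$ and $W=0$, shows $A(P_{AC-1})=0$ while $B(P_{AC-1})\neq 0$. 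Hence on the connected component of $\mathcal{B}_{QK}$ containing $P_{AC-1}$ the factor $A$ vanishes identically, and therefore $F_1'=F_2'=0$ there, yielding invariance.

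The main obstacle is the factorization identity above: it is not structurally transparent and has to be checked by matching the coefficients of $Y_3^4$, $Y_3^3$, $Y_3^2$, $Y_3^2\Lambda W^2$, $Y_3\Lambda W^2$, and $\Lambda^2W^4$ one by one. Once this identity is in hand, everything else is a routine substitution.
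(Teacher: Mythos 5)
Your computational skeleton matches the paper's: reduce the derivatives of the two constraint functions along the flow to multiples of a single factor of the conservation law, then show that factor vanishes on $\mathcal{B}_{QK}$. The difference is which variable you eliminate. You use the constraint $F_2=0$ to solve for $Y_2$ in terms of $(Y_3,\Lambda W^2)$, whereas the paper solves it for $\Lambda W^2$ and works with $(Y_2,Y_3)$. Indeed your $A$ is, up to the factor $2\sqrt{2}(m+3)Y_3$, exactly the paper's linear factor $1-\tfrac{2m+3}{2}\sqrt{2}Y_3+\tfrac{3}{2}\sqrt{2}Y_2$ after eliminating $Y_2$, and your two formulas $F_1'=-\tfrac{1}{6(m+3)}A$ and $F_2'=\tfrac{\sqrt{2}Y_3}{6(m+3)}A$ are consistent with what the paper computes. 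So the algebra is sound.

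The gap is in the step ``hence $A\equiv 0$ on $\mathcal{B}_{QK}$.'' Your argument — evaluate $A$ and $B$ at $P_{AC-1}$, find $A=0$, $B\neq 0$, then invoke connectedness — does not close as stated. First, you need to know that $\{A=0\}$ and $\{B=0\}$ are disjoint on the relevant region; otherwise a connected curve in $\{AB=0\}$ can cross from one branch to the other. This is actually true: $A-B=4(m+3)Y_3(\sqrt{2}+mY_3)>0$ for $Y_3>0$, so the branches never meet there, but you need to say this. Second, even granting disjointness, your argument establishes $A\equiv 0$ only on the connected component through $P_{AC-1}$, whereas the proposition asserts invariance of all of $\mathcal{B}_{QK}$. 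Third, your parametrization divides by $Y_3$, so points of $\mathcal{B}_{QK}$ with $Y_3=0$ (and $P_0$ is such a point) are not covered and need a separate check. The paper sidesteps all three issues with a pointwise sign argument: the defining relation $Y_3(Y_3-2Y_2)=-\tfrac{2}{m+3}\Lambda W^2\geq 0$ gives $Y_3\geq 2Y_2$, and then the other factor $1+\tfrac{4m+3}{2}\sqrt{2}Y_3-\tfrac{3}{2}\sqrt{2}Y_2\geq 1>0$ everywhere on $\mathcal{B}_{QK}$, forcing the first factor (your $A$, up to the harmless multiplier) to vanish at every point. That argument also spares you from having to verify a six-coefficient factorization identity; you only need the one factor's sign. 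I'd recommend replacing the connectedness step with this sign estimate.
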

\begin{proof}
It is clear that $\mathcal{B}_{Rd}$ is invariant. Moreover, $X_3=\frac{1}{4}Y_1Y_3$ becomes $X_3=\frac{\sqrt{2}}{4}Y_3$ in $\mathcal{B}_{Rd}$ and $X_1=X_2=\frac{1-4mX_3}{3}=\frac{1-m\sqrt{2}Y_3}{3}$ in $\mathcal{B}_{Rd}$. Hence on $\mathcal{B}_{QK}$, we can eliminate $Y_1$, $W$ and all $X_i$'s in \eqref{Polynomial Conservation Law} and obtain the following.
\begin{equation}
\label{QK Polynomial Conservation Law}
\begin{split}
0=\left(1-\frac{2m+3}{2}\sqrt{2}Y_3+\frac{3}{2}\sqrt{2}Y_2\right)\left(1+\frac{4m+3}{2 }\sqrt{2}Y_3-\frac{3}{2}\sqrt{2}Y_2\right)
\end{split}
\end{equation}
Note that by the definition of $\mathcal{B}_{QK}$, we must have $Y_3\geq 2Y_2$. Hence computation above implies
$$1-\frac{2m+3}{2}\sqrt{2}Y_3+\frac{3}{2}\sqrt{2}Y_2=0$$ on $\mathcal{B}_{QK}$.

On the other hand, we have
\begin{equation}
\begin{split}
&\left.\left\langle\nabla\left(X_3-\frac{1}{4}Y_1Y_3\right),V_{\leq 0}\right\rangle\right|_{X_3-\frac{1}{4}Y_1Y_3=0}\\
&=\left(X_3-\frac{1}{4}Y_1Y_3\right)(G+\Lambda W^2-1)\\
&\quad + (m+2)Y_2Y_3-\frac{1}{8}Y_1^2Y_3^2-\frac{1}{2}Y_3^2-\Lambda W^2-\frac{1}{4}Y_1Y_3(1+X_1-2X_3)\\
&=(m+2)Y_2Y_3-\frac{3}{4}Y_3^2+\frac{m+3}{2}(Y_3^2-2Y_2Y_3)\\
&\quad -\frac{\sqrt{2}}{4}Y_3\left(\frac{4}{3}-\frac{2m+3}{6}\sqrt{2}Y_3\right)\\
&\quad \text{Use definition of $\mathcal{B}_{Rd}$ to eliminate $Y_1$, $W$ and $X_i$'s}\\
&=\frac{\sqrt{2}}{3}Y_3\left(\frac{2m+3}{2}\sqrt{2}Y_3-\frac{3}{2}\sqrt{2}Y_2-1\right)\\
&=0 \quad \text{by \eqref{QK Polynomial Conservation Law}}
\end{split}
\end{equation}
and
\begin{equation}
\begin{split}
&\left.\left\langle\nabla\left(Y_3^2-2Y_2Y_3+\frac{2}{m+3}\Lambda W^2\right),V_{\leq 0}\right\rangle\right|_{Y_3^2-2Y_2Y_3+\frac{2}{m+3}\Lambda W^2=0}\\
&=2\left(Y_3^2-2Y_2Y_3+\frac{2}{m+3}\Lambda W^2\right)(G+\Lambda W^2) + Y_3^2(2X_2-4X_3)+4Y_2Y_3X_3\\
&=\frac{2}{3}Y_3^2\left(1-\frac{2m+3}{2}\sqrt{2}Y_3+\frac{3}{2}\sqrt{2}Y_2\right)\\
&\quad \text{Use definition of $\mathcal{B}_{Rd}$ to eliminate $X_i$'s}\\
&=0 \quad \text{by \eqref{QK Polynomial Conservation Law}}
\end{split}.
\end{equation}
Therefore the proof is complete.
\end{proof}

Critical points $P_{AC-1}$ and $P_{QK}$ are in the set $\mathcal{B}_{QK}$ and the set is 1-dimensional. The quaternionic K\"ahler metric in \cite{swann_hyper-kahler_1991} is realized as the integral curve $\gamma_{\left(-\frac{1}{\sqrt{(4m+12)^2+1}},0,\frac{4m+12}{\sqrt{(4m+12)^2+1}}\right)}$. At the infinity, the exponential index for $a$ and $b$ is twice the one of $c$. As $Y_3\geq 2Y_2$ in $\mathcal{B}_{QK}$, we know that such an integral curve is not contained in $\mathcal{S}$ hence it is not any one of the metrics in Theorem \ref{zeta curve}-\ref{Gamma curve}. Note that the hyper-K\"ahler metric is represented by the critical point $P_{AC-1}$, which is the flat metric $\gamma_{(0,0,0)}$ on $\mathbb{R}^{4m+4}$.

\subsection{$\spin(7)$}
\label{m=1}
In the case $m=1$, it is known that there exists $\spin(7)$ metrics on $M^8$ and $\mathbb{R}^8$\cite{cvetic_new_2004}. From \cite{hitchin_stable_2001}\cite{cvetic_new_2004}, we can write down the $\spin(7)$ condition.
\begin{equation}
\label{old spin(7)}
\begin{split}
&\frac{\dot{a}}{a}=\frac{1}{2}\frac{a}{b^2}-\frac{1}{2}\frac{a}{c^2}\\
&\frac{\dot{b}}{b}=\sqrt{2}\frac{1}{b}-\frac{\sqrt{2}}{2}\frac{b}{c^2}-\frac{1}{2}\frac{a}{b^2}\\
&\frac{\dot{c}}{c}=\frac{\sqrt{2}}{2}\frac{b}{c^2}+\frac{1}{4}\frac{a}{c^2}
\end{split}.
\end{equation}

Define
\begin{equation}
\begin{split}
F_1&=X_1-\frac{1}{2}Y_1Y_2+\frac{1}{2}Y_1Y_3\\
F_2&=X_2-\sqrt{2}Y_2+\frac{\sqrt{2}}{2}Y_3+\frac{1}{2}Y_1Y_2\\
F_3&=X_3-\frac{\sqrt{2}}{2}Y_3-\frac{1}{4}Y_1Y_3
\end{split}.
\end{equation}
The $\spin(7)$ condition \eqref{old spin(7)} is transformed to $F_i=0$ in the new coordinates. Define 
$$
\mathcal{B}^-_{\spin(7)}=\mathcal{B}_{RF}\cap \{F_1\equiv F_2\equiv F_3\equiv 0\}.
$$
We can check the following.
\begin{proposition}
\label{spin set}
The set $\mathcal{B}^-_{\spin(7)}$ is invariant.
\end{proposition}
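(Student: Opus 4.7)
I will verify the invariance by a direct computation in the same style as the proofs for $\mathcal{B}_{KE}$ and $\mathcal{B}_{QK}$ earlier in the section. Since $\mathcal{B}_{RF}$ is already known to be flow-invariant, it suffices to show that on the subset of $\mathcal{B}_{RF}$ where $F_1=F_2=F_3=0$, the quantities $\langle \nabla F_i, V_{\Lambda\leq 0}\rangle$ also vanish. The computation reduces to pure polynomial algebra because $W\equiv 0$ and $\Lambda W^2\equiv 0$ along any trajectory confined to $\mathcal{B}_{RF}$.

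The key structural observation, obtained by differentiating $F_i$ against \eqref{Polynomial Einstein Equation} (with $m=1$, $W=0$) and regrouping, is that for each $i$,
\[
F_i' = G\, F_i + P_i(X_1,X_2,X_3,Y_1,Y_2,Y_3),
\]
where $P_i$ is a polynomial without the overall factor $G$. For instance, a direct calculation yields
\[
P_1 = -X_1 + R_1 - \tfrac{1}{2}Y_1 Y_2 X_1 + Y_1 Y_2 X_2 + \tfrac{1}{2}Y_1 Y_3 X_1 - Y_1 Y_3 X_3,
\]
and analogous expressions appear for $P_2$ and $P_3$. Restricted to $\{F_1=F_2=F_3=0\}$, one has $F_i'=P_i$, so it remains only to show that each $P_i$ vanishes there.

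On $\mathcal{B}^-_{\spin(7)}$ the defining equations $F_i=0$ allow one to eliminate $X_1,X_2,X_3$ in favor of polynomials in $Y_1,Y_2,Y_3$. Substituting, each $P_i$ becomes a polynomial in $Y_1,Y_2,Y_3$ alone. The identity needed to complete the reduction is the linear constraint $X_1+2X_2+4X_3=1$ defining $\mathcal{H}$, which on $\mathcal{B}^-_{\spin(7)}$ simplifies to
\[
-\tfrac{1}{2}\,Y_1(Y_2-Y_3) + 2\sqrt{2}\,Y_2 + \sqrt{2}\,Y_3 = 1.
\]
Using this identity to eliminate $\tfrac{1}{4}Y_1(Y_2-Y_3)$ wherever it appears, the remaining $Y$-polynomials in $P_1,P_2,P_3$ collapse to zero.

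The main obstacle is not conceptual but rather the length of the bookkeeping; the three polynomial cancellations are tedious but mechanical. As an independent sanity check, one may invoke the classical fact that a closed Cayley $4$-form on an $8$-manifold forces Ricci-flatness, so any solution of the first-order system \eqref{old spin(7)} automatically solves the cohomogeneity one Ricci-flat Einstein equation \eqref{Old Einstein Equation}; translating back to the $(X,Y)$ coordinates via \eqref{new variable} gives an a priori reason for the flow to preserve the locus $F_1=F_2=F_3=0$ inside $\mathcal{B}_{RF}$.
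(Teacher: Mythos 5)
Your computational argument is essentially the same as the paper's: both proofs verify by direct polynomial algebra that $\langle\nabla F_i, V_{\Lambda\leq 0}\rangle$ vanishes on $\mathcal{B}^-_{\spin(7)}$, and both invoke the $\mathcal{H}$ constraint $X_1 + 2X_2 + 4X_3 = 1$ at the crucial step. The difference is purely organizational. The paper exhibits $\langle\nabla F_i, V_{\Lambda\leq 0}\rangle$ as an explicit combination $\sum_j h_{ij}F_j$ with polynomial coefficients (an identity valid on $\mathcal{H}$), so that vanishing is manifest once $F_j = 0$; you instead isolate the $GF_i$ term, eliminate the $X$-variables from the residual $P_i$, and then factor. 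Your factorization does close, e.g.
$$P_1 = \tfrac{1}{2}Y_1(Y_2-Y_3)\left(-1 - \tfrac{1}{2}Y_1(Y_2-Y_3) + 2\sqrt{2}\,Y_2 + \sqrt{2}\,Y_3\right),$$
which vanishes precisely because of the level-set constraint you identified. Either bookkeeping choice is fine; the paper's presentation makes the ideal membership visible, yours makes the role of the constraint polynomial visible. The genuinely different element is your closing remark: invoking the classical implication that a closed Cayley $4$-form forces Ricci-flatness, together with uniqueness for the (reduced) Ricci-flat ODE, to conclude a priori that the Ricci-flat flow must preserve the $\spin(7)$ locus. This works as a standalone proof — a solution of the first-order $\spin(7)$ system solves the Ricci-flat system and shares initial data (including first derivatives) with any Ricci-flat solution whose initial jet satisfies the $\spin(7)$ constraints, so uniqueness forces them to coincide — and it replaces polynomial bookkeeping with a geometric argument. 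The paper does not use this route, so it is worth keeping as an independent confirmation, though for the proposition as stated either argument alone suffices.
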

\begin{proof}
On $\mathcal{B}_{RF}$, we have
\begin{equation}
\begin{split}
&\langle\nabla F_1,V_{\leq 0}\rangle\\
&=F_1(G-1)-Y_1Y_2(F_1+2F_3)+Y_1Y_3(F_1+F_2+F_3)\\
&\langle\nabla F_2,V_{\leq 0}\rangle\\
&=F_2(G-1)-\sqrt{2}Y_2(F_1+F_2+4F_3)+\frac{\sqrt{2}}{2}Y_3(F_1+3F_2+2F_3)+Y_1Y_2(F_1+2F_3)\\
&\langle\nabla F_3,V_{\leq 0}\rangle\\
&=F_3(G-1)-\frac{\sqrt{2}}{2}Y_3(F_1+3F_2+2F_3)-\frac{1}{2}Y_1Y_3(F_1+F_2+F_3)
\end{split}.
\end{equation}
Computations show the above all vanish on $\mathcal{B}^-_{\spin(7)}$.
The proof is complete.
\end{proof}
Although the definition of $\mathbb{B}^-_{\spin(7)}$ consists of 6 equalities, one can show that $X_1+2X_2+4mX_3=1$ holds once all $F_i$'s and $1-G-R_s$ vanish. Therefore, $\mathbb{B}^-_{\spin(7)}$ is a $2$-dimensional surface and its projection to the $Y$-space is a level set given by
$$
1+\frac{1}{2}Y_1Y_2-\frac{1}{2}Y_1Y_3-2\sqrt{2}Y_2-\sqrt{2}Y_3=0.
$$
By changing the sign of $a$. we obtain the $\spin(7)$ condition with the opposite chirality. 
\begin{equation}
\begin{split}
H_1&=X_1+\frac{1}{2}Y_1Y_2-\frac{1}{2}Y_1Y_3\\
H_2&=X_2-\sqrt{2}Y_2+\frac{\sqrt{2}}{2}Y_3-\frac{1}{2}Y_1Y_2\\
H_3&=X_3-\frac{\sqrt{2}}{2}Y_3+\frac{1}{4}Y_1Y_3
\end{split}
\end{equation}
and 
$$
\mathcal{B}^+_{\spin(7)}=\mathcal{B}_{RF}\cap\{H_1\equiv H_2\equiv H_3\equiv 0\}.
$$
With the similar computation in the proof of Proposition \ref{spin set}, we can show that $\mathcal{B}^+_{\spin(7)}$ is invariant. Both invariant sets are presented in Figure \ref{spin(7)}. In our new coordinate, the $\spin(7)$ metric and the $G_2$ metric in \cite{bryant_construction_1989}\cite{gibbons_einstein_1990} are realized as straight line segments that lie in $\mathcal{B}^-_{\spin(7)}$.
\begin{figure}[h!]
\centering
\begin{subfigure}{.3\textwidth}
  \centering
  \includegraphics[width=1\linewidth]{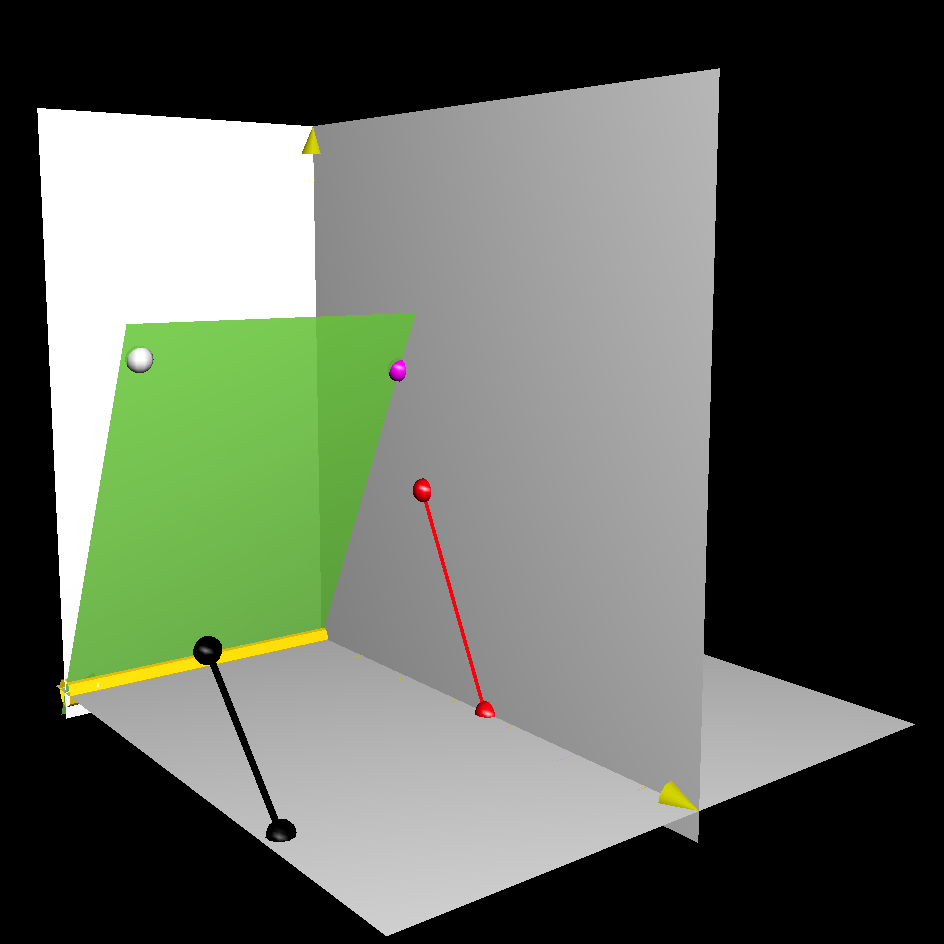}
  \caption{$\spin(7)$ and $G_2$ metrics}
  \label{fig:sfig2}
\end{subfigure}
\begin{subfigure}{.3\textwidth}
  \centering
  \includegraphics[width=1\linewidth]{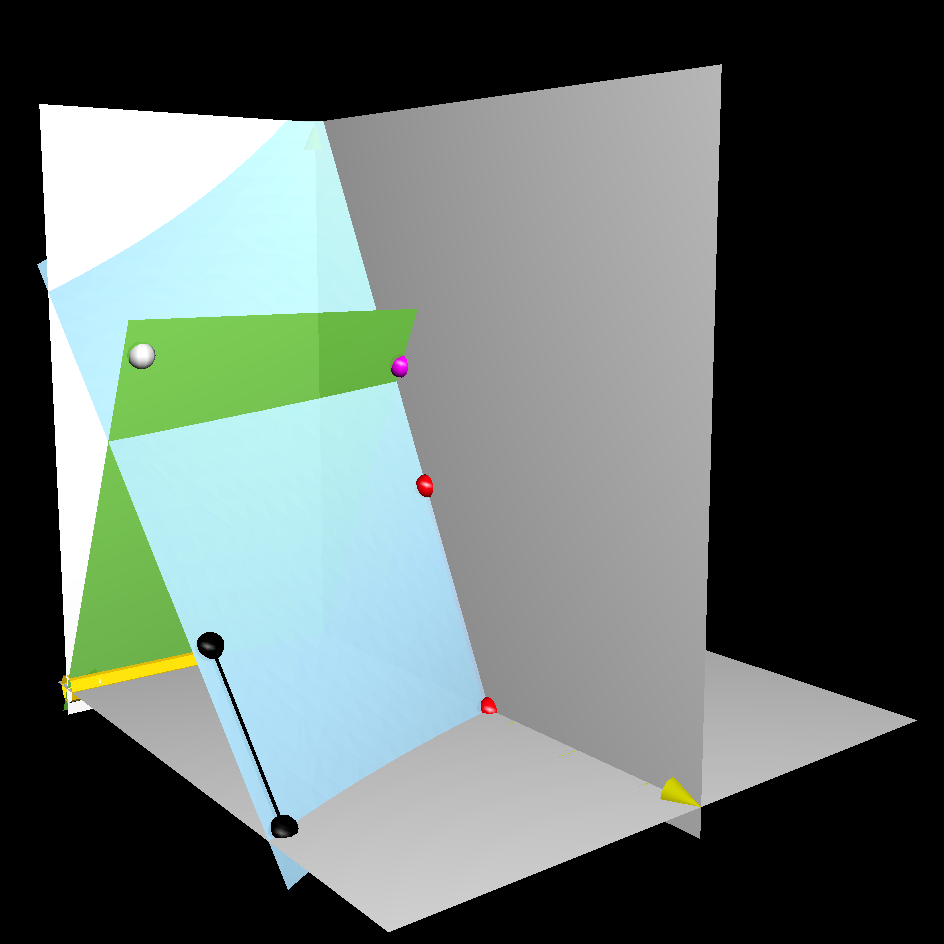}
  \caption{$\mathcal{B}^-_{\spin(7)}$}
  \label{fig:sfig2}
\end{subfigure}
\begin{subfigure}{.3\textwidth}
  \centering
  \includegraphics[width=1\linewidth]{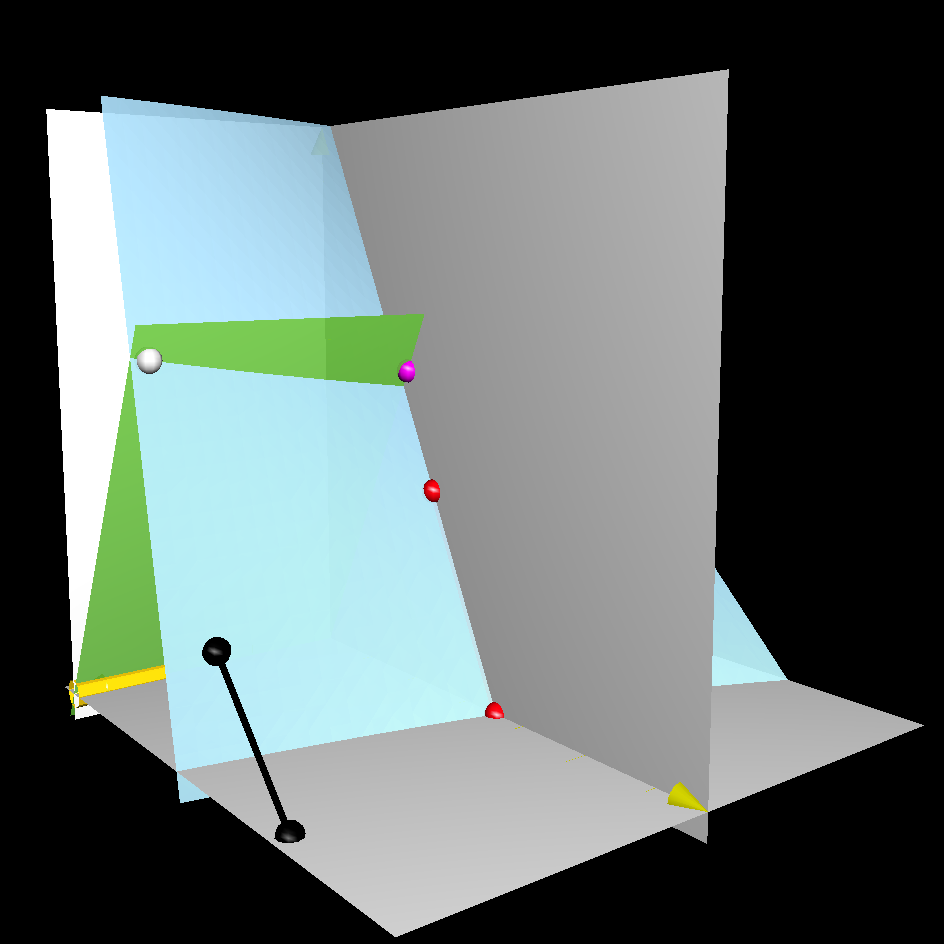}
  \caption{$\mathcal{B}^+_{\spin(7)}$}
  \label{fig:sfig2}
\end{subfigure}
\caption{Integral curves that represents $\spin(7)$ metrics (black) and $G_2$ metrics (red)}
\label{spin(7)}
\end{figure}

Linearization at $P_0$ shows that $\zeta_{(s_1,s_2,s_3)}$ lie in $\mathcal{B}^-_{\spin(7)}$ for all $(s_1,s_2,0)\in\mathbb{S}^2$ with $s_1>0$ and $s_2\geq 0$. $\zeta_{(1,0,0)}$ is the AC $\spin(7)$ metric found in \cite{bryant_construction_1989}\cite{gibbons_einstein_1990} and the 1-parameter family $\zeta_{(s_1,s_2,0)}$ with $s_2>0$ is the family of ALC $\spin(7)$ metrics found in \cite{cvetic_new_2004}. Specifically, for we obtain
$$
\zeta_{(s_1,s_2,0)}=\left\{\begin{array}{cc}
\mathbb{B}_8^+& 2s_1>s_2 \\
\mathbb{B}_8& 2s_1=s_2\\
\mathbb{B}_8^-&  2s_1<s_2
\end{array}\right.
$$
Another new $\spin(7)$ metric $\mathbb{A}_8$ was found on $\mathbb{R}^8$ in \cite{cvetic_new_2004}. This metric is locally the same as $\mathbb{B}_8$ although they differ globally. This property is reflected in our pictures as both metrics are lie in the 1-dimensional invariant set 
$$
\mathcal{B}^-_{\spin(7)}\cap\left\{\sqrt{2}Y_2-\sqrt{2}Y_3-Y_1Y_2=0\right\}.
$$
\begin{figure}[h!]
\centering
\begin{subfigure}{.3\textwidth}
  \centering
  \includegraphics[width=1\linewidth]{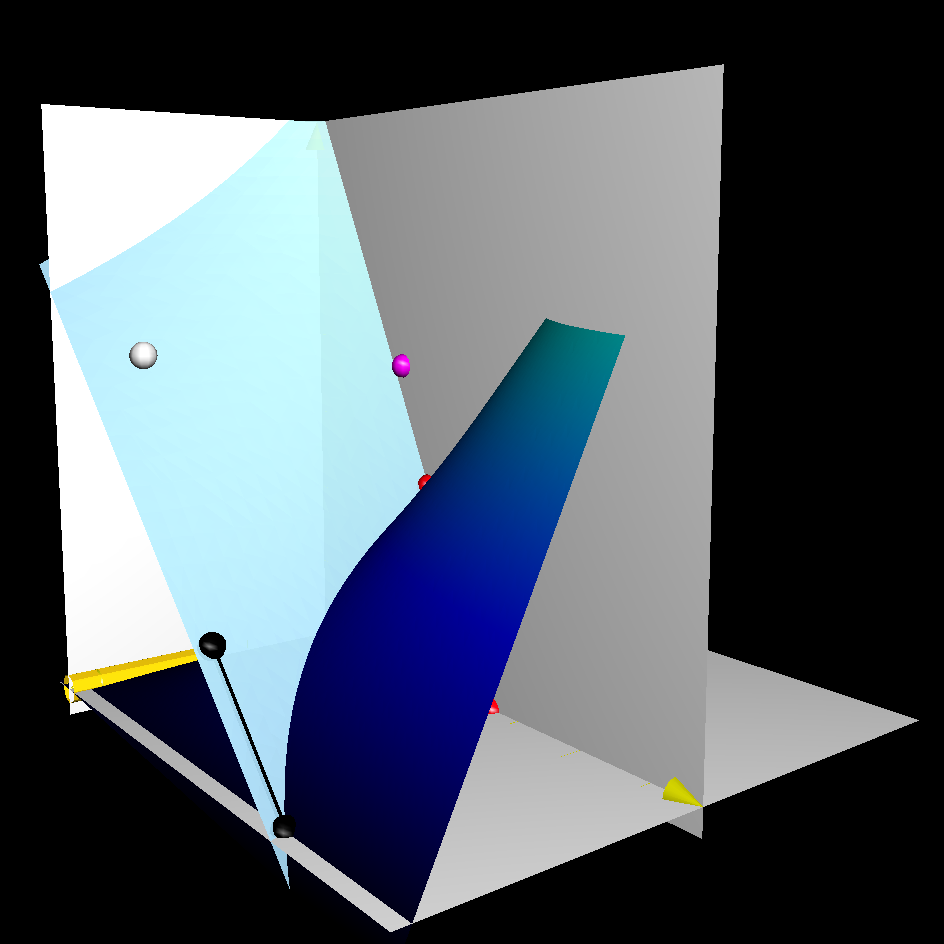}
  \caption{$\mathbb{B}_8$}
  \label{fig:sfig2}
\end{subfigure}
\begin{subfigure}{.3\textwidth}
  \centering
  \includegraphics[width=1\linewidth]{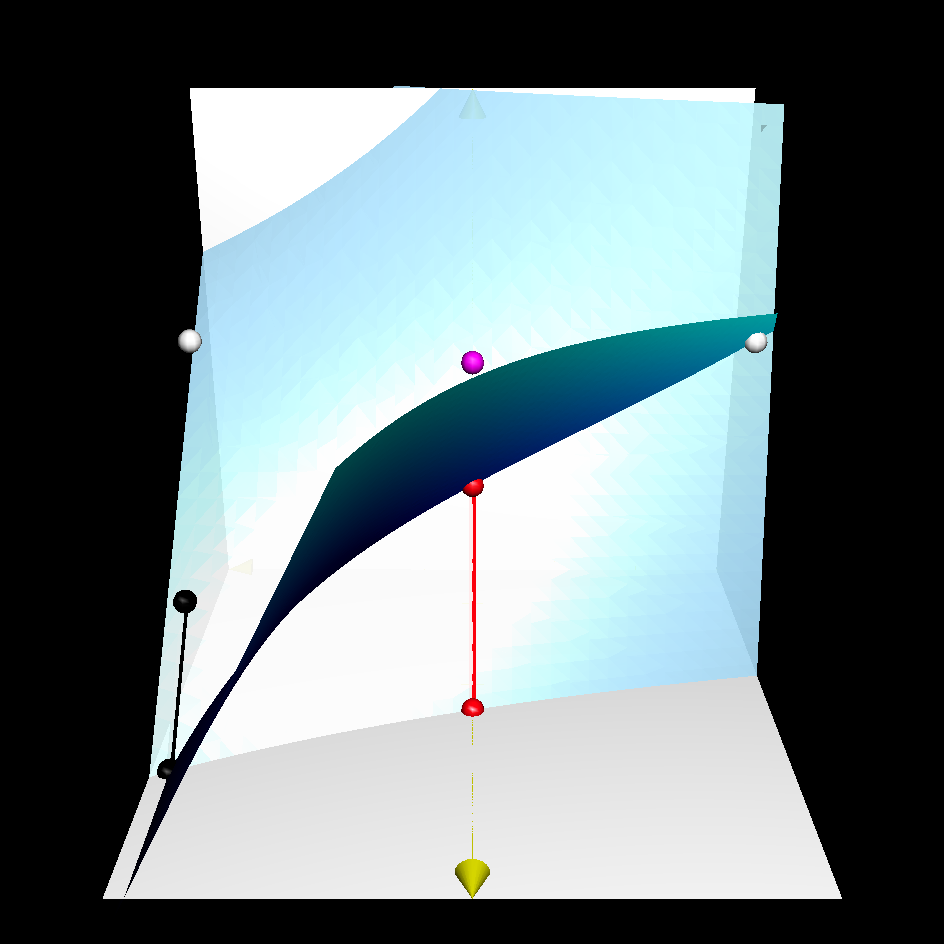}
  \caption{$\mathbb{B}_8$ and $\mathbb{A}_8$}
  \label{fig:sfig2}
\end{subfigure}
\begin{subfigure}{.3\textwidth}
  \centering
  \includegraphics[width=1\linewidth]{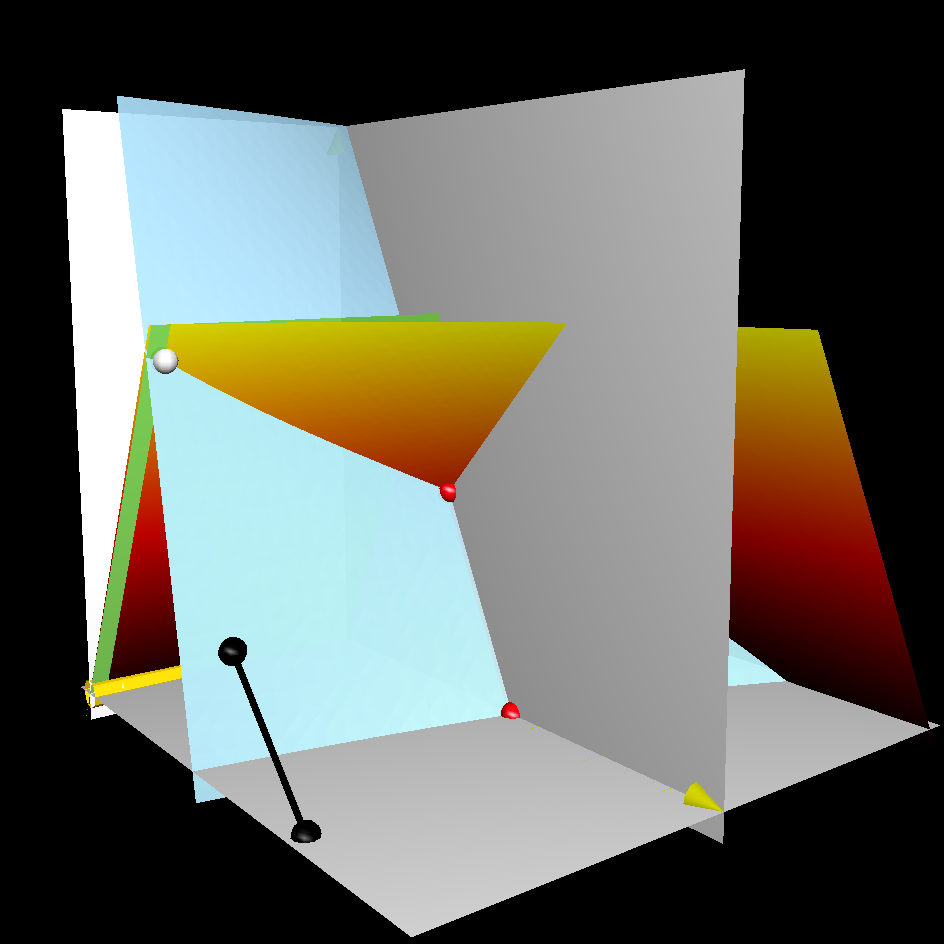}
  \caption{$\mathbb{A}_8$ with the opposite chirality}
  \label{fig:sfig2}
\end{subfigure}
\caption{$\mathbb{A}_8$ and $\mathbb{B}_8$}
\label{fig2}
\end{figure}
Simply change the sign of $Y_1$, then we can present $\mathbb{A}_8$ with the opposite chirality in the compact invariant set $\mathcal{S}$. It is realized by the integral curve $\gamma_{\left(\frac{1}{\sqrt{5}},\frac{2}{\sqrt{5}},0\right)}$. 

\begin{remark}
In \cite{cvetic_cohomogeneity_2002}, the sign change occurs in one of the $b$ component in order to obtain non-trivially different system since $\frac{a^2}{b^2}$ is not necessarily $2$. A 1-parameter family of $\spin(7)$ metric $\mathbb{C}_8$ was found in \cite{cvetic_cohomogeneity_2002}. They are metrics with Fubini--Study $\mathbb{CP}^{2m+1}$ bolt. At the infinity, one of the $b$ component tends to a constant while the other grow linearly as the same rate as $a$. Therefore, these metrics are not realized in this article as the 3-sphere $H/K$ is really controlled by three functions instead of two. However, if one further impose $2c^2=b^2$, then the metric is the Calabi--Yau metrics described in Section \ref{Calabi--Yau}.
\end{remark}

Recall in Section \ref{AC1 and AC2}, we know that there exists a unique unstable eigenvector of $\mathcal{L}(P_{AC-2})$ that is tangent to $\partial\mathcal{E}$ and $\Gamma_0$ emanates from $P_{AC-2}$ via this vector. Computation shows that this eigenvector is tangent to $\mathcal{B}^-_{\spin(7)}$. Hence $\Gamma_0$ is a singular $\spin(7)$ metric.

In general, we have the following Lemma.
\begin{lemma}
\label{generic holonomy}
Consider the case $m=1$. Metrics $\zeta_{(s_1,s_2,0)}$ and $\Gamma_0$ on $M^8$ and metrics $\gamma_{(s_1,s_2,0)}$ on $\mathbb{R}^{8}$ all have holonomy group no smaller than $\spin(7)$. In particular,
\begin{enumerate}
\item
Metrics $\zeta_{(s_1,s_2,0)}$ and $\Gamma_0$ on $M$ are $\spin(7)$.
\item
Metrics $\gamma_{\left(\frac{1}{\sqrt{5}},\frac{2}{\sqrt{5}},0\right)}$ on $\mathbb{R}^8$ is $\spin(7)$.
\item
Metrics $\gamma_{(s_1,s_2,0)}$ with
$(s_1,s_2,0)\neq \left(\frac{1}{\sqrt{5}},\frac{2}{\sqrt{5}},0\right)$ on $\mathbb{R}^8$ have generic holonomy.
\end{enumerate}
For the case $m>1$, metrics $\zeta_{(s_1,s_2,0)}$ with $s_2>0$ and $\Gamma_0$ on $M$ and metrics $\gamma_{(s_1,s_2,0)}$ with $s_2>0$ on $\mathbb{R}^{4m+4}$ have generic holonomy.
\end{lemma}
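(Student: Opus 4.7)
The argument splits by dimension, since $\spin(7)$ is a possible irreducible Ricci-flat holonomy only when $4m+4=8$. The plan is to invoke the Berger classification: for $m>1$, a Ricci-flat irreducible holonomy must be one of $\mathrm{SO}(4m+4)$, $\mathrm{SU}(2m+2)$, $\mathrm{Sp}(m+1)$; for $m=1$, $\spin(7)$ is also allowed. Since $\mathrm{Sp}(m+1)\subset\mathrm{SU}(2m+2)$, ruling out the K\"ahler reduction eliminates both $\mathrm{SU}$ and $\mathrm{Sp}$; because the $G$-action is by isometries, any parallel complex structure is forced to be $G$-invariant, so only the $G$-invariant K\"ahler locus $\mathcal{B}_{KE}$ needs to be ruled out.

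The first step is to show that none of the relevant integral curves lies in either $\mathcal{B}_{KE}$ or $\mathcal{B}_{QK}$. The $\mathcal{B}_{KE}$ exclusion is already observed in Section~\ref{Calabi--Yau}: none of the $\omega$-limits $P_{AC-2}$, $P_{ALC-1}$, $P_{ALC-2}$ of the relevant Ricci-flat curves lies in $\mathcal{B}_{KE}$, so a curve trapped in the flow-invariant $\mathcal{B}_{KE}$ is impossible. For $\mathcal{B}_{QK}$, the Swann integral curve exits the octant $(s_1,s_2,s_3)\ge 0$ we consider; alternatively, $\mathcal{B}_{QK}$ forces $Y_3\ge 2Y_2$, which is incompatible with the compact invariant set $\mathcal{S}$ of Section~\ref{Compact invariant set} where $2Y_2\ge Y_3$. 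This settles the $m>1$ portion of the lemma, yielding $\mathrm{Hol}=\mathrm{SO}(4m+4)$, and simultaneously establishes $\mathrm{Hol}\supseteq\spin(7)$ in the $m=1$ case.

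For the $m=1$ specifications, I would collect the containments already verified in Section~\ref{m=1}: the unstable eigenvectors at $\mathcal{L}(P_0)$ lie in the tangent space of $\mathcal{B}^-_{\spin(7)}$, so every $\zeta_{(s_1,s_2,0)}$ stays in $\mathcal{B}^-_{\spin(7)}$, and the unique eigenvector of $\mathcal{L}(P_{AC-2})$ tangent to $\partial\mathcal{E}$ is tangent to $\mathcal{B}^-_{\spin(7)}$, so $\Gamma_0$ does as well; similarly $\gamma_{(1/\sqrt{5},2/\sqrt{5},0)}$ is the $\mathbb{A}_8$ metric in the chirality-reverse of $\mathcal{B}^+_{\spin(7)}$. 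Combining this containment with the first step gives $\mathrm{Hol}=\spin(7)$ for each. For the remaining $\gamma_{(s_1,s_2,0)}$ with $(s_1,s_2)\neq(1/\sqrt{5},2/\sqrt{5})$, I would show that tangency of $s_1v_1+s_2v_2$ at $P_{AC-1}$ to $\mathcal{B}^+_{\spin(7)}$ or $\mathcal{B}^-_{\spin(7)}$ picks out at most the single direction $(1:2)$ already accounted for, by explicit evaluation of the differentials $dH_i$ and $dF_i$ on $v_1,v_2$. Any other $(s_1,s_2)$ then gives a curve avoiding $\mathcal{B}^\pm_{\spin(7)}$, so by the first step its holonomy is all of $\mathrm{SO}(8)$.

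The main obstacle I foresee is this last tangency computation at $P_{AC-1}$. Since $P_{AC-1}$ sits in both $\mathcal{B}^+_{\spin(7)}$ and $\mathcal{B}^-_{\spin(7)}$, the unstable manifold of $P_{AC-1}$ a priori could meet each invariant set in a one-dimensional leaf, and I must confirm that exactly one such leaf lies in the open octant of parameters we consider, namely the one corresponding to $\mathbb{A}_8$. Conceptually this is routine linear algebra with the eigenvectors from Section~\ref{AC1 and AC2}, but the bookkeeping between the two chiralities and the six defining polynomials $F_i, H_i$ requires care.
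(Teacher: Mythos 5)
Your route to ruling out K\"ahler (and hence $SU$ and $Sp$) holonomy is genuinely different from the paper's, and the difference matters. The paper does not argue via the flow-invariant sets $\mathcal{B}_{KE}$ and $\mathcal{B}_{QK}$ at all; instead it passes the hypothetical parallel complex structure to the \emph{asymptotic limit space} (the metric cone over the Jensen sphere, the $\mathbb{S}^1$-bundle over the $G_2$-cone, or the $\mathbb{S}^1$-bundle over the cone over Fubini--Study $\mathbb{CP}^{2m+1}$), where the holonomy can be read off explicitly, and then uses a pure group-theoretic containment argument: $\spin(7)$, $SO(7)$, $G_2$ are not contained in $SU(4)$ (by dimension count and simple-connectedness of $SU(4)$), and $SO(4m+3)\not\subseteq SU(2m+2)$ for $m>1$. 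This sidesteps any need to discuss whether the complex structure is $G$-invariant, and it rules out \emph{all} K\"ahler structures simultaneously.

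Your approach carries a gap that you gloss over: the sentence ``because the $G$-action is by isometries, any parallel complex structure is forced to be $G$-invariant, so only the $G$-invariant K\"ahler locus $\mathcal{B}_{KE}$ needs to be ruled out'' is doing all the work and is not justified. Two separate things must be checked. First, $G$-invariance of a parallel $J$ is not automatic from $G$ acting by isometries; it follows only after one observes that when the holonomy is exactly $SU(n)$ with $n\ge 3$ the parallel complex structure is unique up to sign, so the continuous character $G\to\{\pm 1\}$ is trivial by connectedness of $G$; the $Sp$ case requires a separate twistor-sphere fixed-point argument (the connected group $G$ acting on the $\mathbb{S}^2$ of parallel complex structures through $Sp(1)\cong SO(3)$ need not fix a point a priori, so one must argue that it does, or handle the full $2$-sphere of structures). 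Second, even granting $G$-invariance, you must show that the equations \eqref{Kahler condition} defining $\mathcal{B}_{KE}$ exhaust \emph{all} $G$-invariant K\"ahler structures, not only the particular $\mathcal{I}$ of \cite{dancer_kahler-einstein_1998}; since the isotropy representation has a one-dimensional trivial summand pairing with $\partial_t$ and two non-trivial inequivalent summands, there are several sign choices, and one would need to verify that each yields the same invariant locus up to the involutions already present in the system. The same invariance issue recurs (and must be addressed) in your final step, where you conclude generic holonomy for a curve that avoids $\mathcal{B}^{\pm}_{\spin(7)}$: this again uses that a parallel Cayley $4$-form is forced to be $G$-invariant, hence that the curve would lie in $\mathcal{B}^{\pm}_{\spin(7)}$. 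None of these points are fatal, but they must all be filled in; the paper's limit-space argument avoids every one of them, which is why it is the cleaner route.

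For the remaining pieces — the $\zeta_{(s_1,s_2,0)}$, $\Gamma_0$, and $\gamma_{(1/\sqrt{5},2/\sqrt{5},0)}$ being $\spin(7)$ by membership in $\mathcal{B}^{\pm}_{\spin(7)}$, and the tangency bookkeeping at $P_{AC-1}$ — your plan agrees with what the paper does, and the linear-algebra check on the eigenvectors $v_1,v_2$ against $dF_i, dH_i$ is indeed how one isolates the $\mathbb{A}_8$ direction.
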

\begin{proof}
Consider the case $m=1$. By the discussion above, it is clear that metrics $\zeta_{(s_1,s_2,0)}$ and $\Gamma_0$ on $M^8$, metrics $\gamma_{\left(\frac{1}{\sqrt{5}},\frac{2}{\sqrt{5}},0\right)}$ on $\mathbb{R}^8$ are $\spin(7)$. It suffices to prove $\gamma_{(s_1,s_2,0)}$ with
$(s_1,s_2,0)\neq \left(\frac{1}{\sqrt{5}},\frac{2}{\sqrt{5}},0\right)$ on $\mathbb{R}^8$ have generic holonomy. By Lemma \ref{Asym for RF}, we know that 
$$\lim\limits_{\eta\to\infty} \gamma_{(s_1,s_2,0)}=\left\{\begin{array}{ll}
P_{AC-2}& s_2=0\\
P_{ALC-2}& s_1,s_2>0\\
P_{ALC-1}& s_1=0\\
\end{array}\right..
$$
Hence the limit space is one of the following.
\begin{enumerate}
\item
The metric cone over Jensen 7-sphere, its holonomy is $\spin(7)$. 
\item
An $\mathbb{S}^1$-bundle over the metric cone over a nearly K\"ahler $\mathbb{CP}^3$, whose holonomy group contains a subgroup $G_2$.
\item
An $\mathbb{S}^1$-bundle over the metric cone over a Fubini--Study $\mathbb{CP}^3$. The holonomy group contains a subgroup $SO(7)$. 
\end{enumerate}
Suppose the metric $\gamma_{(s_1,s_2,0)}$ admits a K\"ahler structure. By passing the K\"ahler structure to the limit space, we learn that the holonomy group of the limit space must be contained in $SU(4)$.

Note that $SU(4)$ is $15$-dimensional and simply connected. Both $\spin(7)$ and $SO(7)$ have dimension larger than 15, hence they are not contained in $SU(4)$. If the holonomy group that contains $G_2$ were also contained in $SU(4)$, then it must be $SU(4)$ itself. But if $G_2$ were contained in $SU(4)$, then $SU(4)/G_2$ must be a circle, a contradiction to the fact that $SU(4)$ is simply connected. We conclude that $G_2$ is not contained in $SU(4)$. Therefore, $\gamma_{(s_1,s_2,0)}$ with
$(s_1,s_2,0)\neq \left(\frac{1}{\sqrt{5}},\frac{2}{\sqrt{5}},0\right)$ on $\mathbb{R}^8$ must have generic holonomy.

Consider the case $m>1$. With $s_2>0$ and Lemma \ref{Asym for RF}, we have 
$$\lim\limits_{\eta\to\infty} \gamma_{(s_1,s_2,0)}=\left\{\begin{array}{ll}
P_{ALC-2}& s_1>0\\
P_{ALC-1}& s_1=0\\
\end{array}\right.,\quad \lim\limits_{\eta\to\infty} \zeta_{(s_1,s_2,0)}=\lim\limits_{\eta\to\infty} \Gamma_0=P_{ALC-2}.
$$
Then the limit space must have holonomy group that contains a subgroup $SO(4m+3)$. Since the dimension of $SO(4m+3)$ is larger than the one of $SU(2m+2)$ if $m\geq 1$. We conclude that the Ricci-flat metrics above have generic holonomy.
\end{proof}

By Lemma \ref{generic holonomy} and by Theorem 2.1 in \cite{hitchin_harmonic_1974} and \cite{wang_parallel_1989}, Theorem \ref{generic} is proven.
\\
\\
\textbf{Acknowledgements.} The author is grateful to McKenzie Wang for introducing the problem and his useful comment. The author would like to thank Cheng Yang for helpful discussions on dynamic system. The author would also like to thank Christoph B\"ohm and Lorenzo Foscolo for their helpful suggestions and remarks on this project. Lemma \ref{generic holonomy} is proven thanks to the inspiring discussion with Lorenzo Foscolo.

\bibliography{Bibliography}
\bibliographystyle{alpha}
\end{document}